\newcommand{\gris}[1]{\textcolor{darkslategray}{#1}}
 \definecolor{darkslategray}{rgb}{0.18, 0.31, 0.31}
\definecolor{ganador}{HTML}{D8345F}
\theoremstyle{theorem}
\newtheorem{theorem}{Theorem}[section]
\newtheorem{lemma}[theorem]{Lemma}
\newtheorem{proposition}[theorem]{Proposition}
\theoremstyle{definition}
\newtheorem{definition}[theorem]{Definition}
\newtheorem{example}[theorem]{Example}
\newtheorem{remark}[theorem]{Remark}
\newtheorem*{rep@theorem}{\rep@title}
\newcommand{\newreptheorem}[2]{%
\newenvironment{rep#1}[1]{%
 \def\rep@title{#2 \ref{##1}}%
 \begin{rep@theorem}}%
 {\end{rep@theorem}}}
\newcommand{\C}{{\mathbb{C}}}
\newcommand{\Z}{{\mathbb{Z}}}
\newcommand{\R}{{\mathbb{R}}}
\newcommand{\E}{\mathcal{E}}
\newcommand{\I}{\mathcal{I}}
\newcommand{\J}{\mathcal{J}}
\renewcommand{\L}{\mathcal{L}}
\newcommand{\cH}{\mathcal{H}}
\newcommand{\bi}{\bar{i}}
\newcommand{\bj}{\bar{j}}
\newcommand{\conv}{\mathsf{conv}}
\renewcommand{\int}{\mathsf{int}}
\renewcommand{\deg}{\mathsf{deg}}
\renewcommand{\dim}{\mathsf{dim}}
\newcommand{\mv}{\mathsf{MV}}
\newcommand{\vol}{\mathsf{Vol}}
\newcommand{\e}{{\mathsf{e}}}
\newcommand{\f}{{\mathsf{f}}}
\newcommand{\B}{{\mathsf{B}}}
\newcommand{\M}{{\mathsf{M}}}
\newcommand{\N}{{\mathsf{N}}}
\newcommand{\T}{{\mathsf{T}}}
\newcommand{\nbr}{\mathsf{nbr}}
\newcommand{\NN}{{\mathcal{N}}}
\newcommand\numberthis{\addtocounter{equation}{1}\tag{\theequation}}
\begin{document}

\title{The harmonic polytope}

\author{\textsf{Federico Ardila\footnote{\textsf{San Francisco State University, Universdidad de Los Andes; federico@sfsu.edu. Partially supported by NSF grant DMS-1855610 and Simons Fellowship 613384.}}\quad and\quad Laura Escobar\footnote{\textsf{Washington University in St. Louis; laurae@wustl.edu. Partially supported by NSF grant DMS-1855598. }}}}

\date{}

\maketitle

%
%
\begin{abstract} 
We study the harmonic polytope, which arose in Ardila, Denham, and Huh's work on the Lagrangian geometry of matroids. We describe its combinatorial structure, showing that it is a $(2n-2)$-dimensional polytope with $(n!)^2(1+\frac12+\cdots+\frac1n)$ vertices and $3^n-3$ facets. We also give a formula for its volume: it is a weighted sum of the degrees of the projective varieties of all the toric ideals of connected bipartite graphs with $n$ edges; or equivalently, a weighted sum of the lattice point counts of all the corresponding trimmed generalized permutahedra. 
\end{abstract}
%
%
%
%

%

\section{Introduction}

Motivated by the Lagrangian geometry of conormal varieties in classical algebraic geometry, 
Ardila, Denham, and Huh \cite{ADH1} introduced the \emph{conormal fan} $\Sigma_{\M, \M^\perp}$ of a matroid $\M$ -- a Lagrangian analog of the better known Bergman fan $\Sigma_\M$ \cite{AK}. 
They used the conormal fan $\Sigma_{\M, \M^\perp}$ to give new geometric interpretations of the Chern-Schartz-MacPherson cycle of a matroid --  introduced by L\'opez de Medrano, Rinc\'on, and Shaw in \cite{LRS} -- and of the $h$-vectors of the broken circuit complex $BC(\M)$ and independence complex $I(\M)$ of $\M$. This geometric framework allowed them to prove that these vectors are log-concave,  as conjectured by Brylawski and Dawson \cite{Brylawski, Dawson} in the 1980s. 

In their work, 
Ardila, Denham, and Huh encountered two polytopes associated to a positive integer $n$: the \emph{harmonic polytope} $H_{n,n}$ and the \emph{bipermutohedron} $\Pi_{n,n}$; the first is a Minkowski summand of the second. Their geometric origin is explained in Section \ref{sec:matroids}.
This paper studies the harmonic polytope $H_{n,n}$; its name derives from the fact that its number of vertices is $(n!)^2H_n$ where $H_n = 1 + \frac12 + \cdots + \frac1n$ is the $n$th harmonic sum.
The harmonic polytope has nice vertex and inequality descriptions, shown in 
Propositions \ref{prop:vertices} and \ref{prop:facets}. We also give a combinatorial formula for its $f$-vector; we note that giving such a description for an arbitrary polytope is $\#P$-hard. \cite{Dyer, Linial}

Computing the volume of an arbitrary polytope is a very difficult task \cite{difficult-volume}. In principle, one could compute the volume of a given polytope by constructing a triangulation and adding the volumes of each of the maximal dimensional simplices. 
In practice, 
this is not a feasible approach: 
Dyer and Frieze showed  that the problem of finding the volume of a polytope is $\#$P-hard  \cite{Dyer-Frieze}. Even among polytopes with well-understood face structures, few exact volume formulas are known.

Our main result is Theorem~\ref{thm:volume}, which computes the volume of $H_{n,n}$.
We use the combinatorial structure of the harmonic polytope $H_{n,n}$ to show that its volume 
is a weighted sum of the degrees of the toric ideals of all bipartite multigraphs on $n$ edges; or equivalently, of the lattice point counts of all the corresponding trimmed generalized permutahedra.

\begin{theorem} \label{thm:volume}
The normalized volume of the harmonic polytope is
\begin{eqnarray*}
\vol(H_{n,n}) 
&=& \sum_{\Gamma} \frac{i(P_\Gamma^-)}{(v(\Gamma)-2)!} \prod_{v\in V(\Gamma)}\deg(v)^{\deg(v)-2}  \\
&=& \sum_{\Gamma} \frac{\deg(X_\Gamma)}{(v(\Gamma)-2)!} \prod_{v\in V(\Gamma)}\deg(v)^{\deg(v)-2}, 
\end{eqnarray*}
summing over all connected bipartite multigraphs $\Gamma$ on edge set $[n]$. 
Here $i(P_\Gamma^-)$ is the number of lattice points in the trimmed generalized permutahedron $P_\Gamma^-$ of $\Gamma$, $X_\Gamma$ is the projective embedding of the toric variety of $\Gamma$
given by the toric ideal of $\Gamma$, $V(\Gamma)$ is the set of vertices of $\Gamma$, and $v(\Gamma) := |V(\Gamma)|$. 
\end{theorem}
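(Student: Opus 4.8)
The plan is to compute the volume of $H_{n,n}$ by a polyhedral subdivision whose pieces are controlled by connected bipartite multigraphs. First I would recall from Proposition~\ref{prop:vertices} the explicit vertex description of $H_{n,n}$ and from Proposition~\ref{prop:facets} the $3^n-3$ facet-defining inequalities; the facets come in three families indexed by proper nonempty subsets $S \subsetneq [n]$ (together with a fourth, degenerate, "sum" hyperplane cutting out the affine span). The key geometric observation is that $H_{n,n}$ lies in the product of two standard simplices (or a dilate thereof), and that the natural map recording, for each point, which coordinates are ``active'' on each side induces a subdivision of $H_{n,n}$ into cells $C_\Gamma$ indexed by connected bipartite multigraphs $\Gamma$ on edge set $[n]$: the two colour classes of vertices of $\Gamma$ track the support patterns on the two factors, and an edge $e \in [n]$ joins the block containing $e$ on the left to the block containing $e$ on the right.

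Next I would identify each full-dimensional cell $C_\Gamma$ as (a product of simplices times) the trimmed generalized permutahedron $P_\Gamma^-$, so that its normalized volume is $i(P_\Gamma^-)$ up to the factor $1/(v(\Gamma)-2)!$ coming from the dimension $v(\Gamma)-2$ of that permutahedron; the extra multiplicative constant $\prod_{v \in V(\Gamma)} \deg(v)^{\deg(v)-2}$ should come from a local coordinate change on each cell, where a vertex $v$ of degree $\deg(v)$ contributes a ``star'' factor whose lattice volume is $\deg(v)^{\deg(v)-2}$ — the classical spanning-tree count of a complete graph $K_{\deg(v)}$, i.e.\ a Cayley-type factor arising because the local cone at $v$ is the cone over a product of simplices subdivided by its own permutahedral structure. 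Summing $\vol(C_\Gamma)$ over all connected bipartite multigraphs $\Gamma$ on $[n]$ then yields the first equality. The second equality is then immediate: $i(P_\Gamma^-) = \deg(X_\Gamma)$ because the trimmed generalized permutahedron $P_\Gamma^-$ is (a translate of) the Newton polytope / state polytope whose normalized volume — equivalently its number of lattice points in the appropriate normalization, since it is a generalized permutahedron with the relevant unimodularity — computes the degree of the projective toric variety $X_\Gamma$ cut out by the toric ideal $I_\Gamma$ of the bipartite graph $\Gamma$; this is a standard fact about toric ideals of graphs.

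I expect the main obstacle to be pinning down the combinatorial bijection between the full-dimensional cells of the subdivision and the connected bipartite multigraphs, and in particular verifying that the subdivision is honest — that the cells $C_\Gamma$ tile $H_{n,n}$ without overlap and with the right dimensions, so that lower-dimensional graphs (disconnected ones, or ones with too few vertices) contribute nothing. The connectedness requirement is exactly what forces $v(\Gamma) - 2 \le$ the ambient dimension count to work out, and a careful case analysis of how the three families of facets interact with the support-pattern map will be needed here. A secondary, more technical obstacle is the normalization bookkeeping: keeping track of the precise lattice in which each cell sits, so that the ``obvious'' volume of a cell really equals $i(P_\Gamma^-)/(v(\Gamma)-2)!$ times the product of the Cayley factors, rather than that quantity times some stray power of $n$ or $2$. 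I would handle this by fixing once and for all the affine lattice on the hyperplane spanned by $H_{n,n}$, computing the volume of one representative small cell by hand to calibrate constants, and then arguing that all cells are related to it by lattice-preserving maps built from permutations of $[n]$ and the block structure of $\Gamma$.
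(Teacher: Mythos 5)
Your plan hinges on a polyhedral subdivision of $H_{n,n}$ into cells $C_\Gamma$ indexed by connected bipartite multigraphs, but that subdivision is never constructed, and as described it does not exist in the form you state: points of $H_{n,n}$ have no vanishing coordinates (the coordinate sums are fixed at $\tfrac{n(n+1)}2+1$), so the ``active support on each side'' map is not defined, and no argument is given that the putative cells tile $H_{n,n}$, have dimension $2n-2$, or have the asserted product structure. The actual mechanism in the paper is not a geometric subdivision at all: one writes $H_{n,n}$ as the Minkowski sum \eqref{eq:Minkowski} of segments $\Delta_{ij},\Delta_{\bi\bj}$ and the simplex $D_n$, expands the volume into mixed volumes as in \eqref{eq: volume expansion}, kills the terms where $G$ or $G'$ has a cycle (\Cref{lem: forests are needed}), and evaluates the surviving mixed volumes via the BKK theorem by reducing the polynomial system $\E(G,G')$ to a generic system with Newton polytope the edge polytope $R_\Gamma$ (\Cref{key lemma}). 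If you want a genuinely subdivision-based proof you would need a mixed-subdivision/Cayley-trick argument identifying the mixed cells and their volumes, which is essentially the same work you have deferred. Relatedly, the factor $\prod_{v\in V(\Gamma)}\deg(v)^{\deg(v)-2}$ is misattributed in your sketch: it is not the lattice volume of a local ``star'' cone, but the multiplicity with which a given $\Gamma$ arises, namely the number of pairs of forests $(G,G')$ whose component partitions produce $\Gamma$, counted block by block with Cayley's formula ($|I|^{|I|-2}$ trees on each block $I$, and $\deg(I)=|I|$ in $\Gamma$). Your proposed local-coordinate-change justification is not substantiated and would have to be proved from scratch.

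The second equality is also not ``immediate.'' The identity $i(P_\Gamma^-)=\deg(X_\Gamma)$ is obtained in the paper by chaining two nontrivial facts: Sturmfels' theorem that $\deg(X_\Gamma)$ equals the normalized volume of the edge polytope $R_\Gamma$ \cite{Sturmfels} -- which requires knowing that $R_\Gamma$ is normal and that its only lattice points are its vertices (Lemma \ref{lem:latticepts} and Proposition \ref{lem:normal}), so that the toric ideal $I_\Gamma$ really does cut out an embedding of $X_{R_\Gamma}$ -- together with Postnikov's theorem that $(p+q-2)!\,\vol(R_\Gamma)=i(P_\Gamma^-)=i(Q_\Gamma^-)$ for connected $\Gamma$ \cite[Theorem 12.2]{Postnikov}. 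Your appeal to ``the relevant unimodularity'' conflates the volume of $R_\Gamma$ with the lattice-point count of the different polytope $P_\Gamma^-$, and skips both ingredients. Finally, the vanishing statements you flag as the main obstacle (that cyclic $G,G'$ and disconnected $\Gamma$ contribute nothing) are indeed essential and are left unproved in your plan; in the paper they follow from the genericity argument in \Cref{lem: forests are needed} and the dimension drop of $R_\Gamma$ and $P_\Gamma$ for disconnected $\Gamma$ in \Cref{key lemma}.
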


To prove Theorem \ref{thm:volume} we observe that the harmonic polytope can be expressed as a Minkowski sum of simplices, so its volume is a sum of the associated mixed volumes. Following an idea of Postnikov \cite{Postnikov}, each mixed volume equals the number of isolated solutions of a system of polynomial equations by Bernstein-Khovanskii-Kushnirenko's theorem, and we can try to count those solutions. In Postnikov's case this is easy because one obtains a system of linear equations, which has 0 or 1 solution.
Our setting is much more subtle because our equations are not linear. To count their common solutions, we establish a connection with the theory of \emph{toric edge ideals} \cite{SVV,Villarreal}. This connection allows us to express each mixed volume in terms of the degree of a toric ideal, the volume of an \emph{edge polytope}, or the number of lattice points of a \emph{trimmed generalized permutahedron}.

In order to get an approximation for the volume of the harmonic polytope, it is desirable to count the non-zero terms in the sum of Theorem \ref{thm:volume}.  
We show that the non-zero mixed volumes are in bijection with the pairs of forests on $[n]$ whose union is connected. We count them in Proposition \ref{prop:summands} by computing in the M\"obius algebra of the partition lattice.

\section{Motivation: the Lagrangian geometry of matroids} \label{sec:matroids}

This section, which is logically independent from the rest of the paper, provides the geometric motivation for this project; it assumes some familiarity with the geometry of matroids. Our discussion overlaps with \cite[Section 2.8]{ADH1}; for further details we refer the reader to \cite{AHK, A, ADH1}. 

The harmonic polytope and the bipermutahedron arose naturally in Ardila, Denham, and Huh's construction \cite{ADH1} of the \emph{conormal fan of a matroid}. They used the bipermutahedron to provide a combinatorial model for the Lagrangian geometry of matroids, and derive interesting combinatorial consequences. Our goal in this section is to explain that the harmonic polytope is the universal polytope that is contained in all such models.

\subsection{Combinatorial Hodge theory and log-concavity for matroids}

\noindent 
\textsf{\textbf{The Chow ring, the Bergman fan, and $f$-vectors.}}
The story begins with the proof 
by Huh \cite{Huh}, Huh-Katz \cite{HK},  and Adiprasito-Huh-Katz \cite{AHK} 
of a series of conjectures by Rota, Heron, Mason, and Welsh in the 1970s and 1980s. Their strongest result is 
that the $f$-vector $f_0, f_1 \ldots, f_{r-1}$ of the broken circuit complex of a matroid $M$ is log-concave. 

When $M$ is realizable as a hyperplane arrangement over the complex numbers, De Concini and Procesi's \emph{wonderful compactification} of the arrangement complement is a smooth complex projective variety, whose Chow ring $A^*(M)$ satisfies the \emph{K\"ahler package}. 
Feichtner and Yuzvinsky \cite{FY} gave an elegant combinatorial presentation for this Chow ring. There are natural classes $\alpha$ and $\beta$ in $A^*(M)$ whose intersection numbers deg$(\alpha^k \beta^{r-1-k})$ equal the $f$-vector above for $k=0, 1 \ldots, r-1$. The Hodge-Riemann relations then imply the desired log-concavity result.

When $M$ is not realizable, there seems to be no algebro-geometric context for this proof, but there is a tropical substitute: Ardila and Klivans's \emph{Bergman fan} $\Sigma_M$, which is a triangulation of the tropical linear space Trop$(M)$ of $M$. 
Its Chow ring $A^*(M)$ coincides with the Chow ring above in the realizable case. 
The approach above can be ``tropicalized" to include all matroids, but there are significant new hurdles to overcome. The main technical result of Adiprasito--Huh--Katz \cite{AHK} is that this combinatorial Chow ring $A^*(M)$ still satisfies the K\"ahler package for all $M$, even in the absence of algebraic geometry.  The main combinatorial result is that the intersection numbers deg$(\alpha^k \beta^{r-1-k})$ still equal the desired $f$-vector; this is an algebraic combinatorial computation in terms of the flags of flats of $M$, which correspond to the cones of the Bergman fan $\Sigma_M$.

\medskip

\noindent \textsf{\textbf{The conormal Chow ring, the conormal fan, and $h$-vectors.}} Huh \cite{Huh2} and Ardila--Denham--Huh \cite{ADH1} recently proved stronger conjectures from the 1980s by Brylawski and Dawson. The strongest is that the $h$-vector $h_0, h_1 \ldots, h_r$ of the broken circuit complex of a matroid $M$ is log-concave. 

When $M$ is representable over a field of characteristic zero, Huh identified two classes $\gamma$ and $\delta$ in the Chow ring of Varchenko's \emph{variety of critical points} and proved that the intersection numbers deg$(\gamma^k \delta^{n-2-k})$ now give the desired $h$-vector, which is log-concave by the Hodge-Riemann relations.

When $M$ is not realizable, the key tropical geometric object is the \emph{conormal fan} $\Sigma_{M, M^\perp}$ of the matroid. Ardila--Denham--Huh \cite{ADH1} showed that the resulting Chow ring $A^*(M,M^\perp)$ still satisfies the K\"ahler package. 
Combinatorially, the proof that the degrees deg$(\gamma^k \delta^{n-2-k})$ still give the desired $h$-vector is now much more intricate. It involves giving a Lagrangian interpretation of the Chern-Schwartz-MacPherson classes of the matroid, and studying the combinatorics of the \emph{biflags of biflats} of $M$, which correspond to the cones of the conormal fan $\Sigma_{M,M^\perp}$.

\subsection{The conormal fan: an origin story}\label{sec:whybiperm}

A central question in Ardila-Denham-Huh's program was the following: How should one define the conormal fan $\Sigma=\Sigma_{M,M^\perp}$ and the corresponding Chow ring $A^*(M,M^\perp)$ of a matroid $M$? This is the question that led to the harmonic polytope and the bipermutahedron, as we now explain.

When $\M$ is the matroid of a subspace $V$ of $\mathbb{C}^E$, the conormal fan $\Sigma_{\M, \M^\perp}$ is a tropical model of the projectivized conormal bundle of $V$. Since $\M^\perp$ is the matroid of the orthogonal complement of $V$, we expect the conormal fan to be supported on $\text{Trop}(\M) \times \text{Trop}(\M^\perp) \subset \N_n \times \N_n$, where $\N_n = \R^n / \R$. 
A desirable  fan structure $\Sigma$ on this support should have the following properties:

\noindent 1. There are classes $\gamma$ and $\delta$ in its Chow ring whose intersection numbers give the desired $h$-vector.

\noindent 2. The Chow ring is tractable for algebraic combinatorial computations, so we can prove 1.

\noindent 3. The fan is a subfan of the normal fan of a polytope, so its ample cone is nonempty.

\noindent 4. The fan is Lefschetz, so we can derive the desired log-concavity results.

Requirement 4. is resolved in \cite{ADH1} by showing that being Lefschetz only depends on the support of the fan -- and  $\text{Trop}(\M) \times \text{Trop}(\M^\perp)$ is the support of a  Lefschetz fan $\Sigma_{\M} \times \Sigma_{\M^\perp}$ by \cite{AHK} -- and not on the fan structure that we choose. Thus we can focus on the first three.

Requirement 2. is stated imprecisely, but a very desirable initial property is that our fan $\Sigma$ is simplicial. In this case the Chow ring $A(\Sigma)$ of the toric variety $X(\Sigma)$ has an algebraic combinatorial presentation due to Brion \cite{Brion}, and an interpretation in terms of piecewise polynomial functions due to Billera \cite{Billera}.
These results make it possible to carry out intersection-theoretic computations in this Chow ring.
Thus the first fan structure on $\text{Trop}(\M) \times \text{Trop}(\M^\perp)$ that we might try to use is the product of Bergman fans $\Sigma_{\M} \times \Sigma_{\M^\perp}$. It is simplicial, it does have a nice combinatorial structure, and it is a subfan of the normal fan of the product of permutohedra $\Pi_n \times \Pi_n$, addressing 2-4.

However, requirement 1. poses a problem. Relying on the geometry of the representable case, we expect the classes $\gamma$ and $\delta$ in the conormal Chow ring $A^*(\M, \M^\perp)= A^*(\Sigma)$ should be the pullbacks of a piecewise linear function $\alpha$ on $N_E$ under the maps
\[
\begin{array}{ccccc}
\pi: \Sigma \longrightarrow \Sigma_M &  \qquad & 
\sigma: \Sigma \longrightarrow \Gamma_n \\
\pi(x,y) = x & \qquad & 
\sigma(x,y) = x+y
\end{array}
\]
where $\Gamma_n$ is the reduced normal fan of the standard simplex and $\Sigma$ is our desired fan structure on $\text{Trop}(\M) \times \text{Trop}(\M^\perp)$. Here the piecewise linear function $\alpha$ can be regarded as a class in the Chow ring of the matroid $A^*(M)$ (where it is the class $\alpha$ of \cite{AHK}) or in the Chow ring of $\Gamma_n$.
 If $\Sigma$ equals $\Sigma_{\M} \times \Sigma_{\M^\perp}$ or any refinement of it, the first map is a map of fans, and $\gamma$ \emph{is} well defined. However, the second map is \emph{not} a map of fans for $\Sigma = \Sigma_{\M} \times \Sigma_{\M^\perp}$. Thus the product fan structure will not serve our purposes; we need to subdivide it further. How might we do this?

At this point, it is instructive to return to the case of tropical linear spaces above. 
In that case, one wants a similarly convenient fan structure for the tropical linear space $\text{Trop}(\M)$. Fortunately, one can do this for all matroids on $[n]$ at once, by intersecting $\text{Trop}(\M)$ with the permutohedral fan $\Sigma_n$. The result is the \emph{Bergman fan} $\Sigma_{\M}$ of $\M$, and the intersection theoretic computations in $\text{Trop}(M)$ become computations with flags of flats.

Similarly, we might try to find a suitable fan structure of $\text{Trop}(\M) \times \text{Trop}(\M^\perp)$ for all matroids $\M$ on $[n]$ simultaneously, by intersecting them with an appropriate complete fan. There is a minimal candidate: the coarsest common refinement of the product of permutohedral fans $\Sigma_n \times \Sigma_n$ -- which induces the fan structure $\Sigma_{\M} \times \Sigma_{\M^\perp}$ --  
and $\sigma^{-1}(\Gamma_n)$ --  the coarsest fan that guarantees that the class $\delta$ is well-defined. The resulting fan is the \emph{harmonic fan}.

The harmonic fan  is the reduced normal fan of a polytope, namely, the Minkowski sum
\[
H_{n,n} := (\Pi_n \times \Pi_n) + D_n,
\]
of the product of two permutohedra $\Pi_n \times \Pi_n$ and the diagonal simplex $D_n = \conv\{\e_i + \f_i\}_{i \in E}$. Thus requirement 3. above is satisfied. The resulting polytope is the \emph{harmonic polytope}.

\medskip

\noindent \textsf{\textbf{Combinatorial models for the Lagrangian geometry of matroids}}
The harmonic polytope has a drawback for our geometric purposes: it is not simple, so the resulting fan structure on $\text{Trop}(\M) \times \text{Trop}(\M^\perp)$ is not simplicial, posing numerous obstacles. 
Thus we wish to find a simple polytope that has the harmonic polytope $H_{n,n}$ as a Minkowski summand, and has simple enough combinatorial structure that we can carry out computations. Ardila--Denham--Huh propose the \emph{bipermutohedron} $\Pi_{n,n}$ as a solution; the combinatorics of this polytope is studied in detail in \cite{biperm}. Its faces are indexed by \emph{biflags} of subsets of $[n]$. 

The \emph{conormal fan} $\Sigma_{M,M^\perp}$ is then defined as the intersection of $\text{Trop}(\M) \times \text{Trop}(\M^\perp)$ with the bipermutahedral fan $\Sigma_{n,n}$; its faces are indexed by the \emph{biflags of biflats} of $M$. The resulting intersection-theoretic computations in the Chow ring $A^*(M,M^\perp)$ require an intricate, interesting analysis of these biflags; this can be done using a tropical geometric approach \cite[Sections 3,4]{ADH1} involving Chern-Schartz-MacPherson classes. There is also an algebraic combinatorial approach \cite{ADH2}  involving an intricate analysis of the biflags of biflats of ordered matroids. Both of these approaches require significant new ideas and lead to new developments.

As far as we know, there is nothing canonical about the choice of the bipermutahedron $\Pi_{n,n}$ above. It is natural to wonder whether there are alternative, perhaps easier approaches: \textbf{Are there other simple polytopes in whose normal fans we could carry out these Lagrangian geometric computations on matroids?} 
On the other hand, the harmonic polytope $H_{n,n}$ is canonical: \textbf{Every such simple polytope in $\N_n \times \N_n$ must contain the harmonic polytope as a summand.}

For every such simple polytope $P$ in $\N_n \times \N_n$ that we can find, we expect that the program above will produce a combinatorial model for the Lagrangian geometry of matroids on $[n]$. The building blocks of this model will be given by the face structure of the polytope $P$, and how it interacts with each matroid $M$ on $[n$]. The resulting intersection theoretic computations will teach us about the Lagrangian combinatorics of matroids. This seems to be a direction of study worth pursuing further.

\section{The harmonic polytope} \label{sec;polytope}

Having motivated the study of the harmonic polytope, we now analyze it in detail. 
Let $n$ be a positive integer and let $[n]:=\{1,\ldots, n\}$.
Consider two copies of $\R^n$ with respective standard bases  $\{\e_i \, : \, i \in [n]\}$ and $\{\f_i \, : \, i \in [n]\}$. For any subset $S$ of $[n]$, we write 
\[
\mathbf{e}_S=\sum_{i \in S} \mathbf{e}_i, \qquad \mathbf{f}_S=\sum_{i \in S} \mathbf{f}_i.
\]
We also consider the $(n-1)$-dimensional vector space $\N_n := \mathbb{R}^n/\R\e_{[n]}$.

The (inner) normal fan $\NN(P)$ of a polytope $P \subset \R^n$ is a complete fan in the dual space $(\R^n)^*$ whose cones are
\[
\NN(P)_Q := \{w \in (\R^n)^* \, : \, P_w \supseteq Q\}
\]
for each nonempty face  $Q$ of $P$, where $P_w = \{x \in P \, : \, w(x) = \min_{y \in P} w(y)\}$ is the $w$-minimal face of $P$. 
The face poset of the 
normal fan of $P$ is isomorphic to the reverse of the face poset of $P$.
The relative interior of a cone $\sigma$ is the interior of $\sigma$ inside its affine span. In particular, the relative interior of $\NN(P)_Q$ is 
\[
\NN(P)_Q^\circ := \{w \in (\R^n)^* \, : \, P_w = Q\}.
\]
The chambers of $\NN(P)$ are the cones of maximal dimension.

The normal fan of the permutohedron 
\[
\Pi_n = \conv\Big\{(x_1,\ldots,x_n)\ | \ \text{$x_1,\ldots,x_n$ is a permutation of $[n]$} \Big\} \subseteq \mathbb{R}^n
\]
is the \emph{permutohedral fan} $\Sigma_n\subset \N_n$,  also known as the \emph{braid fan} or the \emph{type $A$ Coxeter complex}. It is the complete simplicial fan in $\R^n$ whose chambers are cut out by the $n$-dimensional \emph{braid arrangement}, the real hyperplane arrangement in $\R^n$ consisting of the ${n \choose 2}$ hyperplanes 
\[
z_i = z_j,  \ \ \text{for distinct elements $i$ and $j$ of $[n]$.}
\]
The face of the permutohedral fan containing a given point $z$ in its relative interior is determined by the relative order of its homogeneous coordinates $(z_1,\ldots,z_n)$. 

Let $D_n$ be the $(n-1)$-dimensional simplex,
\[
D_n := \, \conv \Big\{\e_i + \f_i \, : \, i \in [n] \Big \} \subseteq \R^n \times \R^n. \
\]
The normal fan of the simplex $D_{n}$ is the simplicial fan $\Delta_{n,n}$ whose $n$  chambers are the cones
\[
\mathscr{C}_k = \Big\{(z, w) \in \R^n \times \R^n  \, | \, \min_{i \in [n]} (z_i+w_i) = z_k+w_k\Big\}. 
\]

Recall that the \emph{Minkowski sum} and \emph{Minkowski difference} of polytopes $P$ and $Q$ in $\R^d$ are
\[
P+Q = \{p+q \, : \, p \in P, \, q \in Q\}, \qquad
P-Q = \{r \in \R^d \, : \, r+Q \subseteq P\}.
\]
The following polytope is our main object of study. 
 
\begin{definition}\label{def:polytope}
The \emph{harmonic polytope} is the Minkowski sum
\[
H_{n,n} := D_n + (\Pi_n \times \Pi_n) \subset \R^n \times \R^n.
\]
The \emph{harmonic fan} is its reduced normal fan $\mathcal{N}(H_{n,n})$ in $\N_n \times \N_n$. 
\end{definition}

Figure \ref{fig:hexagon} shows the harmonic polytope $H_{2,2}$ and its reduced normal fan. 
The normal fan of a Minkowski sum of two polytopes is the coarsest common refinement of their normal fans, see e.g. \cite[Proposition 7.12]{Ziegler}.
Therefore, the normal fan of $H_{n,n}$ is the coarsest common refinement of the normal fans of $D_n$ and $\Pi_n \times \Pi_n$. 
Its lineality space is $\R\{\e_{[n]}, \f_{[n]}\}$.

\subsection{The face structure of the harmonic polytope.}

The cone of the harmonic fan containing a point $(z,w) \in \N_n \times \N_n$ is determined by:

$\bullet$ the set of indices $i$ for which the minimum of $z_i + w_i$ is attained,

$\bullet$ the \textbf{reverse}\footnote{Of course, this is the same information as the relative order of the $z_i$s. We use the reverse order because it is consistent with our choice of working with inner normal fans.} relative order of the $z_i$s, and 

$\bullet$ the reverse relative order of the $w_i$s.

\noindent Our next task is to characterize the triples that arise in this way.

Recall that an \emph{ordered set partition} of $[n]$ is a sequence $\pi=E_1|\cdots|E_\ell$ such that $E_1 \cup \cdots \cup E_\ell = [n]$ and $E_i \cap E_j = \emptyset$ for all $i \neq j$. The \emph{length} of $\pi$ is $\ell(\pi) := \ell$.
The ordered set partitions of $[n]$ form a poset under \emph{adjacent refinement}, where $\pi \leq \pi'$ if every block of $\pi'$ is a union of a set of consecutive blocks of $\pi$. For example $14|3|26|8|57 \leq 134 | 26 | 578$.

\begin{definition} \label{def:harmonictriples}
The poset of harmonic triples $HT_n$ is defined as follows:
\begin{enumerate}
\item
A \emph{harmonic triple} $\tau = (K; \pi_1, \pi_2)$ on $[n]$ consists of a nonempty subset $K \subseteq [n]$ and two ordered set partitions $\pi_1$ and $\pi_2$ of $[n]$ such that
\begin{enumerate}
\item The restrictions $\pi_1|K$ and $\pi_2|K$ of $\pi_1$ and $\pi_2$ to $K$ are opposite to each other, and
\item If $j \notin K$ appears in the same or a later block than $k \in K$ in one of the set partitions $\pi_1$ and $\pi_2$, then $j$ must appear in an earlier block than $k$ in the other set partition.
\end{enumerate}

\item
The \emph{poset of harmonic triples} $HT_n$ is defined by setting $(K; \pi_1, \pi_2) \leq (K'; \pi'_1, \pi'_2)$ if and only if $K \subseteq K'$, $\pi_1$ is an adjacent refinement of $\pi'_1$, and $\pi_2$ is an adjacent refinement of $\pi'_2$. 

\item
A \emph{fine harmonic triple} is a minimal element of the poset $HT_n$. A \emph{coarse harmonic triple} is a maximal element of $HT_n - \{\widehat{1}\}$.
\end{enumerate}
\end{definition}

Notice that the maximum element $\widehat{1}$ of $HT_n$ is the triple $([n], [n], [n])$. The fine harmonic triples are the minimal elements, for which $K$ consists of a single element $k$, and $\pi_1$ and $\pi_2$ only have blocks of size $1$ -- and hence may be thought of as permutations in one-line notation.

\begin{example}\label{ex:triples}
Consider the triple $(\textbf{3467}, \mathbf{4}5|8|2|1\mathbf{37}9|\mathbf{6}, \mathbf{6}|1|59|2\mathbf{3}\mathbf{7}|8|\mathbf{4})$, were we omit the brackets and write the elements of $K$ in bold for easier readability. The reader is invited to verify that this triple satisfies the required conditions to be harmonic.
On the other hand, $j=1$ and $k=3$ do not satisfy condition (b) in the non-harmonic triple $(\textbf{3467}, \mathbf{4}5|8|2|1\mathbf{37}9|\mathbf{6}, \mathbf{6}|5|2\mathbf{3}\mathbf{7}|89|1\mathbf{4})$.
\end{example}

\begin{proposition} \label{prop:faces}
The combinatorial structure of the harmonic fan $\mathcal{N}(H_{n,n})$  is as follows.
\begin{enumerate}
\item
The cones of the harmonic fan are in bijection with the harmonic triples on $[n]$.

\item 
The dimension of the cone labeled by $\tau = (K; \pi_1, \pi_2)$ is $\ell(\pi_1) + \ell(\pi_2) - \ell(\pi_1|K) - 1$.

\item
Two cones $\sigma$ and $\sigma'$ of the harmonic fan satisfy $\sigma \supseteq \sigma'$ if and only if their harmonic triples satisfy $\tau \leq \tau'$ in $HT_n$.
\end{enumerate}
\end{proposition}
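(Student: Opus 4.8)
The plan is to analyze the normal fan of $H_{n,n}$ directly via the general fact that the normal fan of a Minkowski sum is the common refinement of the normal fans of the summands. So the faces of $\mathcal{N}(H_{n,n})$ correspond to the cones of the common refinement of $\Delta_{n,n}$ (the normal fan of $D_n$) and $\Sigma_n \times \Sigma_n$ (the normal fan of $\Pi_n \times \Pi_n$), all lying in $\N_n \times \N_n$. A point $(z,w)$ lies in the relative interior of a cone of this refinement, and the data determining that cone is exactly the triple listed just before the proposition: (i) the set $K$ of indices $k$ at which $\min_i(z_i+w_i)$ is attained — this records which chamber(s) of $\Delta_{n,n}$ contain $(z,w)$, i.e. the face of the $D_n$-normal fan; and (ii) the reverse orderings (ordered set partitions) $\pi_1$ of the $z_i$ and $\pi_2$ of the $w_i$, recording the face of $\Sigma_n \times \Sigma_n$. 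The first main step is therefore to show that a triple $(K;\pi_1,\pi_2)$ arises from some $(z,w)$ in this way \emph{if and only if} it is a harmonic triple in the sense of Definition~\ref{def:harmonictriples}; this proves part (1), and part (3) follows because containment of faces of a fan reverses under refinement in a way compatible with refinement of the $\pi_i$ and inclusion of the $K$'s.

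For the ``only if'' direction of the characterization, suppose $(z,w)$ realizes $(K;\pi_1,\pi_2)$. For $k \in K$ we have $z_k + w_k = \min_i(z_i+w_i)$, so if $k, k'\in K$ then $z_k + w_k = z_{k'}+w_{k'}$, which forces: $k$ precedes $k'$ in the reverse order of the $z$'s iff $k'$ precedes $k$ in the reverse order of the $w$'s, i.e. $\pi_1|K$ and $\pi_2|K$ are opposite — this is condition (a). For condition (b), take $j\notin K$ and $k\in K$; then $z_j+w_j > z_k+w_k$, i.e. $(z_j - z_k) + (w_j - w_k) > 0$, so $z_j - z_k$ and $w_j - w_k$ cannot both be $\le 0$; rephrasing ``$j$ appears in the same or later block than $k$'' as ``$z_j \le z_k$'' (in one partition) forces ``$w_j < w_k$'', i.e. $j$ strictly precedes $k$ in the other — this is condition (b). For the ``if'' direction, given a harmonic triple one constructs an explicit $(z,w)$: choose $z$ generic within the ordered-set-partition class $\pi_1$ (constant on blocks, decreasing across blocks, with the block-to-block gaps chosen large), similarly $w$ for $\pi_2$, and then translate within the lineality directions $\e_{[n]},\f_{[n]}$ so that the common minimum of $z_i+w_i$ is attained exactly on $K$; conditions (a) and (b) are precisely what make this possible. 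I would verify that (a) handles the $K$-vs-$K$ comparisons and (b) the $K$-vs-non-$K$ ones, with enough freedom in the gap sizes to get strict inequalities everywhere else.

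Part (2), the dimension formula, I would get from the dimension of the corresponding cone in $\N_n \times \N_n$. The face of $\Sigma_n$ labeled by $\pi_1$ spans a cone of dimension $\ell(\pi_1)-1$ (in $\N_n$, which is $(n-1)$-dimensional), and likewise $\ell(\pi_2)-1$ for $\pi_2$, so before imposing the $D_n$-condition the product cone has dimension $\ell(\pi_1)+\ell(\pi_2)-2$. Intersecting with the cone $\mathscr{C}_K$ of $\Delta_{n,n}$ (the locus where the coordinates indexed by $K$ achieve the minimal value of $z+w$) imposes, on the relative interior, that $z_k + w_k$ is constant over $k\in K$; since the relative order within $K$ is already fixed by $\pi_1|K$ (equivalently $\pi_2|K$, its opposite), this cuts down the dimension by exactly $\ell(\pi_1|K)-1$, giving $\ell(\pi_1)+\ell(\pi_2)-2 - (\ell(\pi_1|K)-1) = \ell(\pi_1)+\ell(\pi_2)-\ell(\pi_1|K)-1$ as claimed. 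The main obstacle I anticipate is the bookkeeping in the ``if'' direction: producing a point $(z,w)$ that realizes a prescribed harmonic triple and \emph{nothing finer}, i.e. verifying that all the inequalities that should be strict can be made simultaneously strict. This is where conditions (a) and (b) must be used in full, and where a careful ordered choice of the numerical gaps (largest gaps near the ``ends'' so that non-$K$ indices can be pushed safely above or below the minimum) does the work; I would organize this as a small lemma on realizability before assembling the three parts.
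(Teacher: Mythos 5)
Your overall route is the same as the paper's: identify faces of the common refinement with the triples $(K;\pi_1,\pi_2)$ recorded by a relative-interior point, check conditions (a) and (b) in the ``only if'' direction exactly as the paper does, count degrees of freedom minus the $\ell(\pi_1|K)-1$ constraints for part (2), and deduce part (3) from adjacent refinement. However, there is a genuine gap in the ``if'' direction of part (1), which is the heart of the argument. Your realizability recipe --- choose $z$ generically in the braid cone of $\pi_1$, choose $w$ generically in the braid cone of $\pi_2$, then ``translate within the lineality directions $\e_{[n]},\f_{[n]}$ so that the minimum of $z_i+w_i$ is attained exactly on $K$'' --- cannot work as stated. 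Translating by $\lambda\e_{[n]}+\mu\f_{[n]}$ adds the constant $\lambda+\mu$ to every coordinate sum $z_i+w_i$, so it never changes the set of indices attaining the minimum. Moreover, if $z$ and $w$ are chosen independently and generically in their respective cones, then $z_k+w_k$ will generically be attained at a single index, so the realized triple has $|K|=1$ and is strictly finer than the prescribed one; no lineality translation, and no choice of ``gap sizes'' made separately for $z$ and for $w$, repairs this, because realizing $|K|\ge 2$ requires the exact (non-generic) equalities $z_k+w_k=z_{k'}+w_{k'}$ for all $k,k'\in K$.

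The missing idea is precisely the coordinated assignment that the paper uses: define $z_k$ to be the index of the block of $\pi_2|K$ containing $k$ and $w_k$ the index of the block of $\pi_1|K$ containing $k$; condition (a) (the restrictions are opposite) is what makes $z_k+w_k$ equal to a single constant $c$ on all of $K$ while producing the correct relative orders within $K$. Only after this coupling does one place the coordinates indexed by $j\notin K$, and here the placement must be ``just below'' the next larger $K$-value (within a small $\epsilon$), so that condition (b) forces $z_j+w_j>c$: if $z_j\le z_k$ for the minimal such $k\in K$, then $j$ lies weakly after $k$ in $\pi_1$, hence strictly before $k$ in $\pi_2$, and the $\epsilon$-placement gives $z_j+w_j>c+1-2\epsilon>c$. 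Your sketch correctly identifies that (a) governs the $K$-versus-$K$ comparisons and (b) the $K$-versus-non-$K$ ones, but comparisons of orders are not enough: one needs equalities of the sums on $K$ and strict excess off $K$, and that is exactly what the coupled construction delivers and what ``generic in each braid cone plus a lineality shift'' does not. Parts (2) and (3) of your plan are fine and match the paper.
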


\begin{proof}
1. Given a cone $\sigma$ of the harmonic fan, we define the triple $\tau(\sigma)$ as follows. Let $(z,w)$ be an interior point of $\sigma$. We let $K$ be the set of indices $k$ for which the minimum of $z_k + w_k$ is attained, $\pi_1$ be the partition encoding the reverse relative order of the $z_i$s, and $\pi_2$ be the reverse relative order of the $w_i$s. For example, we have, for the following cone $\sigma$, 
\begin{eqnarray*}
z_k+w_k \text{ is minimum for } k=3,4,6,7 & & \\
z_6 < z_1=z_3=z_7=z_9 < z_2 < z_8 < z_4=z_5
& \mapsto& \tau(\sigma) = (\textbf{3467}, \mathbf{4}5|8|2|1\mathbf{37}9|\mathbf{6}, \mathbf{6}|1|59|2\mathbf{3}\mathbf{7}|8|\mathbf{4}). \\ 
w_4 < w_8 < w_2=w_3=w_7<w_5=w_9<w_1<w_6 & &
\end{eqnarray*}

Since $z_k+w_k$ is constant for $k$ in $K$, the relative order of the $z_k$s is exactly the opposite of the relative order of the $w_k$s, so (a) holds. Also, if $j \notin K$ appears in the same or a later block than $k \in K$ in, say the first set partition, then we have $z_k \geq z_j$. But then $z_k+w_k < z_j + w_j$ implies that $w_k < w_j$, so $j$ must appear before $k$ in the second set partition. Therefore (b) also holds.

\smallskip

Conversely, suppose $\tau=(K;\pi_1, \pi_2)$ is a harmonic triple, and let us construct a point $(z,w)$ whose associated triple is $\tau$. We begin by defining the values of $z_k$ and $w_k$ for $k \in K$. We let $z_k=a$ where $k$ is in the $a$th block of $\pi_2|K$ and $w_k=b$ where $k$ is in the $b$th block of $\pi_1|K$. Then the $z_k$s and $w_k$s are in the order specified by $\pi_1|K$ and $\pi_2|K$, respectively, and, since $\pi_1|K$ and $\pi_2|K$ are opposites of each other, $z_k+w_k=c$ where $\pi_1|K$ and $\pi_2|K$ have $c-1$ blocks. 

Now define the values of $z_j$ for $j \notin K$ as follows. If $j$ is in the same block of $\pi_1$ as $k \in K$ set $z_j=z_k$.
Define the remaining entries $z_j$ to have the order stipulated by $\pi_1$, while making each one of them very large -- say, within a small $\epsilon>0$ of the first entry $z_k$ such that $z_k > z_j$, if there is one. For example, for the triple $\tau= (\textbf{3467}, \mathbf{4}5|8|2|1\mathbf{37}9|\mathbf{6}, \mathbf{6}|1|59|2\mathbf{3}\mathbf{7}|8|\mathbf{4})$ of Example \ref{ex:triples}, we may set
\[
z_{\mathbf 6} = \mathbf{1} < z_1=z_{\mathbf 3}=z_{\mathbf 7}=z_9=\mathbf{2} < z_2=2.8 < z_8 = 2.9 < z_{\mathbf 4}=z_5=\mathbf{3}
\]
\[
w_{\mathbf 4} = \mathbf{1} < w_8=1.9 < w_2 = w_{\mathbf 3}= w_{\mathbf 7}=\mathbf{2} <  w_5 = w_9 = 2.8< w_1  = 2.9 < w_{\mathbf 6} = \mathbf{3}.
\]
By construction, the order of the $z_i$s (resp. the $w_i$s) is the opposite of the order dictated by $\pi_1$ (resp. $\pi_2$). Also $z_k+w_k=c$ is constant for $k \in K$. It remains to show that $z_j+w_j >c$ for $j \notin K$. Assume contrariwise that $z_j+w_j \leq c$. Then for any $k \in K$ we must have $z_j \leq z_k$ or $w_j \leq w_k$. Assume it is the former, and choose $k \in K$ where $z_k$ is minimum such that $z_k \geq z_j$. By construction, we have $z_j > z_k-\epsilon$. Furthermore $j$ comes after $k$ in $\pi_1$, so it must come before $k$ in $\pi_2$; by construction, we have $w_j > (w_k+1)-\epsilon$. Thus $z_j+w_j > c+1-2\epsilon > c$, a contradiction.
We conclude that $\tau$ is the label of a cone of the harmonic fan containing $(z,w)$, as desired.

\medskip

2. The set of 
points $(z,w)\in\N_n \times \N_n$ that give rise to the ordered set partitions $\pi_1$ and $\pi_2$ have $(\ell(\pi_1)-1) + (\ell(\pi_2)-1)$ degrees of freedom. The condition that $z_k+w_k$ are equal for all $k \in K$ introduces $\ell(\pi_1|K)-1 = \ell(\pi_2|K)-1$ linear constraints.

\medskip

3. To go up the face poset from the cone indexed by $(K; \pi_1, \pi_2)$, we need to turn some of the defining equalities into inequalities. The effect of this on the label is to remove elements from $K$ and break a parts of $\pi_1$ and $\pi_2$  into adjacent parts.
\end{proof}

Using \Cref{prop:faces}, one may check that the harmonic fan is neither simple nor simplicial, already for $n=3$. We now give the vertex and inequality description of the harmonic polytope.

\begin{proposition} \label{prop:vertices}
The number of vertices of the harmonic polytope $H_{n,n}$ is
\[
v(H_{n,n}) = (n!)^2 \left(1 + \frac12 + \frac13 + \cdots + \frac1n \right).
\]
\end{proposition}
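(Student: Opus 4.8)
The plan is to count the vertices of $H_{n,n}$ as the maximal cones of its normal fan. By Proposition~\ref{prop:faces}, the faces of the harmonic fan $\mathcal{N}(H_{n,n})$ correspond to harmonic triples with inclusion reversed; since $\mathcal{N}(H_{n,n})$ is a complete fan in $\N_n \times \N_n$, which has dimension $2n-2$, its maximal (full-dimensional) cones are exactly the faces of dimension $2n-2$, and by part (2) of that proposition these are precisely the faces labeled by \emph{fine} harmonic triples $\tau = (k; \sigma, \rho)$, for which $|K|=1$ and $\pi_1,\pi_2$ have only singleton blocks, so that $\ell(\pi_1)+\ell(\pi_2)-\ell(\pi_1|K)-1 = n+n-1-1 = 2n-2$. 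Hence $v(H_{n,n})$ equals the number of fine harmonic triples. In a fine triple we may regard $\sigma = \pi_1$ and $\rho = \pi_2$ as permutations of $[n]$ in one-line notation; condition (a) of Definition~\ref{def:harmonictriples} is then vacuous (the restriction of each of $\pi_1,\pi_2$ to $\{k\}$ is the single block $\{k\}$), and condition (b) simplifies to: for every $j \neq k$, the element $j$ precedes $k$ in $\sigma$ or precedes $k$ in $\rho$.

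Next I would reorganize the count by the pair $(\sigma,\rho)$. Given $\sigma,\rho \in S_n$, let $P_{\sigma,\rho}$ be the poset on $[n]$ that is the intersection of the two linear orders defined by $\sigma$ and $\rho$, so $a <_P b$ iff $a$ precedes $b$ in both $\sigma$ and $\rho$. The simplified condition says exactly that no $j \neq k$ satisfies $k <_P j$, i.e.\ that $k$ is a maximal element of $P_{\sigma,\rho}$. Therefore
\[
v(H_{n,n}) \;=\; \sum_{(\sigma,\rho)\in S_n\times S_n} \#\{\text{maximal elements of } P_{\sigma,\rho}\}.
\]
Relabeling the ground set $[n]$ is a poset isomorphism, so for each fixed $\sigma$ the inner sum over $\rho$ has the same value; hence the right-hand side equals $n!\sum_{\rho\in S_n}\#\{\text{maximal elements of } P_{\mathrm{id},\rho}\}$.

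Finally I would evaluate $\sum_{\rho}\#\{\text{maximal elements of } P_{\mathrm{id},\rho}\}$. With $\sigma=\mathrm{id}$, an index $k$ is maximal in $P_{\mathrm{id},\rho}$ iff no $j$ has both $k<j$ and "$k$ precedes $j$ in $\rho$", i.e.\ iff every $j\in\{k+1,\dots,n\}$ precedes $k$ in $\rho$ — equivalently, $k$ is the last element of $\{k,k+1,\dots,n\}$ to occur in $\rho$. Over a uniformly random $\rho$ the $n-k+1$ elements of $\{k,\dots,n\}$ are equally likely to be last, so this holds for exactly $n!/(n-k+1)$ permutations $\rho$. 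Summing over $k$ gives $\sum_{\rho}\#\{\text{maximal elements}\}=\sum_{k=1}^{n} n!/(n-k+1)=n!\,H_n$, and therefore $v(H_{n,n}) = (n!)^2 H_n$. The only real content lies in the two translations — from vertices of $H_{n,n}$ to fine harmonic triples, and from condition (b) to "$k$ is maximal in $P_{\sigma,\rho}$" — together with the symmetry reduction to $\sigma=\mathrm{id}$; the remaining count is a one-line expectation, and one can sanity-check it on the hexagon $H_{2,2}$, where $(2!)^2 H_2 = 6$.
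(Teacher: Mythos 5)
Your proof is correct, and it shares the paper's first step: by Proposition \ref{prop:faces}, the vertices of $H_{n,n}$ correspond to the fine harmonic triples $(k;\sigma,\rho)$, for which condition (a) of Definition \ref{def:harmonictriples} is vacuous and condition (b) says that no $j\neq k$ follows $k$ in both permutations. Your dimension count showing that the $(2n-2)$-dimensional cones are exactly those of the fine triples is also right, since $\ell(\pi_1)+\ell(\pi_2)-\ell(\pi_1|K)-1=2n-2$ forces $\ell(\pi_1)=\ell(\pi_2)=n$ and $|K|=1$. Where you genuinely diverge from the paper is in the enumeration. The paper fixes $k$, partitions the remaining $n-1$ elements into three classes according to their positions relative to $k$ in $\sigma$ and $\rho$, and evaluates the resulting multinomial sum by binomial identities to arrive at $(n!)^2\sum_{a}\frac{1}{a+1}$. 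You instead sum over the pair $(\sigma,\rho)$ the number of admissible $k$, recognize those $k$ as the maximal elements of the intersection poset $P_{\sigma,\rho}$ of the two linear orders, reduce to $\sigma=\mathrm{id}$ by relabeling (the relabeling $\rho\mapsto\sigma^{-1}\rho$ sweeps out all of $S_n$, so the inner sums agree), and observe that $k$ is maximal exactly when it occurs last among $\{k,\dots,n\}$ in $\rho$, which happens for $n!/(n-k+1)$ permutations; summing gives $n!\,H_n$ and hence $(n!)^2H_n$. Your route replaces the paper's algebraic manipulation by a symmetry/expectation argument (the expected number of maxima of the intersection of two random linear orders is $H_n$), which is shorter and more conceptual, while the paper's computation is more elementary and self-contained; both rest on the same combinatorial model supplied by Proposition \ref{prop:faces}, so the difference lies purely in how the fine triples are counted.
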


\begin{proof}
By Proposition \ref{prop:faces} we need to count the fine harmonic triples $\tau = (K; \pi_1, \pi_2)$; these are the ones where $K = \{k\}$ and both $\pi_1$ and $\pi_2$ are permutations.
To specify $\tau$, we first specify the element $k$. Out of the remaining $n-1$ elements, we choose 
which $a$ of them precede $k$ in  $\pi_1$ and follow $k$ in $\pi_2$,
which $b$ of them precede $k$ in  both $\pi_1$ and $\pi_2$, and
which $c$ of them follow $k$ in  $\pi_1$ and precede $k$ in $\pi_2$. 
Finally we choose the order of the $a+b$ elements preceding $k$ in $\pi_1$, the order of the $c$ elements following $k$ in $\pi_1$, the order of the $b+c$ elements preceding $k$ in $\pi_2$, and the order of the $a$ elements following $k$ in $\pi_2$. It follows that
\begin{eqnarray*}
v(H_{n,n}) &=& n \sum_{a+b+c = n-1} {n-1 \choose a,b,c} (a+b)! \, c! \, a! \, (b+c)! \\
&=& n! \sum_{a+b+c=n-1} \frac{(a+b)!(b+c)!}{b!} \\ 
&=& n!  \,\sum_{a=0}^{n-1} \left( a!(n-1-a)! \sum_{b=0}^{n-1-a} {a+b \choose a} \right)\\
&=& n! \, \sum_{a=0}^{n-1} \left(a!(n-1-a)! {n \choose a+1} \right)\\
&=& (n!)^2 \, \sum_{a=0}^{n-1} \frac1{a+1},
\end{eqnarray*}
as desired.
\end{proof}

Let us give a concrete description of the vertices of $H_{n,n}$.

\begin{proposition}\label{prop:vertexcoords}
The vertices of the harmonic polytope $H_{n,n}$ are 
\[
v_\tau = \e_k + \f_k + (\pi_1^{-1},0) + (0, \pi_2^{-1})
\]
for the fine harmonic triples $\tau = (k; \pi_1, \pi_2)$ on $[n]$, where $\pi^{-1}$ denotes the inverse of the permutation $\pi$ in one-line notation.
\end{proposition}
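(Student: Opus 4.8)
The plan is to use the fact that $H_{n,n} = D_n + (\Pi_n \times \Pi_n)$ is a Minkowski sum, so each vertex of $H_{n,n}$ is a sum of faces of the summands that are all minimized by a common linear functional $(z,w)$ in the relative interior of a maximal cone of the harmonic fan. By Proposition~\ref{prop:faces}, the maximal cones of the harmonic fan are labeled by the fine harmonic triples $\tau = (k; \pi_1, \pi_2)$. So I would fix such a $\tau$, pick a generic $(z,w)$ in the corresponding cone, and compute $(D_n)_{(z,w)}$, $(\Pi_n)_z$, and $(\Pi_n)_w$ separately, then add.

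First I would handle $D_n$: since $D_n = \conv\{\e_i + \f_i : i \in [n]\}$ and the label records that $z_k + w_k$ is the unique minimum of $z_i + w_i$ over $i \in [n]$ (uniqueness because $K = \{k\}$ is a singleton for a fine triple), the $(z,w)$-minimal face of $D_n$ is the single vertex $\e_k + \f_k$. Next, $(\Pi_n)_z$ is the vertex of the permutohedron corresponding to the reverse order of the $z_i$'s: for the standard permutohedron $\Pi_n = \conv\{(\sigma(1),\dots,\sigma(n))\}$, the $z$-minimal vertex assigns the largest coordinate value $n$ to the index $i$ with the smallest $z_i$, and so on, which is exactly the point with $i$th coordinate equal to the rank of $i$ in the order encoded by $\pi_1$; unwinding the indexing conventions this is $\pi_1^{-1}$ written in one-line notation as a point of $\R^n$. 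Similarly $(\Pi_n)_w = \pi_2^{-1}$. I would be careful here to check the convention: $\pi_1$ records the \emph{reverse} relative order of the $z_i$'s (as stipulated right before Definition~\ref{def:harmonictriples} and used in the proof of Proposition~\ref{prop:faces}), and the permutohedron is defined with the minimum taken, so the two reversals should cancel and give $\pi_1^{-1}$ rather than its reverse; this bookkeeping is the one place an error could creep in, so I would verify it on the $n=2$ hexagon of Figure~\ref{fig:hexagon}.

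Adding the three contributions gives $v_\tau = (\e_k + \f_k) + (\pi_1^{-1}, 0) + (0, \pi_2^{-1})$, which is the claimed formula. Finally I would note that as $\tau$ ranges over all fine harmonic triples these are exactly the vertices of $H_{n,n}$: each maximal cone of the normal fan yields a vertex, distinct maximal cones yield distinct vertices (the map $\tau \mapsto v_\tau$ is visibly injective since $k$, $\pi_1$, and $\pi_2$ can all be read off from $v_\tau$), and by Proposition~\ref{prop:faces}(1)--(2) the maximal cones are precisely the fine harmonic triples. The main obstacle is purely notational: matching up the ``reverse order'' convention for $\pi_1, \pi_2$ with the inner-normal-fan convention and with the one-line-notation-versus-inverse convention, so that the minimizing vertex of $\Pi_n$ comes out as $\pi^{-1}$ and not as $\pi$, its reverse, or $(\pi^{-1})^{\mathrm{rev}}$. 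There is no real combinatorial or geometric difficulty once that is pinned down.
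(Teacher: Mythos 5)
Your proposal is correct and follows essentially the same route as the paper: the paper's proof also takes a generic $(z,w)$ in the chamber labeled by the fine harmonic triple and writes the minimal vertex as $(D_n)_{(z,w)} + (\Pi_n \times 0)_{(z,w)} + (0 \times \Pi_n)_{(z,w)} = (\e_k + \f_k) + (\pi_1^{-1},0) + (0,\pi_2^{-1})$. Your extra bookkeeping about the reverse-order and inner-normal-fan conventions, and the check on the $n=2$ hexagon, are exactly the right sanity checks and resolve as you expect.
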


\begin{proof}
Consider a point $(z,w)$ in the interior of the chamber of the normal fan $\NN(H_{n,n})$ corresponding to a fine harmonic triple $\tau = (k; \pi_1, \pi_2)$. The minimal vertex of $H_{n,n}$ in the direction $(z,w)$ is
\begin{eqnarray*}
(H_{n,n})_{z,w} &=& (D_n)_{(z,w)} + (\Pi_n \times 0)_{(z,w)} + (0 \times \Pi_n)_{(z,w)} \\
&=& (\e_k + \f_k) + (\pi_1^{-1},0) + (0, \pi_2^{-1})
\end{eqnarray*}
as desired.
\end{proof}

For example, Figure \ref{fig:hexagon} shows how the harmonic polytope $H_{2,2}$ sits in the lattice $\Z^2 \times \Z^2$. Its inner normal fan is the harmonic fan, which coincides with the bipermutohedral fan (only) for $n=2$; the orientation shown here matches the one in \cite[Figure 4]{ADH1}.

	\begin{figure}[h] 
	\centering
	\begin{tikzpicture}[scale=1.3]
root/.style={circle,draw,inner sep=1.2pt,fill=black}]
	\filldraw[fill=ganador!50,very thick] 
(0,0) node[very thick] {$\bullet$} 
node [below] {$( 2; 1| 2,1| 2)$}--
node [below right=-4] {\gris{\small{$(2; 1|2, 12)$}}} 
(1,1) node {$\bullet$} node [right] {$( 2; 1|2,2|1)$}--
node [right=-2] {\gris{\small{$( 12; 1|2, 2| 1)$}}} 
(1,3) node {$\bullet$} node [right] {$( 1; 1|2,2|1)$}--
node [above right=-4] {\gris{\small{$( 1; 12, 2|1)$}}}
(0,4) node {$\bullet$} node [above] {$(1; 2| 1,2| 1)$} --
node [above left=-4] {\gris{\small{$( 1; 2|1, 12)$}}} 
(-1,3) node {$\bullet$} node [left] {$( 1; 2|1,1|2)$}--
node [left=-2] {\gris{\small{$( 12; 2|1, 1|2)$}}} 
(-1,1) node {$\bullet$} node [left] {$( 2; 2|1, 1|2)$}--
node [below left=-4] {\gris{\small{$( 2; 12, 1|2)$}}} 
(0,0);
\draw (0,2) node {$\bullet$} node [above] {\gris{{\small{$(12; 12, 12)$}}}};
 \begin{scope}[shift={(6.5,2)}, scale=.7, root/.style={circle,draw,inner sep=1.2pt,fill=black},every node/.style={scale=0.95}]
\draw[style=thin,color=gray] (2,0) -- (-2,0);
\draw[style=thin,color=gray] (-2,-2) -- (2,2);
\draw[style=thin,color=gray] (2, -2) -- (-2, 2);
\node at (0,-1.5) {$(1; 2|1, 2|1)$};
\node at (0,1.5) {$(2; 1|2, 1|2)$};
\node at (2,-0.7) {$(1; 2|1, 1|2)$};
\node at (-2,-0.7) {$(1; 1|2, 2|1)$};
\node at (2,0.7) {$(2; 2|1, 1|2)$};
\node at (-2,0.7) {$(2; 1|2, 2|1)$};
\node at (0,0) [root, label={[label distance=-3pt]below:{\gris{\small{$\,(12; 12, 12)$}}}}] {};
\node at (1.8,-1.45) [label={[label distance=5pt]300:{\gris{\small{$(1; 2|1, 12)$}}}}] {};
\node at (1.8,1.45) [label={[label distance=5pt]60:{\gris{\small{$(2; 12, 1|2)$}}}}] {};
\node at (-1.8,-1.45) [label={[label distance=5pt]240:{\gris{\small{$(1; 12, 2|1)$}}}}] {};
\node at (-1.8,1.45) [label={[label distance=5pt]120:{\gris{\small{$(2; 1|2, 12)$}}}}] {};
\node at (1.9,0) [label=right:{\gris{\small{$(12; 2|1, 1|2)$}}}] {};
\node at (-1.9,0) [label=left:{\gris{\small{$(12; 1|2, 2|1)$}}}] {};
\node (1) at (3,0) {};
\node (2) at (4.5,0) {};
 \end{scope}
\end{tikzpicture}
\vspace{.5cm}

\begin{tabular}{|c||c|c|c|c|c|c|}\hline
$(k;\pi_1,\pi_2)$ & 
$( 2;1| 2,1| 2)$ &
$( 2;1| 2,2| 1)$ &
$( 2;2| 1,1| 2)$ &
$( 1;1| 2,2| 1)$ &
$( 1;2| 1,1| 2)$ &
$( 1;2| 1,2| 1)$ \\
\hline
\hline
$\begin{pmatrix} x_1 & x_2 \\ y_1 & y_2  \end{pmatrix}$  &
$\begin{pmatrix} 1 & 3 \\ 1 & 3 \end{pmatrix}$  &
$\begin{pmatrix} 1 & 3 \\ 2 & 2  \end{pmatrix}$  &
$\begin{pmatrix} 2 & 2 \\ 1 & 3  \end{pmatrix}$  &
$\begin{pmatrix} 2 & 2 \\ 3 & 1  \end{pmatrix}$  &
$\begin{pmatrix} 3 & 1 \\ 2 & 2 \end{pmatrix}$  &
$\begin{pmatrix} 3 & 1 \\ 3 & 1 \end{pmatrix}$  \\
\hline
\end{tabular}

\caption{The harmonic polytope $H_{2,2}$ in $\Z^2 \times \Z^2$ and its reduced normal fan. The faces correspond to the harmonic triples on $[2]$. The table lists the fine harmonic triples $(k; \pi_1, \pi_2)$ and the corresponding vertices of $H_{2,2}$. \label{fig:hexagon}}
\end{figure}

For a larger example, the vertex of the harmonic polytope $H_{5,5}$ corresponding to the fine harmonic triple $\tau= (4; 53412, 14352)$ on $[5]$ is
\[
v_{T} = 
\begin{pmatrix}
0 & 0 & 0 & 1 & 0 \\
0 & 0 & 0 & 1 & 0 
\end{pmatrix}
+
\begin{pmatrix}
4 & 5 & 2 & 3 & 1 \\
0 & 0 & 0 & 0 & 0 
\end{pmatrix}
+
\begin{pmatrix}
0 & 0 & 0 & 0 & 0 \\
1 & 5 & 3 & 2 & 4 
\end{pmatrix}
= 
\begin{pmatrix}
4 & 5 & 2 & 4 & 1 \\
1 & 5 & 3 & 3 & 4 
\end{pmatrix}
\]
since $53412^{-1} = 45231$ and $14352^{-1} = 15324$.

\begin{proposition} \label{prop:facets}
The number of facets of the harmonic polytope $H_{n,n}$ is $3^n-3$.
\end{proposition}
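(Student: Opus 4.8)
The plan is to extract the facets directly from Proposition~\ref{prop:faces}. The facets of $H_{n,n}$ are dual to the rays (one-dimensional cones) of the harmonic fan, which is a complete fan in the $(2n-2)$-dimensional space $\N_n\times\N_n$; equivalently, they correspond to the coarse harmonic triples, i.e.\ to the harmonic triples $\tau=(K;\pi_1,\pi_2)$ whose face has dimension $1$, namely
\[
\ell(\pi_1)+\ell(\pi_2)-\ell(\pi_1|K)=2 .
\]
Write $m:=\ell(\pi_1|K)=\ell(\pi_2|K)$, which are equal because $\pi_1|K$ and $\pi_2|K$ are opposite. As $\ell(\pi_i)\ge\ell(\pi_i|K)=m$, the displayed equation gives $m+2=\ell(\pi_1)+\ell(\pi_2)\ge 2m$, so $m\in\{1,2\}$, and I would enumerate the two cases.

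If $m=2$, then $\ell(\pi_1)=\ell(\pi_2)=2$, say $\pi_1=A_1|A_2$ and $\pi_2=B_1|B_2$. Condition (a) of Definition~\ref{def:harmonictriples} forces $K_1:=A_1\cap K$ and $K_2:=A_2\cap K$ to be nonempty with $B_1\cap K=K_2$ and $B_2\cap K=K_1$. Feeding an element of $K_1$ into condition (b) shows every $j\notin K$ lies in $B_1$, and feeding an element of $K_2$ into (b) shows every $j\notin K$ lies in $A_1$; conversely, these containments make (b) hold. Hence, with $\overline K:=[n]\setminus(K_1\cup K_2)$, the facets with $m=2$ are labeled exactly by the triples
\[
\bigl(K_1\cup K_2;\ (K_1\cup\overline K)\mid K_2,\ (K_2\cup\overline K)\mid K_1\bigr)
\]
as $(K_1,K_2)$ runs over ordered pairs of disjoint nonempty subsets of $[n]$; there are $3^n-2\cdot 2^n+1$ of them.

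If $m=1$, then $\{\ell(\pi_1),\ell(\pi_2)\}=\{1,2\}$. Suppose $\ell(\pi_1)=1$, so $\pi_1=[n]$; then every $j\notin K$ lies in the same block of $\pi_1$ as every $k\in K$, so condition (b) forces each $j\notin K$ to lie in an earlier block of $\pi_2$ than each $k\in K$. Since $\ell(\pi_2)=2$ this determines $\pi_2=([n]\setminus K)\mid K$ for a nonempty proper subset $K\subsetneq[n]$, and all such triples are harmonic; this yields $2^n-2$ facets, and the symmetric subcase $\ell(\pi_2)=1$ yields another $2^n-2$. Summing,
\[
(3^n-2\cdot 2^n+1)+2(2^n-2)=3^n-3 .
\]
(The three families can in fact be merged into a single bijection between the facets of $H_{n,n}$ and the non-constant maps $[n]\to\{1,2,3\}$, which renders the count $3^n-3$ immediate.)

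The only real content is the bookkeeping with condition (b): checking in each case that it pins down the claimed shape of $(\pi_1,\pi_2)$, and conversely that a triple of that shape satisfies (a) and (b). The one point that needs care is that $[n]\setminus K$ is allowed to be empty in the $m=2$ family but must be nonempty in the two $m=1$ families; beyond Proposition~\ref{prop:faces} nothing is required.
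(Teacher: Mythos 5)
Your proof is correct and follows essentially the same route as the paper: both reduce to counting the harmonic triples whose face of the harmonic fan has dimension $1$ via Proposition \ref{prop:faces}, split into the same case analysis (your $m=2$ family is the paper's case $\ell(\pi_1),\ell(\pi_2)>1$ with triples indexed by bisubsets $S|T$, and your two $m=1$ families are its cases $\ell(\pi_1)=1$ and $\ell(\pi_2)=1$), and arrive at $(3^n-2\cdot 2^n+1)+2(2^n-2)=3^n-3$. The only cosmetic difference is that the paper records each facet by its normal ray $\e_S+\f_T$, which you do not need for the count.
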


\begin{proof}
In light of Proposition \ref{prop:faces} we need to enumerate the coarse harmonic triples; that is, those for which $\ell(\pi_1) + \ell(\pi_2) - \ell(\pi_1|K) - 1 = 1$. We consider three cases.

\smallskip

(i) $\ell(\pi_1)=1$: In this case $\ell(\pi_1|K)=1$ so we must have $\ell(\pi_2)=2$; say $\pi_2 = S|T$. Then we have $\tau=(K;[n],S|T)$, and for this triple to be harmonic we must have $K=T$. Therefore $\tau = (T; [n],S|T)$. The corresponding ray of the harmonic fan is $\e_{[n]} + \f_T$.

\smallskip

(ii) $\ell(\pi_2)=1$: Similarly we obtain $\tau = (T; S|T, [n])$. The corresponding ray of the harmonic fan is $\e_S + \f_{[n]}$.

\smallskip

(iii) $\ell(\pi_1) >1$ and $\ell(\pi_2) > 1$: Since $(\ell(\pi_1) - \ell(\pi_1|K)) + (\ell(\pi_2) - 2) = 0$ and both summands are nonnegative, we must have $\ell(\pi_2)=2$ and $\ell(\pi_1) = \ell(\pi_1|K)$. Similarly $\ell(\pi_1)=2$ and $\ell(\pi_2) = \ell(\pi_2|K)$. Let us write
\[
\pi_1 = S|S', \qquad \pi_2 = T|T', \qquad \text{and} \qquad \pi_1|K = K_S|K_{S'}, \qquad \pi_2|K = K_T|K_{T'}.
\]
Since $K_S|K_{S'} = K_{T'}|K_T$, an element $j \in S' - K_{S'}$ would contradict Definition \ref{def:harmonictriples}.1(b), so we must have $S' = K_{S'} = K_T$. Similarly $T' = K_{T'} = K_S$. Then $K = K_S \cup K_{S'} = S' \cup T'$. Also $S' \cap T' = K_S \cap K_{S'} = \emptyset$, so $S \cup T = [n]$. Thus
\[
\tau = ([n]-(S \cap T); \, S|([n]-S), \, T|([n]-T)) \qquad \text{ for } S \cup T = [n]. 
\]
The corresponding ray of the normal fan is $\e_S + \f_T$.

We conclude that the rays of the harmonic fan are the vectors $\e_S + \f_T$ where $S$ and $T$ are non-empty, they are not both equal to $[n]$, and $S \cup T = [n]$. There are $3^n-3$ such vectors because we can choose freely, for each $i \in [n]$, whether (a) $i$ is in $S$ and not $T$, (b) $i$ is in $T$ and not $S$, or (c) $i$ is in both $S$ and $T$; but the three pairs $([n], \emptyset), (\emptyset, [n])$ and $([n],[n])$ are invalid.
\end{proof}

As introduced in \cite{ADH1}, a \emph{bisubset} of $[n]$ is a pair $S|T$ of nonempty subsets of $[n]$, not both equal to $[n]$, such that $S \cup T = [n]$. The previous proof shows that they are in correspondence with the facets of $H_{n,n}$. More precisely, we have:

\begin{proposition}\label{prop:inequalities}
The harmonic polytope $H_{n,n}$ is given by the following minimal inequality description:
\begin{eqnarray*}
\sum_{e \in [n]} x_e &=& \frac{n(n+1)}2 + 1, \\
\sum_{e \in [n]} y_e &=& \frac{n(n+1)}2 + 1, \\
\sum_{s \in S} x_s + \sum_{t \in T} y_t &\geq& \frac{|S|(|S|+1) + |T|(|T|+1)}2  + 1 \qquad \text{for each bisubset $S|T$ of $[n]$}.
\end{eqnarray*}
\end{proposition}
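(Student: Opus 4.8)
The plan is to derive the inequality description directly from the Minkowski sum $H_{n,n} = D_n + (\Pi_n \times \Pi_n)$ together with the facet enumeration already carried out in the proof of Proposition \ref{prop:facets}. Recall that for any polytope $P$, a linear functional $w$ and the associated facet are governed by the support function $h_P(w) = \max_{x \in P} w(x)$, and that $h_{P+Q} = h_P + h_Q$. Moreover, the facet normals of $H_{n,n}$ are exactly the rays of its normal fan, which the proof of Proposition \ref{prop:facets} identified as the vectors $\e_S + \f_T$ ranging over bisubsets $S|T$ of $[n]$, i.e. nonempty $S, T \subseteq [n]$, not both $[n]$, with $S \cup T = [n]$. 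Since we are using inner normal fans, the facet inequality associated to the ray $\e_S + \f_T$ reads $\sum_{s \in S} x_s + \sum_{t \in T} y_t \geq m_{S,T}$, where $m_{S,T}$ is the minimum of this functional over $H_{n,n}$.

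First I would compute $m_{S,T}$ as a sum of three minima. Over $D_n = \conv\{\e_i + \f_i : i \in [n]\}$, the functional $\sum_{s\in S} x_s + \sum_{t \in T} y_t$ evaluated at $\e_i + \f_i$ equals $[i \in S] + [i \in T]$; since $S \cup T = [n]$, this is $1$ when $i \in S \triangle T$ and $2$ when $i \in S \cap T$. As $S \triangle T$ is nonempty precisely unless $S = T = [n]$ (which is excluded), the minimum over $D_n$ is $1$. Over $\Pi_n \times 0$, minimizing $\sum_{s \in S} x_s$ over the permutohedron puts the $|S|$ smallest values $1, 2, \dots, |S|$ on the coordinates in $S$, giving $\binom{|S|+1}{2}$; symmetrically the minimum over $0 \times \Pi_n$ of $\sum_{t \in T} y_t$ is $\binom{|T|+1}{2}$. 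Adding, $m_{S,T} = \binom{|S|+1}{2} + \binom{|T|+1}{2} + 1 = \frac{|S|(|S|+1) + |T|(|T|+1)}{2} + 1$, which is the claimed right-hand side. The two equalities come from the lineality directions $\e_{[n]}$ and $\f_{[n]}$ of the normal fan: evaluating $\sum_e x_e$ on $H_{n,n}$ is constant, equal to (value on $D_n$) $+$ (value on $\Pi_n$) $= 1 + \binom{n+1}{2}$, and likewise for $\sum_e y_e$.

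To finish, I would argue that this list is irredundant, i.e. the description is minimal. The two hyperplane equalities are forced by the $2$-dimensional lineality space $\R\{\e_{[n]}, \f_{[n]}\}$ of the normal fan (equivalently, $H_{n,n}$ lies in, and is full-dimensional within, the codimension-$2$ affine subspace cut out by them). For the inequalities, Proposition \ref{prop:facets} shows there are exactly $3^n - 3$ facets, and the proof of that proposition matches each facet with a distinct bisubset $S|T$ via its ray $\e_S + \f_T$; since distinct bisubsets give linearly independent (indeed, distinct primitive) normal vectors $\e_S + \f_T$, none of the $3^n-3$ inequalities is implied by the others together with the equalities, so the description is minimal.

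The main obstacle is bookkeeping rather than conceptual depth: one must be careful that the inner-normal-fan convention flips maxima to minima consistently, that the ray $\e_S + \f_T$ really is the inner normal to the facet $\{\sum_{s\in S} x_s + \sum_{t\in T} y_t = m_{S,T}\}$ (this is exactly what the case analysis in the proof of Proposition \ref{prop:facets} establishes, so I would invoke it rather than redo it), and that the support-function computation over $\Pi_n$ uses the correct extreme—assigning the \emph{smallest} permutation values to the coordinates being summed, since we minimize. Once those conventions are pinned down the computation of $m_{S,T}$ is a one-line evaluation of $h_{D_n} + h_{\Pi_n} + h_{\Pi_n}$ on each ray.
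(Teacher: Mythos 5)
Your proposal is correct and follows essentially the same route as the paper: identify the facet normals as the rays $\e_S+\f_T$ from the proof of Proposition \ref{prop:facets}, use additivity of the support function over the Minkowski sum $D_n+(\Pi_n\times\Pi_n)$ to get the right-hand sides $1+\binom{|S|+1}{2}+\binom{|T|+1}{2}$, and read off the two equalities from the lineality directions. One small caveat: your parenthetical claim that the $3^n-3$ normals $\e_S+\f_T$ are linearly independent is false (there are far too many of them), but it is also unnecessary --- minimality follows simply because each such vector spans a ray of the normal fan and hence each inequality is facet-defining, which is exactly what you (and the paper) invoke from Proposition \ref{prop:facets}.
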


\begin{proof}
The first two equations hold, and determine a codimension two subspace perpendicular to the lineality space $\R\{\e_{[n]}, \f_{[n]}\}$ of $\NN(H_{n,n})$. The minimal inequality description is then determined by the rays $\e_S + \f_T$ for the bisubsets $S|T$. We have 
\begin{eqnarray*}
\min_{(x,y) \in H_{n,n}}(\e_S+\f_T)(x,y) &=& 
\min_{(x,y) \in D_{n}}(\e_S+\f_T)(x,y) + 
\min_{(x,y) \in \Pi_{n} \times 0}\e_S(x,0) + 
\min_{(x,y) \in 0 \times \Pi_{n}}\f_T(0,y)  \\
&=& 1 + (1+2+\cdots + |S|) + (1+2+\cdots+|T|),
\end{eqnarray*}
which implies the given description.
\end{proof}

We now offer an alternative description of the faces of the harmonic polytope, which gives rise to a formula for the $f$-vector.
A \emph{harmonic table} $\T$ on $[n]$ of size $\ell =: \ell(\T)$ is a triangular table having $2\ell+1$ rows and $2\ell+1$ columns of lengths $2\ell+1, 2\ell-1, 2\ell-1, \ldots, 3, 3, 1, 1$, respectively, decorated with the following data:

\noindent $\bullet$
A labeling of the even columns with nonempty, pairwise disjoint subsets of $[n]$. 

\noindent $\bullet$
A labeling of the even rows with the same subsets, listed in the opposite order.

\noindent $\bullet$
A placement of each element of $[n]$ not used as a row or column label in one box of the table.

We let $c_i(\T)$ and $r_i(\T)$ denote the number of elements in column $2i+1$ and row $2i+1$ of $\T$, respectively. Figure \ref{fig:harmonictable} shows a harmonic table of size $3$ on $[9]$, with $c_1(\T) = 2$, $r_1(\T)=3$, $r_2(\T) = 1$, and all other $c_i(\T)$ and $r_i(\T)$ equal to $0$.

\begin{figure}[h]
\centering
	\begin{tikzpicture}
	\draw (0,0)--(.5,0)--(.5,1)--(1.5,1)--(1.5,2)--(2.5,2)--(2.5,3)--(3.5,3)--(3.5,3.5)--(0,3.5)--(0,0)
		(.5,1)--(.5,3.5)
		(1,1)--(1,3.5)
		(1.5,2)--(1.5,3.5)
		(2,2)--(2,3.5)
		(2.5,3)--(2.5,3.5)
		(3,3)--(3,3.5)
		(0,.5)--(.5,.5)
		(0,1)--(.5,1)
		(0,1.5)--(1.5,1.5)
		(0,2)--(1.5,2)
		(0,2.5)--(2.5,2.5)
		(0,3)--(2.5,3)
		(1.25,1.25) node {8}
		(1.25,1.75) node {2}
		(.75,2.25) node {5}
		(1.75,2.25) node {19}
		(.75,3.75) node {\textbf{4}}
		(1.75,3.75) node {\textbf{37}}
		(2.75,3.75) node {\textbf{6}}
		(-.25,.75) node {\textbf{4}}
		(-.35,1.75) node {\textbf{37}}
		(-.25,2.75) node {\textbf{6}}
	;
\end{tikzpicture}
\caption{\label{fig:harmonictable} A harmonic table of size $3$ on $[9]$.}
\end{figure}

Let $F(m)$ be the $m$th \emph{Fubini number} (or \emph{ordered Bell number}), which counts the ordered set partitions of $[m]$. Also recall that the \emph{Stirling number of the second kind} $S(m,p)$ counts the number of unordered set partitions of $m$ into $p$ parts.

\begin{proposition}\label{prop:fvector}
The number of faces of the harmonic polytope $H_{n,n}$ is
\[
f(H_{n,n}) = \sum_{\T} \, \prod_{i=0}^{\ell(\T)} \Big(F(c_i(\T)) F(r_i(\T)) \Big)
\]
summing over all harmonic tables on $[n]$. The number of $d$-dimensional faces of $H_{n,n}$ is
\[
f_d(H_{n,n}) = \sum_{\T} \, \sum_{\mathsf{a}, \mathsf{b}} \,  \prod_{i=0}^{\ell(\T)} \Big(S(c_i(\T),a_i) \, a_i! \, S(r_i(\T),b_i) \, b_i! \Big)
\]
summing over all harmonic tables $\T$ on $[n]$ and all sequences $\mathsf{a}=(a_0, a_1, \ldots, a_\ell)$ and $\mathsf{b} = (b_0, b_1, \ldots, b_\ell)$ with $\ell=\ell(\T)$ and $\sum_i a_i + \sum_i b_i + \ell= 2n-d-1$.
\end{proposition}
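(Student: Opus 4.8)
The plan is to set up an explicit bijection between harmonic triples $\tau=(K;\pi_1,\pi_2)$ and pairs consisting of a harmonic table $\T$ together with a choice of ordered set partition of each of its ``free'' column-sets and row-sets. The key observation is that a harmonic triple encodes its combinatorial data in exactly the shape of a harmonic table: the blocks of $\pi_1|K=\pi_2|K^{\mathrm{op}}$ determine an odd length $2\ell+1$, and the even columns/rows get labeled by these blocks of $K$ in opposite orders (matching the ``opposite'' condition in Definition \ref{def:harmonictriples}.1(a)). Each element $j\notin K$ must, by condition (b), occupy a position in $\pi_1$ relative to the $K$-blocks and a position in $\pi_2$ relative to the $K$-blocks that are ``complementary''; the set of legal $(\pi_1\text{-slot},\pi_2\text{-slot})$ pairs is precisely the set of boxes of the triangular table (the staircase shape forces the complementarity). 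Thus, forgetting the internal ordering of elements sharing a box, a harmonic triple determines a harmonic table $\T$, where $c_i(\T)$ counts the free elements placed in ``column slot $i$'' and $r_i(\T)$ counts those in ``row slot $i$.''

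First I would make this precise: given $\tau$, define $\ell=\ell(\pi_1|K)-1$ (equivalently the number of even columns), place the $s$th block of $\pi_1|K$ as the label of column $2s-1$-ish and of the appropriately-indexed row, and for each $j\notin K$ read off which ``gap'' of $\pi_1$ (among the $2\ell+1$ positions: before all $K$-blocks, inside a $K$-block, or strictly between two consecutive $K$-blocks) and which gap of $\pi_2$ it falls into; condition (b) together with the opposite-ordering of $\pi_1|K$ and $\pi_2|K$ forces the resulting pair of gaps to be a valid box of the staircase. Conversely, given a harmonic table and, for each $i$, an ordered set partition of the $c_i(\T)$ elements in column-slot $i$ and of the $r_i(\T)$ elements in row-slot $i$, one reconstructs $\pi_1$ (by refining each column-slot according to its ordered set partition and shuffling in the row-slot elements that belong there in the forced way) and likewise $\pi_2$. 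The two constructions are mutually inverse. Under this bijection, the first formula follows immediately: the number of ways to put an ordered set partition on a set of size $c_i(\T)$ is the Fubini number $F(c_i(\T))$, and independently $F(r_i(\T))$ for the rows, so the fiber over $\T$ has size $\prod_i F(c_i(\T))F(r_i(\T))$.

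For the $f$-vector refinement I would track the dimension of the face through the bijection. By Proposition \ref{prop:faces}.2 the face of $\tau$ has dimension $\ell(\pi_1)+\ell(\pi_2)-\ell(\pi_1|K)-1$. Writing $\ell(\pi_1|K)=\ell+1$ and noting that $\ell(\pi_1)$ equals $(\ell+1)$ (the $K$-blocks) plus the total number of \emph{extra} blocks created by refining the column-slots — if in slot $i$ the $c_i(\T)$ elements are partitioned into an ordered set partition with $a_i$ blocks that merge/insert among the existing structure, the bookkeeping gives $\ell(\pi_1)=\ell+1+\sum_i a_i - (\text{overlaps})$; I would choose the indexing of slots so that this cleanly reads $\ell(\pi_1)-\ell(\pi_1|K)=\sum_i a_i$ and similarly $\ell(\pi_2)-\ell(\pi_2|K)=\sum_i b_i$, whence the dimension is $\sum_i a_i+\sum_i b_i+\ell-1+\ell+1-(\ell+1)-1 = \sum_i a_i+\sum_i b_i+\ell-1$, i.e.\ $2n-d-1$ rearranges to $\sum a_i+\sum b_i+\ell$ after accounting for the $|K|$ and placed elements — I will verify the exact constant by checking $n=2$ against Figure \ref{fig:hexagon}. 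Then, refining by the sequences $\mathsf a,\mathsf b$, the number of \emph{ordered} set partitions of a $c_i(\T)$-set into $a_i$ blocks is $S(c_i(\T),a_i)\,a_i!$, and multiplying over all $i$ and summing over $\T$, $\mathsf a$, $\mathsf b$ with the dimension constraint yields the stated $f_d$ formula (summing over all $d$ recovers the first formula since $\sum_{a}S(m,a)a! = F(m)$).

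\textbf{Main obstacle.} The genuinely delicate part is the combinatorial bookkeeping in the dimension count: making the slot-indexing of the harmonic table match up so that ``number of blocks of $\pi_1$ minus number of blocks of $\pi_1|K$'' equals exactly $\sum_i a_i$ with no off-by-one corrections coming from the slots that lie \emph{inside} a $K$-block versus strictly \emph{between} $K$-blocks (an element placed in a between-$K$-blocks slot creates a genuinely new block of $\pi_1$, whereas one placed inside a $K$-block does not — yet in the table both are ``column-slot'' boxes). Getting the correspondence between table boxes and $(\pi_1,\pi_2)$-gap-pairs exactly right, and confirming that every box of the triangular staircase (and only those) corresponds to a condition-(b)-legal pair of gaps, is where I would spend the most care; once that is nailed down, both formulas drop out of the product-over-slots structure.
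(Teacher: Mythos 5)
Your route is the paper's: encode a harmonic triple $(K;\pi_1,\pi_2)$ by a harmonic table (size given by the blocks of $\pi_1|K$, boxes given by the pair of gaps each $j\notin K$ occupies, legality of boxes coming from condition 1(b)), and then count the orderings of the free elements in each odd column and odd row by Fubini numbers, refined by Stirling numbers for the $f$-vector. For the first formula your argument is essentially the paper's and is fine.

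The genuine gap is in the dimension constraint for the second formula, and it is not where you think it is. Proposition \ref{prop:faces}.2 gives the dimension of the \emph{cone} of the harmonic fan labelled by $\tau$, and that fan lives in $\N_n\times\N_n$, which has dimension $2n-2$; since the face poset of the normal fan is the reverse of the face poset of the polytope, a cone of dimension $c$ corresponds to a face of $H_{n,n}$ of dimension $d=2n-2-c$. This complementation is the entire source of the ``$2n-d-1$''; it has nothing to do with ``accounting for $|K|$ and placed elements,'' and deferring the constant to a check of $n=2$ is not a proof. Concretely, with the correct conventions one has $\ell(\T)=\ell(\pi_1|K)$ (the table of size $\ell$ has exactly $\ell$ labelled columns, so your $\ell=\ell(\pi_1|K)-1$ is an off-by-one), and only elements placed in \emph{odd} columns create new blocks of $\pi_1$ (those in even columns are absorbed into the $K$-blocks), so $\ell(\pi_1)=\ell(\T)+\sum_i a_i$, $\ell(\pi_2)=\ell(\T)+\sum_i b_i$, $\ell(\pi_1|K)=\ell(\T)$. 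Hence the cone dimension is $c=\ell+\sum_i a_i+\sum_i b_i-1$, and $d=2n-2-c$ gives exactly $\sum_i a_i+\sum_i b_i+\ell=2n-d-1$, with no leftover corrections to reconcile. Once you replace your hand-waved bookkeeping by this two-line computation (and fix the indexing of $\ell$), the rest of your proposal goes through as written.
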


\begin{proof}
A harmonic triple $T=(K; \pi_1, \pi_2)$ can be constructed in five steps, with the help of a harmonic table, as follows.
This process is illustrated in Figure \ref{fig:harmonictable}, which shows the harmonic table that gives rise to the harmonic triple $T=(\textbf{3467}, \mathbf{4}5|8|2|1\mathbf{37}9|\mathbf{6}, \mathbf{6}|1|59|2\mathbf{3}\mathbf{7}|8|\mathbf{4})$ of Example \ref{ex:triples}.

\smallskip

1. Choose the subset $K$ of $[n]$.

\smallskip

2. Choose the ordered set partition $\pi_1|K =: K_1 |\cdots | K_\ell$. This automatically determines $\pi_2|K$, which is its reverse. Record this data on a triangular table $\T$ of size $\ell$, labeling the $2i$th column from left to right and the $2i$th row from bottom to top with the set $K_i$.

\smallskip

3. Choose, for each element $j \in J := [n]-K$, its position relative to $K_1, \ldots, K_\ell$ in the ordered set partition $\pi_1$,  and its position relative to $K_1, \ldots, K_\ell$ in the ordered set partition $\pi_2$. Record this information in the table $\T$ as follows. 
If $j$ is in the same block as $K_i$ in $\pi_1$ (resp. in $\pi_2$), put it in the column (resp. row)  labeled by $K_i$ in $\T$.
If $j$ is between blocks $K_i$ and $K_{i+1}$ in $\pi_1$ (resp. in $\pi_2$), put it in the unlabeled column (rep. row) between the columns (resp. rows) labeled by $K_i$ and $K_{i+1}$ in $\T$.
Notice that, by item 1(b) in the Definition \ref{def:harmonictriples} of a harmonic triple, all these numbers will land inside the triangular table.

\smallskip

4. Choose the relative order of the elements of $J = [n]-K$ in the ordered set partition $\pi_1$. To do this, it suffices to choose, for each $i$, the relative order of the elements of $J$ that appear between blocks $K_i$ and $K_{i+1}$ of $\pi_1$ for each $i$. These are precisely the $c_i(\T)$ elements in column $2i+1$, and their order is given by an arbitrary ordered set partition of that size, so there are $F(c_1(\T)) \cdots F(c_\ell(\T))$ such choices.

\smallskip

5. Choose the relative order of the elements of $J = [n]-K$ in the ordered set partition $\pi_2$. As in step 4, there are $F(r_1(\T)) \cdots F(r_\ell(\T))$ such choices.

\smallskip

Each harmonic triple on $[n]$ -- and hence each face of the harmonic polytope $H_{n,n}$ -- arises in a unique way from this procedure. This proves the first formula.

\medskip

By Proposition \ref{prop:faces}.2, the $d$-dimensional faces of the harmonic polytope $H_{n,n}$ dimension $d$ correspond to the harmonic triples $(\tau; \pi_1, \pi_2)$ with $d = \ell(\pi) + \ell(\pi_2) - \ell(\pi_1|K) - 1$.
For the harmonic table $\T$ given by $(\tau; \pi_1, \pi_2)$, let $a_i$ (resp. $b_i$) denote the length of the ordered set partition of the $c_i(\T)$ elements in column $2i+1$ (resp. the $r_i(\T)$ elements in column $2i+1$) has length $a_i$ (resp. $b_i$), for $i=1,\ldots,\ell(\T)$.
Then $\ell(\pi_1) = \ell(\T) + \sum_i a_i$ and $\ell(\pi_2) = \ell(\T) + \sum_i b_i$, and $\ell(\pi_1|K) = \ell(\T)$, so $d = \ell(\T) + \sum_i a_i + \sum_i b_i -1$. There are $S(c_i(\T),a_i)\, a_i!$ (resp.  $S(r_i(\T),b_i)\, b_i!$) such ordered set partitions for each $i$, from which the result follows.
\end{proof}

For fixed $k$ and $\ell$, there are ${n \choose k}$ choices for a set $K$ of $k$ elements, there are $\ell! \, S(k,\ell)$ choices for an ordered set partition $K_1|\cdots|K_\ell$ of $K$ of size $\ell$, and there are $(2\ell+1) + 2(2\ell-1) + \cdots + 2(3) + 2(1) = 2 \ell^2 + 2 \ell + 1$ choices for where to place each element not in $K$ in the harmonic table. Therefore the number of harmonic tables for $[n]$ is
\[
\sum_{k=1}^{n-1} \sum_{\ell = 1}^k {n \choose k} S(k, \ell) \, \ell! \, (2 \ell^2 + 2 \ell + 1)^{n-k}.
\]
Using Proposition \ref{prop:fvector} one can compute the $f$-vector of the first few harmonic polytopes:
\begin{eqnarray*}
f(H_{1,1}) &=& (1,1), \\
f(H_{2,2}) &=& (1,6,6,1), \\
f(H_{3,3}) &=& (1,66,144,102,24,1), \\
f(H_{4,4}) &=& (1,1200,4008,5124,3072,834,78,1).
\end{eqnarray*}

\subsection{The harmonic polytope and the bipermutohedron}

As stated in the introduction, the harmonic polytope is one of two polytopes that arose in Ardila, Denham, and Huh's work on the Lagrangian geometry of matroids. The other one is the \emph{bipermutohedron}. We now describe the combinatorial relationship between them. Only for this subsection, we assume familiarity with the construction of the bipermutohedral fan $\Sigma_{n,n}$ and the bipermutahedron $\Pi_{n,n}$ in \cite[Section 2]{ADH1}.

We have shown that the harmonic polytope has $3^n-3$ facets and $(n!)^2(1+\frac12+ \cdots + \frac1n)$ vertices. In turn, the bipermutohedron has $3^n-3$ facets and $(2n)!/2^n$ vertices.
The harmonic polytope is a Minkowski summand of (a multiple of) the bipermutohedron, as shown by the following proposition, originally discovered in \cite[Proposition 2.11]{ADH1}. 
We give an alternative proof that makes the combinatorial relationship between these objects more explicit.

\begin{proposition}
The harmonic fan is a coarsening of the bipermutohedral fan.
\end{proposition}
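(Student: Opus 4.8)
The plan is to use the description of $\mathcal{N}(H_{n,n})$ recorded just after Definition~\ref{def:polytope}, namely that it is the coarsest common refinement of $\mathcal{N}(D_n)$ and $\mathcal{N}(\Pi_n\times\Pi_n)$. Since any complete fan that refines two given fans also refines their coarsest common refinement, it suffices to prove that the bipermutohedral fan $\Sigma_{n,n}=\mathcal{N}(\Pi_{n,n})$ refines each of $\mathcal{N}(D_n)$ and $\mathcal{N}(\Pi_n\times\Pi_n)$; this yields that $\Sigma_{n,n}$ refines $\mathcal{N}(H_{n,n})$, which is exactly the assertion that the harmonic fan is a coarsening of the bipermutohedral fan. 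Moreover, for complete fans it is enough to check that every maximal cone of the finer fan lies inside a cone of the coarser one, since every cone is a face of a maximal one. So the whole statement reduces to the pointwise claim: for every $(z,w)\in\N_n\times\N_n$, the harmonic triple $\tau(z,w)=(K;\pi_1,\pi_2)$ associated to $(z,w)$ as in the proof of Proposition~\ref{prop:faces} is constant on the chamber of $\Sigma_{n,n}$ containing $(z,w)$.

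To prove this I would recall from \cite[Section~2]{ADH1} the combinatorial labels of the faces of $\Sigma_{n,n}$ and extract from each label the three pieces of data appearing in a harmonic triple. First, a chamber of $\Sigma_{n,n}$ records the linear order of $z_1,\dots,z_n$ and the linear order of $w_1,\dots,w_n$: this is precisely the statement that $\Sigma_{n,n}$ refines $\Sigma_n\times\Sigma_n=\mathcal{N}(\Pi_n\times\Pi_n)$, and it follows readily from the construction of $\Pi_{n,n}$ in \cite{ADH1}, whose projection onto each of the two copies of $\R^n$ recovers (a dilate of) the permutohedron $\Pi_n$. Hence $\pi_1$ and $\pi_2$ depend only on the chamber. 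Second, a chamber of $\Sigma_{n,n}$ records the set $K=\{k:\ z_k+w_k=\min_i(z_i+w_i)\}$ of minimizers of $z_i+w_i$: this is the statement that $\Sigma_{n,n}$ refines the normal fan $\mathcal{N}(D_n)$ of the simplex $D_n$, whose maximal cones are the $\mathscr{C}_k$. Granting these two facts, $\tau(z,w)=(K;\pi_1,\pi_2)$ is a function of the $\Sigma_{n,n}$-chamber, and the coarsening is witnessed by the explicit, surjective, order-compatible map of face posets that sends a face of $\Sigma_{n,n}$ to the harmonic triple it determines; checking that this image is always a genuine harmonic triple and that every harmonic triple is hit (so that we land on all faces of $\mathcal{N}(H_{n,n})$ by Proposition~\ref{prop:faces}) is a routine unwinding of Definition~\ref{def:harmonictriples} against the labels in \cite{ADH1}.

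The main obstacle is the second fact: showing, from \cite[Section~2]{ADH1}, that a bipermutohedral chamber remembers which indices minimize $z_i+w_i$. This is the only place where the fine structure of $\Sigma_{n,n}$ is used, and it must be handled with care, because what is true is a refinement statement about the simplex $D_n$ and \emph{not} about the full braid fan on the coordinates $z_i+w_i$ -- a bipermutohedral chamber does not remember the complete order of the $z_i+w_i$, which is exactly why the harmonic and bipermutohedral fans already differ at $n=3$. I expect the cleanest route is to exhibit $D_n$ (up to translation and dilation) as a weak Minkowski summand of $\Pi_{n,n}$ directly from the inequality or Minkowski-sum description of the bipermutohedron in \cite{ADH1}: comparing, for a direction $(z,w)$, the face of $\Pi_{n,n}$ it selects with the face $(D_n)_{(z,w)}$ it selects should show that each normal cone of $D_n$ is a union of normal cones of $\Pi_{n,n}$. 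Alternatively, one can argue combinatorially, verifying that the map ``forget everything in a bipermutohedral label except the minimizer set $K$'' is compatible with the wall-crossing relations of $\Sigma_{n,n}$. Everything after this point is bookkeeping.
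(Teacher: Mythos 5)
Your reduction — it suffices to show that the bipermutohedral fan refines both $\NN(D_n)$ and $\NN(\Pi_n\times\Pi_n)$, hence their coarsest common refinement, which is the harmonic fan — is sound, and in substance it is the same route the paper takes (the paper phrases it pointwise: the bisequence $\B$ labelling a bipermutohedral cone already determines the harmonic triple of any interior point). The genuine gap is that the two refinement claims are where all the content lies, and you establish neither. The claim you flag as ``the main obstacle'' (that a chamber of $\Sigma_{n,n}$ remembers the minimizer set $K$ of the $z_i+w_i$) is left as an expectation with two proposed routes, neither carried out; as written this is a plan, not a proof. Moreover, the detour you suggest is unnecessary: in the construction of $\Sigma_{n,n}$ in \cite[Section 2]{ADH1}, the cone of a bisequence $\B$ consists precisely of the points $(z,w)$ for which $z_i+w_i$ is minimized exactly at the indices occurring only once in $\B$, and \cite[Proposition 2.9]{ADH1} states that the order of first occurrences (resp.\ the reversed order of second occurrences) in $\B$ records the reverse order of the $z_i$ (resp.\ the order of the $w_i$). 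These two facts are exactly what the paper quotes, and with them the assignment $\B\mapsto(K;\pi_1,\pi_2)$ is immediate; the ``bookkeeping'' you defer is the whole argument.

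Separately, the justification you do offer for the first claim is not valid as stated: knowing that $\Pi_{n,n}$ projects onto (a dilate of) $\Pi_n$ in each factor does not imply that $\Sigma_{n,n}$ refines $\Sigma_n\times\Sigma_n$. A projection only controls the restriction of the normal fan to the corresponding dual subspace, not refinement of the full fan; for example, the segment from $(0,0)$ to $(1,1)$ in the plane projects onto the segment from $(0,0)$ to $(1,0)$, yet its normal fan (two half-planes separated by $z+w=0$) does not refine that of its image (two half-planes separated by $z=0$). What you actually need is the weak Minkowski summand statement, i.e.\ that the orders of the $z_i$ and of the $w_i$ are constant on each bipermutohedral chamber, and again this comes from \cite[Proposition 2.9]{ADH1} rather than from a projection argument. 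Finally, the surjectivity check you propose (``every harmonic triple is hit'') is not needed for a coarsening statement: it suffices that every cone of $\Sigma_{n,n}$ lands inside some cone of the harmonic fan, and Proposition \ref{prop:faces} already guarantees that the triple attached to any point labels such a cone.
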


\begin{proof}
Suppose a point $(z,w) \in \N_n \times \N_n$ is in the interior of cone $\sigma_{\B}$ of the bipermutohedral fan $\Sigma_{n,n}$, corresponding to a bisequence $\B$. Then $z_k+w_k$ is minimized precisely for the set $K$ of indices $k \in [n]$ that appear only once in $\B$. This places the point $(z,w)$ in the chart $\mathscr{C}_k$ of the bipermutohedral fan for each $k \in K$. Fix one such $k$.

Now, as explained in \cite[Proposition 2.9]{ADH1}, the order of the first occurrences of each $i \in [n]$ in the bisequence $\B$ is determined by the reverse order of the numbers $Z_i = z_i - z_k$, which is the reverse order of the $z_i$s. 
Similarly, the order of the second occurrences of each $i \in [n]$ in the bisequence $\B$ is determined by the reverse order of the numbers $W_i = w_k-w_i$, which is the order of the $w_i$s.

We conclude that $(z,w)$ is in the interior of the cone of the harmonic fan  indexed by the harmonic triple $(K; \pi_1, \pi_2)$ where $K$ is the set of elements of $[n]$ appearing only once in $\B$, 
$\pi_1$ is the ordered set partition obtained from the order of the first occurrence of each $i$ in $\B$, and
$\pi_2$ is the ordered set partition obtained by reversing the order of the second  occurrence of each $i$ in $\B$.
\end{proof}

For example, if $(z,w)$ is in the interior of the cone of the bipermutohedral fan $\Sigma_{6,6}$ indexed by the bisequence $\B=34|2|356|1|247|6$, then $(z,w)$ is in the interior of the cone of the harmonic fan $\NN(H_{6,6})$ indexed by the harmonic triple $\tau=(\textbf{157}; \, 34|2|\textbf{5}6|\textbf{1}|\textbf{7}, \, 6|24\textbf{7}|\textbf{1}|3\textbf{5})$.

\section{The volume of $H_{n,n}$} \label{sec:volume}

The goal of this section is to compute the volume of the harmonic polytope. As stated in the introduction, computing the volume of an arbitrary polytope is a very difficult task; we need to use in an essential way the combinatorial structure of our polytope. Our computation relies on the theory of mixed volumes and the Bernstein-Khovanskii-Kushnirenko Theorem which relates these volumes to the enumeration of solutions of systems of polynomial equations. It also relies on the theory of toric edge ideals to enumerate those solutions.
Before reviewing the basics of this theory, let us comment on the definition of volume used here.

\bigskip

\noindent \textbf{\textsf{Normalizing the volume.}}
Most of the polytopes that we study are not full-dimensional in their ambient space, and we need to define their volumes and mixed volumes carefully. Let $P$ be a $d$-dimensional polytope on an affine $d$-plane $L \subset \Z^n$. Assume $L \cap \Z^n$ is a lattice translate of a $d$-dimensional lattice $\Lambda$. We call a lattice $d$-parallelotope in $L$ \emph{primitive} if its edges generate the lattice $\Lambda$; all primitive parallelotopes have the same volume. Then we define the \emph{normalized volume} of a $d$-polytope $P$ in $L$ to be $\vol(P) := \textsf{EVol}(P)/\textsf{EVol}(\square)$ for any primitive parallelotope $\square$ in $L$, where $\textsf{EVol}$ denotes Euclidean volume. By convention, the normalized volume of a point is $1$.
Throughout the paper, all volumes and mixed volumes are normalized in this way.

\subsection{Mixed volumes and Bernstein-Khovanskii-Kushnirenko's Theorem }
\label{subsec:volume}
  
\begin{theorem} (McMullen, \cite{McMullen})
There is a unique function $\mv(Q_1, \ldots, Q_d)$ defined on $d$-tuples of polytopes in $\R^d$, called the \emph{mixed volume} such that for any collection of polytopes $P_1, \ldots, P_m$ in $\R^d$ and any nonnegative real numbers $\lambda_1, \ldots, \lambda_m$, we have
\begin{equation}\label{eq: mixed vol def}
\vol(\lambda_1P_1+ \cdots + \lambda_m P_m) = \sum_{i_1, \ldots, i_d} \mv(P_{i_1}, \ldots, P_{i_d}) \lambda_{i_1} \cdots \lambda_{i_d},
\end{equation}
summing over all ordered $d$-tuples $(i_1, \ldots, i_d)$ with $1 \leq i_k \leq m$ for $1\leq k \leq d$.
Moreover, the function $\mv(Q_1, \ldots, Q_d)$ is symmetric; that is, $\mv(Q_1, \ldots, Q_d) = 
\mv(Q_{\sigma(1)}, \ldots, Q_{\sigma(d)})$ for any permutation $\sigma$ of $[d]$.
\end{theorem}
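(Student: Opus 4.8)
The plan is to deduce this from Minkowski's classical theorem: for any fixed polytopes $P_1,\dots,P_m\subseteq\R^d$, the function $V(\lambda_1,\dots,\lambda_m):=\vol(\lambda_1P_1+\cdots+\lambda_mP_m)$ on the nonnegative orthant $\R_{\ge0}^m$ is the restriction of a polynomial that is homogeneous of degree $d$. Granting this, the coefficients of that polynomial are forced, and reading them off produces the function $\mv$. Homogeneity of degree $d$ is immediate, since replacing every $\lambda_i$ by $t\lambda_i$ scales the Minkowski sum by $t$ and hence its volume by $t^d$; the content is polynomiality.

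To prove polynomiality I would induct on $d$, the cases $d\le1$ being clear. The structural input is that for $\lambda$ in the \emph{open} orthant the normal fan of $Q(\lambda):=\lambda_1P_1+\cdots+\lambda_mP_m$ equals the common refinement $\NN(P_1)\wedge\cdots\wedge\NN(P_m)$, independently of $\lambda$; so $Q(\lambda)$ has a fixed combinatorial type, its vertices (indexed by the maximal cones of the refinement) depend linearly on $\lambda$, and its facets are $F_k(\lambda)=\lambda_1(P_1)_{u_k}+\cdots+\lambda_m(P_m)_{u_k}$, one for each of the finitely many rays $u_k$ of the refinement. Writing $\vol(Q(\lambda))$ as the sum of the volumes of the pyramids, from a fixed interior point (say the barycenter of the vertices, which is linear in $\lambda$) over the facets $F_k(\lambda)$, each pyramid contributes $\frac1d\,\gamma_k\,\ell_k(\lambda)\,\vol_{d-1}(F_k(\lambda))$, where $\gamma_k$ is a fixed lattice-normalization constant and $\ell_k$ is linear in $\lambda$ (a signed distance to a hyperplane that moves linearly with $\lambda$). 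Since $F_k(\lambda)$ is itself a Minkowski combination of the fixed polytopes $(P_1)_{u_k},\dots,(P_m)_{u_k}$ inside a fixed affine $(d-1)$-plane, the inductive hypothesis gives that $\vol_{d-1}(F_k(\lambda))$ is homogeneous of degree $d-1$ in $\lambda$; hence $V$ agrees with a homogeneous degree-$d$ polynomial on $\R_{>0}^m$, and since two polynomials that agree on the nonempty open set $\R_{>0}^m$ coincide while $V$ is continuous, they agree on all of $\R_{\ge0}^m$. I expect this to be the main obstacle, as it is the only place where genuinely geometric input is needed, through the stability of the normal fan under Minkowski sums and the pyramid decomposition of volume. (The degenerate case where $Q(\lambda)$ is never full-dimensional is trivial, with $V\equiv0$.)

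With Minkowski's theorem in hand, the rest is formal. I would define, for any $d$-tuple of polytopes in $\R^d$,
\[
\mv(Q_1,\dots,Q_d):=\frac1{d!}\sum_{\emptyset\ne S\subseteq[d]}(-1)^{d-|S|}\,\vol\!\Big(\sum_{i\in S}Q_i\Big),
\]
equivalently $\frac1{d!}$ times the coefficient of $\lambda_1\cdots\lambda_d$ in $\vol(\lambda_1Q_1+\cdots+\lambda_dQ_d)$; this is visibly symmetric, as relabelling the $Q_i$ merely permutes the sets $S$. To check \eqref{eq: mixed vol def} I would write the polynomial $V$ above in its unique symmetric form $V(\lambda)=\sum_{(i_1,\dots,i_d)\in[m]^d}a_{i_1\cdots i_d}\lambda_{i_1}\cdots\lambda_{i_d}$ and apply the polarization identity for homogeneous polynomials, which expresses each coefficient as $a_{i_1\cdots i_d}=\frac1{d!}\sum_{S\subseteq[d]}(-1)^{d-|S|}\vol\big(\sum_{j\in S}P_{i_j}\big)=\mv(P_{i_1},\dots,P_{i_d})$, where the term $S=\emptyset$ drops out since $d\ge1$. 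For uniqueness: if $\mv'$ is any symmetric function satisfying \eqref{eq: mixed vol def}, then specializing that identity to $m=d$ and $P_i=Q_i$ and extracting the coefficient of the squarefree monomial $\lambda_1\cdots\lambda_d$ forces it to equal $d!\,\mv'(Q_1,\dots,Q_d)$; since that coefficient is intrinsic to $V$, it equals $d!\,\mv(Q_1,\dots,Q_d)$ as well, so $\mv'=\mv$. This last step is just the uniqueness of the symmetric polarization of a homogeneous polynomial.
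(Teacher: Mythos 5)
Your argument is correct, but it is genuinely different from what the paper does: the paper offers no proof at all, citing this as a classical result of McMullen, whereas you supply a self-contained proof of the classical statement. Your route is the standard one — first Minkowski's polynomiality theorem, proved by induction on $d$ using the facts that for $\lambda$ in the open orthant the normal fan of $\lambda_1P_1+\cdots+\lambda_mP_m$ is the fixed common refinement $\NN(P_1)\wedge\cdots\wedge\NN(P_m)$, that each facet is the corresponding Minkowski combination of faces, and that the pyramid decomposition over facets expresses the volume as a sum of (linear in $\lambda$) $\times$ ($(d-1)$-volume of a Minkowski combination of fixed polytopes); then existence, symmetry, and the identity \eqref{eq: mixed vol def} via the finite-difference polarization $\mv(Q_1,\ldots,Q_d)=\frac1{d!}\sum_{\emptyset\neq S\subseteq[d]}(-1)^{d-|S|}\vol\bigl(\sum_{i\in S}Q_i\bigr)$, and uniqueness by extracting the coefficient of $\lambda_1\cdots\lambda_d$, which symmetry pins down. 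What your approach buys is a complete elementary proof from first principles; what the paper's citation buys is brevity, since only the statement is used downstream. Two small points you should tighten if this were written out in full: the affine hyperplanes containing the facets translate as $\lambda$ varies, so you should say explicitly that translation-invariance of $(d-1)$-volume lets you apply the inductive hypothesis inside the fixed linear hyperplane $u_k^{\perp}$; and the passage from the open orthant to its closure needs the (standard, but citable) continuity of volume with respect to Hausdorff convergence of convex bodies, or alternatively a separate application of the argument to the subfamily of summands with $\lambda_i>0$. Also note that, as you implicitly recognize by quantifying over symmetric $\mv'$, uniqueness holds within symmetric functions — without the symmetry requirement only the symmetrization of $\mv$ is determined by \eqref{eq: mixed vol def} — which is the intended reading of the statement.
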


Mixed volumes have the following algebraic interpretation.

\begin{theorem}[Bernstein-Khovanskii-Kushnirenko Theorem]\cite{Berstein}\label{thm:BKK}
Let $A_1,\ldots,A_d\subset \Z^d$ be $d$ finite sets of lattice points, and let $Q_i=\conv(A_i)$ for $i=1,\ldots,d$.
If the number of solutions in the torus $(\C^*)^d$ to the system
	$$
	\begin{cases}
	\displaystyle \sum_{\alpha\in A_1} \lambda_{1,\alpha} x^\alpha=0,\\
	\qquad\qquad\quad\,\,\,  \vdots\\
	\displaystyle \sum_{\alpha\in A_d} \lambda_{d,\alpha} x^\alpha=0
	\end{cases}
	$$
is finite for a given choice of complex coefficients $\lambda_{i,\alpha}$, then that number is bounded above by $d! \,\mv(Q_1,\ldots,Q_d)$.
Moreover, if the coefficients $\lambda_{i,\alpha}$ are sufficiently generic, the number of solutions equals $d! \,\mv(Q_1,\ldots,Q_d)$.
\end{theorem}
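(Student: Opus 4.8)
The plan is to reinterpret the number of solutions in the torus as an intersection number on a toric variety, and then to convert that intersection number into a mixed volume by appealing to McMullen's theorem. Set $Q:=Q_1+\cdots+Q_d$ and let $X:=X_\Sigma$ be the complete normal toric variety whose fan $\Sigma$ is the normal fan of $Q$; then $X$ contains the torus $\T=(\C^*)^d$ as a dense open subset, and $\Sigma$ refines the normal fan of every $Q_i$. After translating each $Q_i$ by a lattice vector -- which multiplies $f_i$ by a monomial and hence does not change its zero set in $\T$ -- we may assume $0\in Q_i$, so that $Q_i$ corresponds to a torus-invariant, basepoint-free (nef) Cartier divisor $D_i$ on $X$ with $H^0(X,\mathcal{O}(D_i))=\bigoplus_{m\in Q_i\cap\Z^d}\C\,\chi^m$. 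Under this identification $f_i=\sum_{\alpha\in A_i}\lambda_{i,\alpha}x^\alpha$ is a global section of $\mathcal{O}(D_i)$, and its zero divisor $Z_i\in|\mathcal{O}(D_i)|$ cuts out on $\T$ exactly the hypersurface $\{f_i=0\}$.

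I would first handle the generic case. The key claim is that for generic coefficients $\lambda_{i,\alpha}$ the intersection $Z_1\cap\cdots\cap Z_d$ is zero-dimensional, reduced, and entirely contained in the torus $\T$; granting this, its cardinality equals the intersection number $D_1\cdots D_d$ in $X$. For the containment in $\T$ it suffices to show that $Z_1\cap\cdots\cap Z_d$ misses each of the finitely many torus-invariant prime divisors $D_\rho=V(\rho)$, $\rho$ a ray of $\Sigma$. Fix such a $\rho$; the Newton polytope of $f_i|_{D_\rho}$ is the face $(Q_i)_\rho$ of $Q_i$ in the direction $\rho$. If $(Q_i)_\rho$ is a single point for some $i$, then $f_i|_{D_\rho}$ is a nowhere-vanishing monomial and already $Z_i\cap D_\rho=\emptyset$. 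Otherwise every $(Q_i)_\rho$ has at least two lattice points, so each $\mathcal{O}(D_i)|_{D_\rho}$ is basepoint-free with a nonconstant generic section; since $\dim D_\rho=d-1<d$, iterated Bertini then forces the common zero locus of the $d$ generic sections $f_1|_{D_\rho},\ldots,f_d|_{D_\rho}$ on $D_\rho$ to be empty. A generic choice of the $\lambda_{i,\alpha}$ achieves this simultaneously for all rays $\rho$, and transversality inside $\T$ is the usual Bertini statement.

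Next I would evaluate $D_1\cdots D_d$. For nef toric divisors one has $D_1\cdots D_d=d!\,\mv(Q_1,\ldots,Q_d)$, and I would deduce this from McMullen's theorem as follows. For positive integers $\lambda_1,\ldots,\lambda_d$ the divisor $\lambda_1D_1+\cdots+\lambda_dD_d$ corresponds to the lattice polytope $\lambda_1Q_1+\cdots+\lambda_dQ_d$, and the top self-intersection of the nef divisor attached to a lattice polytope $P$ equals $d!\,\vol(P)$: for an ample class this is the statement that a projectively embedded toric variety has degree $d!$ times the normalized volume of its polytope, and the nef case follows by pulling back along the birational morphism $X\to X_{\mathcal{N}(P)}$ and using the projection formula. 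Hence $(\lambda_1D_1+\cdots+\lambda_dD_d)^d=d!\,\vol(\lambda_1Q_1+\cdots+\lambda_dQ_d)$; expanding the left-hand side multilinearly and the right-hand side by McMullen's identity, and comparing the coefficient of $\lambda_1\cdots\lambda_d$, gives $D_1\cdots D_d=d!\,\mv(Q_1,\ldots,Q_d)$. Combining with the previous paragraph, the number of solutions in $\T$ for generic coefficients is exactly $d!\,\mv(Q_1,\ldots,Q_d)$. For the upper bound under an arbitrary coefficient choice whose solution set in $\T$ is finite, I would invoke conservation of number: every isolated solution of a polynomial system persists -- contributing at least one nearby solution, counted with multiplicity -- under a small perturbation of the coefficients keeping the same supports, so the finite solution set of the given system injects with multiplicities into the solution set of a nearby generic system, whose cardinality is $d!\,\mv(Q_1,\ldots,Q_d)$; this yields the bound.

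The step I expect to be the main obstacle is the boundary-avoidance claim of the second paragraph: showing, uniformly over all rays of $\Sigma$, that generic coefficients keep the common solution set inside the dense torus, so that the naive B\'ezout-type count $D_1\cdots D_d$ equals the sharp count of torus solutions. Everything else is either a formal consequence of McMullen's theorem (the identity $D_1\cdots D_d=d!\,\mv$) or a standard specialization argument (the upper bound). A self-contained alternative, avoiding toric geometry, is Bernstein's original approach: deform the coefficients by $\lambda_{i,\alpha}\mapsto\lambda_{i,\alpha}\,t^{\ell_i(\alpha)}$ for generic linear functions $\ell_i$, track the solutions as Puiseux series in $t$ as $t\to0$, and show that they distribute among the facial subsystems indexed by the mixed cells of the induced coherent mixed subdivision of $Q$, each mixed cell of normalized volume $V$ accounting for exactly $V$ solutions; summing over the mixed cells recovers $d!\,\mv(Q_1,\ldots,Q_d)$. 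The analogous obstacle there is to guarantee that no solutions escape to infinity during the degeneration.
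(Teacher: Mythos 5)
The paper does not prove this statement: it is quoted as Bernstein's theorem and attributed to \cite{Berstein}, so there is no internal argument to compare yours against. Judged on its own terms, your architecture is the standard modern toric proof and most of it is sound: the identification of torus solutions with an intersection of nef divisor classes on the toric variety of the normal fan of $Q_1+\cdots+Q_d$, the identity $D_1\cdots D_d=d!\,\mv(Q_1,\ldots,Q_d)$ obtained from the degree formula plus multilinearity and McMullen's expansion, and the conservation-of-number argument for the upper bound are all correct in outline.

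The genuine gap is exactly where you predicted, in the boundary-avoidance step, and the specific argument you give there does not work as stated. First, if the face $(Q_i)_\rho$ is a single vertex $m$, it is \emph{not} true that $Z_i\cap D_\rho=\emptyset$: the restriction of $f_i$ to $D_\rho$ is the section $\lambda\,\chi^{m}$ of $\mathcal{O}(D_i)|_{D_\rho}$, which is nonvanishing only on the open orbit of $D_\rho$ and in general vanishes along deeper torus-invariant strata (already for $X=\mathbb{P}^2$ with $D_i$ a line and $D_\rho$ a coordinate line, the restricted section is a coordinate and vanishes at a point of $D_\rho$). Second, in the complementary branch your iterated Bertini argument needs each restricted system on $D_\rho$ to be basepoint-free, but the only sections available are the monomials of $A_i$ lying on the face $(Q_i)_\rho$; when that face is a vertex this subsystem is one-dimensional and not basepoint-free, so the two branches cannot be mixed within a single divisor $D_\rho$ (one $f_i$ with a vertex face, another without). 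The standard repair is to stratify the boundary by torus orbits rather than by the divisors: for each cone $\sigma\neq\{0\}$ of $\Sigma$, the restriction of $f_i$ to the orbit of $\sigma$ is the facial polynomial supported on $(Q_i)_\sigma\cap A_i$ (distinct lattice points of the face give distinct characters of the orbit); either some facial polynomial is a single monomial, hence nowhere zero on the orbit, or all $d$ facial polynomials are nonconstant with independently generic coefficients on a torus of dimension $d-\dim\sigma\leq d-1$, and an incidence-variety dimension count shows their common zero locus on that orbit is empty for generic coefficients. Since the complement of $(\C^*)^d$ in $X$ is the finite union of these orbits, this yields the containment of the solution set in the torus, after which the rest of your argument goes through.
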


The BKK Theorem is most often used to count or bound the solutions to a system of polynomial equations by computing the corresponding mixed volume. As Postnikov showed in his computation of volumes of generalized permutahedra \cite{Postnikov}, it can also be used in the reverse direction. This will be our approach as well: we will compute mixed volumes by counting the solutions to the associated systems of polynomial equations. This is seldom possible. In Postnikov's case it is easy because he obtains systems of linear equations, which have 0 or 1 solutions. In our case it is also possible, though new ideas are needed. Our systems of equations are not linear, but they are bilinear, and this allows us to express the resulting enumeration problems in terms of the combinatorics of toric ideals of graphs and the enumeration of lattice points in polytopes.

\medskip

To apply this general discussion to the harmonic polytope, we begin by defining the segments
\[
\Delta_{ij} := \, \conv \{\e_i ,\e_j  \} 
\qquad \text{ and }\qquad
\Delta_{\bi\bj} := \, \conv \{\f_i ,\f_j  \}
\qquad \text{ for } 1 \leq i < j \leq n.
\]
The permutohedron equals $\displaystyle \Pi_n = \e_{[n]} + \sum_{i<j}\Delta_{ij}$  \cite[Proposition 2.3]{Postnikov} so
the harmonic polytope equals
\begin{equation} \label{eq:Minkowski}
H_{n,n}= \e_{[n]} + \f_{[n]} + \sum_{i<j}\Delta_{ij}+\sum_{i<j}\Delta_{\bar{i}\bar{j}}+D_n 
\quad 
\subset 
\quad
\R^n \times \R^n. 
\end{equation}
The first two summands $\e_{[n]}$ and $\f_{[n]}$ simply introduce translations, so we focus on the remaining ones.
Given graphs $G$ and $G'$ on vertex set $[n]$ with edge multisets $\{i_1j_1, \ldots, i_rj_r\}$ and $\{\bi_1\bj_1, \ldots, \bi_s \bj_s\}$, respectively, we define their mixed volume to be
\begin{equation} \label{eq:mixed}
 \mv(G, G') := \mv(\Delta_{i_1j_1},\ldots,\Delta_{i_rj_r},\Delta_{\bi_1\bj_1},\ldots,\Delta_{\bi_s\bj_s},\underbrace{D_{n},\ldots,D_{n}}_{k \text{ times}}) 
\end{equation}
where $k=2n-2-r-s$. We also let ${2n-2 \choose G,G'; D_n}$ denote the number of distinct permutations of the sequence $(\Delta_{i_1j_1},\ldots,\Delta_{i_rj_r}, \Delta_{\bi_1\bj_1}, \ldots, \Delta_{\bi_s\bj_s},D_{n},\ldots,D_{n})$. Combining \eqref{eq: mixed vol def} with the fact that mixed volumes are symmetric, we obtain:

\begin{equation}\label{eq: volume expansion}
\vol(H_{n,n})=\sum_{G, G'} \binom{2n-2}{G,G'; D_n} \mv(G, G'),
\end{equation}
summing over all pairs of graphs $G$ and $G'$ on $[n]$. Therefore it remains to compute the mixed volumes $\mv(G, G')$.

\begin{remark} 
The normalized volume $\vol(H_{n,n})$ is equal to the Euclidean volume of the projection of $H_{n,n}$ onto $\Z^{\{2,\ldots,n\}} \times \Z^{\{2,\ldots,n\}}$.
This projection of $H_{n,n}$ is equal to the Minkowski sum of the images under this projection of the polytopes appearing in \eqref{eq:Minkowski}. 
Thus  $\mv(G, G')$ is the mixed volume of the projections of $\Delta_{i_1j_1},\ldots,\Delta_{i_rj_r}, \Delta_{\bi_1\bj_1}, \ldots, \Delta_{\bi_s\bj_s}, D_n, \ldots, D_n$ onto $\Z^{\{2,\ldots,n\}} \times \Z^{\{2,\ldots,n\}}$.

The BKK Theorem then tells us that $(2n-2)! \, \mv(G, G')$ counts the solutions in $(\C^*)^n \times (\C^*)^n$ to the following system of equations:
	\begin{equation} \label{eq:E(G,G')}
	\E(G,G'): \quad 
	\begin{cases}
	x_{i} = \lambda_{ij}x_{j}, \text{ for } ij\in E(G) 
	\qquad\qquad
	&\nu_{11}x_{1}y_{1}+\cdots+\nu_{1n}x_{n}y_{n}=0 \\
	y_{i} = \mu_{ij}y_{j}, \text{ for } \bi\bj\in E(G')  &\qquad\qquad\qquad\qquad \qquad\quad     \vdots\\
	x_1=y_1=1 &\nu_{k1}x_{1}y_{1}+\cdots+\nu_{kn}x_{n}y_{n}=0,
	\end{cases}
	\end{equation}
where $G$ and $G'$ have $r$ and $s$ edges respectively, $k := 2n-2-r-s$, and the coefficients $\lambda_{ij}, \mu_{ij}, \nu_{ij}$ are chosen generically.
\end{remark}

\subsection{Mixed volumes, toric ideals, root polytopes, and trimmed generalized permutahedra } \label{subsec:mixed}
 
In this section we compute the mixed volumes \eqref{eq:mixed} of the harmonic polytope. We begin by showing that most of them vanish. 

\begin{lemma}\label{lem: forests are needed}
If $G$ or $G'$ contains a cycle then the mixed volume $\mv(G, G') = 0$.
\end{lemma}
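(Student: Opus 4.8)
The plan is to use the BKK-theoretic reformulation from the preceding remark: $(2n-2)!\,\mv(G,G')$ counts the solutions in $(\C^*)^n\times(\C^*)^n$ to the system $\E(G,G')$ with generic coefficients, so it suffices to show that if $G$ (say) contains a cycle, then for \emph{generic} coefficients the system has \emph{no} solutions at all (equivalently, the solution set is empty, so the count is $0$; the BKK theorem then forces the mixed volume to vanish). By the symmetry of mixed volume we may assume the cycle lies in $G$, i.e.\ among the equations $x_i=\lambda_{ij}x_j$.

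First I would isolate the cyclic part of the argument. Suppose $i_1 i_2,\ i_2 i_3,\ \ldots,\ i_{m}i_1$ form a cycle in $G$. The corresponding equations read $x_{i_1}=\lambda_{i_1 i_2}x_{i_2}$, $x_{i_2}=\lambda_{i_2 i_3}x_{i_3}$, \ldots, $x_{i_m}=\lambda_{i_m i_1}x_{i_1}$. Multiplying these together and cancelling the (nonzero, since we are in the torus) product $x_{i_1}\cdots x_{i_m}$ yields the single scalar constraint
\[
\lambda_{i_1 i_2}\lambda_{i_2 i_3}\cdots\lambda_{i_m i_1} = 1.
\]
This is a nontrivial polynomial condition on the coefficients $\lambda$, so for a generic choice of the $\lambda_{ij}$ it fails. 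Hence the subsystem coming from the cycle edges — and a fortiori the whole system $\E(G,G')$ — has no solution in $(\C^*)^n\times(\C^*)^n$. Therefore the generic solution count is $0$, and by Theorem \ref{thm:BKK} we conclude $(2n-2)!\,\mv(G,G')=0$, i.e.\ $\mv(G,G')=0$.

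Two small technical points need to be handled carefully. The first is that I should present the argument so that it is clear we are in the ``generic coefficients'' regime of the BKK theorem rather than a special one: the statement ``if the number of solutions is finite for a given choice of coefficients, it is $\le (2n-2)!\,\mv$, and for generic coefficients it equals $(2n-2)!\,\mv$'' means that the mixed volume is detected by the generic count, so exhibiting one generic (indeed, a whole dense open set of) coefficient choice with zero solutions suffices. The second, which I expect to be the only mildly delicate point, is the bookkeeping around the projection to $\Z^{\{2,\ldots,n\}}\times\Z^{\{2,\ldots,n\}}$ and the normalization $x_1=y_1=1$: one must make sure that setting $x_1=1$ does not accidentally trivialize a cycle through vertex $1$. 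But it does not — the product relation above still gives $\prod \lambda = 1$ regardless of which vertices the cycle passes through, since all the $x$'s cancel — so the normalization is harmless. Alternatively, and perhaps more cleanly, one can argue directly at the level of Minkowski summands: a cycle $i_1i_2,\ldots,i_mi_1$ gives segments $\Delta_{i_1i_2},\ldots,\Delta_{i_mi_1}$ whose direction vectors $\e_{i_1}-\e_{i_2},\ \e_{i_2}-\e_{i_3},\ \ldots,\ \e_{i_m}-\e_{i_1}$ sum to zero and hence span a space of dimension $\le m-1$; since mixed volume vanishes whenever the summands involved do not jointly span the ambient space, and these $m$ segments together span only $m-1$ dimensions, $\mv(G,G')=0$. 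I would likely present the algebraic BKK argument as the main proof and mention the spanning-dimension argument as a remark, since the latter is the genuinely conceptual reason.
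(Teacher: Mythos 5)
Your main argument is essentially the paper's own proof: both invoke the BKK reformulation and observe that the cycle equations force the generically false relation $\lambda_{i_1i_2}\cdots\lambda_{i_mi_1}=1$, so the system $\E(G,G')$ has no torus solutions and the mixed volume vanishes. Your closing remark via the vanishing criterion for mixed volumes (the $m$ cycle segments span only $m-1$ dimensions) is a correct alternative not used in the paper, but the proof as presented matches the paper's approach.
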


\begin{proof}
By \Cref{thm:BKK}, $(2n-2)! \, \mv(G, G')$ counts the solutions in $(\C^*)^n \times (\C^*)^n$ to the system of equations $\E(G,G')$.
Suppose that $G$ contains the cycle
	$$
	i_1\to i_2\to\cdots\to i_\ell\to i_1.
	$$
for some vertices $i_1, \ldots, i_\ell \in [n]$. The equations of the corresponding $k$ edges
	$$
	x_{i_1} = \lambda_{i_1i_2}x_{i_2}\qquad \ldots\qquad x_{i_{k-1}} = \lambda_{i_{k-1}i_k}x_{i_k}\qquad x_{i_k} = \lambda_{i_ki_1}x_{i_1}
	$$
imply that $x_{i_1} = 	 (\lambda_{i_1i_2} \cdots \lambda_{i_{k-1}i_k} \lambda_{i_ki_1})x_{i_1}$. Since the $\lambda_{ij}$s are chosen generically, the only solution to this equation is $x_{i_1}=0$. It follows that the system of equations $\E(G,G')$ has no solutions in the torus $(\C^*)^n \times (\C^*)^n$, and $\mv(G, G') = 0$.
\end{proof}

Our next goal is to describe the non-zero mixed volumes $\mv(G, G')$. To accomplish it, we will 
require some additional constructions.

\bigskip

\noindent \textbf{\textsf{The Bipartite Graph and the Root Polytope.}}
Fix graphs $G$ and $G'$ on $[n]$. Let $\I=\{I_1,\ldots,I_p\}$ and $\J=\{J_1 , \ldots , J_q\}$ be the set partitions of $[n]$ into connected components of $G$ and $G'$, respectively. Let $I(k)$ and $J(k)$ denote the parts of $\I$ and $\J$ containing vertex $k$ for $k \in [n]$.
Define the bipartite graph $\Gamma = \Gamma_{\I,\J}$ with vertex set 
$\I\cup \J$ and  $n$ edges $I(k)J(k)$ for $1 \leq k \leq n$. This graph may have several edges connecting the same pair of vertices. We give the edge $I(k)J(k)$ the label $k$. Notice that the label of a vertex in $\Gamma$ is just the set of labels of the edges containing it. Therefore we can remove the vertex labels, and simply think of $\Gamma$ as a bipartite multigraph on edge set $[n]$.

The \emph{edge polytope} of $\Gamma$ is 
	\begin{eqnarray*}
	R_{\Gamma} &:=&  \conv\{\e_{I_a}+\f_{J_b} \, : \,  I_a \in \I, \, J_b \in \J, \, I_a\cap J_b\neq\varnothing\} \\
	&=& \conv\{\e_{I(1)} + \f_{J(1)}, \ldots, \e_{I(n)} + \f_{J(n)}\}  \subset \R^p \times \R^q,
	\end{eqnarray*}
writing $\e_{I(1)}, \ldots, \e_{I(p)}$ and $\f_{J(1)}, \ldots, \f_{J(p)}$ for the standard bases of $\R^p \cong \R^\I$ and $\R^q \cong \R^\J$. This polytope lives on the codimension 2 subspace cut out by the equations $x_1 + \cdots + x_p = y_1 + \cdots + y_q = 1$.
	
\begin{example} \label{ex: graph}
Consider the following graphs
	$$
	\begin{tikzpicture}
	\draw (60:1)node{$\bullet$} node[right]{2}--(0:1)node{$\bullet$} node[right]{1}
		(180:1)node{$\bullet$} node[left]{4}--(120:1)node{$\bullet$} node[left]{3}
		(300:1)node{$\bullet$} node[right]{6}--(240:1)node{$\bullet$} node[left]{5}
		(-1.75,0) node {$G=$}
		;
	\end{tikzpicture}
	\qquad\qquad\qquad
	\begin{tikzpicture}
	\draw(5*60:1)node{$\bullet$} node[right]{6} -- (4*60:1)node{$\bullet$} node[left]{5}-- (3*60:1)node{$\bullet$} node[left]{4}--(0:1)node{$\bullet$} node[right]{1}
		(60:1)node{$\bullet$} node[right]{2}--(2*60:1)node{$\bullet$} node[left]{3}
		(-1.85,0) node {$G'=$}
		;
	\end{tikzpicture}
	.$$
The partitions corresponding to these graphs are $\I=\{12,34,56\}$ and $\J=\{1456,23\}$, omitting brackets for easier readability. The 
associated bipartite multigraph is
	$$
	\begin{tikzpicture}[scale=1.5]
	\draw (-1,.5) node {$\Gamma_{\I,\J}=$}
		(1.5,0)node{$\bullet$}	node[below]{}	--(0,1)node{$\bullet$}	node[above]{}	--	(0.5,0)node{$\bullet$}	node[below]{}--	(1,1)node{$\bullet$}	node[above]{}	--	(1.5,0)
		(0.5,0) to [bend left=15] (2,1)node{$\bullet$}	node[above]{}
		(0.5,0) to [bend right=15] (2,1)
		;
	\draw (.1,.5) node {1}
		(.31,1) node {2}
		(.79,.9) node {4}
		(1.2,.9) node {3}
		(1.55,1) node {5}
		(1.74,.5) node {6}
		;
	\end{tikzpicture}
	\begin{tikzpicture}[scale=1.5]
	\draw 
		(-1,.5) node {$=$}
		(1.5,0)node{$\bullet$}	node[below]{23}	--	(0,1)node{$\bullet$}	node[above]{12}	--	(0.5,0)node{$\bullet$}	node[below]{1456}--	(1,1)node{$\bullet$}	node[above]{34}	--	(1.5,0)
		(0.5,0) to [bend left=15] (2,1)node{$\bullet$}	node[above]{56}
		(0.5,0) to [bend right=15] (2,1)
		;
	\end{tikzpicture}
	$$
and the corresponding edge polytope is
\[
R_\Gamma = \conv(\e_a + \f_A, \, \e_a + \f_B, \,  \e_b + \f_B, \, \e_b + \f_A, \, \e_c + \f_A, \, \e_c + \f_A) \subset \R^{abc} \times \R^{AB},
\]
writing $a=12, b=34, c=56$ and $A=1456, B=23$.
\end{example}

\begin{lemma} \label{lem:latticepts}
The only lattice points of the edge polytope $R_{\Gamma}$ are its vertices.
\end{lemma}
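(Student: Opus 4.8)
The plan is to show that the edge polytope $R_\Gamma$ of a bipartite multigraph has no interior lattice points and no non-vertex boundary lattice points, using two standard structural facts: first, that $R_\Gamma$ is a $0/1$ polytope (its vertices $\e_{I(k)}+\f_{J(k)}$ all have coordinates in $\{0,1\}$), and second, that for \emph{bipartite} graphs the edge polytope is known to be a particularly nice object — in fact the vertices $\e_{I_a}+\f_{J_b}$ correspond to the edges of a bipartite graph and any lattice point in the convex hull, being a convex combination of $0/1$ vectors summing to the all-ones pattern $x_1+\cdots+x_p = y_1+\cdots+y_q = 1$, is itself forced to be a $0/1$ vector, hence a vertex.

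Concretely, first I would let $v = (x,y) \in R_\Gamma \cap (\Z^p \times \Z^q)$. Writing $v = \sum_k t_k (\e_{I(k)} + \f_{J(k)})$ as a convex combination with $t_k \ge 0$, $\sum_k t_k = 1$, observe that every coordinate $x_a = \sum_{k : I(k) = I_a} t_k$ and $y_b = \sum_{k : J(k) = J_b} t_k$ is a sum of a subcollection of the $t_k$, hence lies in $[0,1]$. Since $v$ is integral, each $x_a$ and each $y_b$ is therefore $0$ or $1$. The constraints $\sum_a x_a = \sum_b y_b = 1$ then force exactly one $x_a$ to equal $1$, say $x_{a_0} = 1$, and exactly one $y_b$ to equal $1$, say $y_{b_0} = 1$, with all other coordinates zero; that is, $v = \e_{I_{a_0}} + \f_{J_{b_0}}$.

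It remains to check that such a $v$ is actually a vertex of $R_\Gamma$, i.e. that it is one of the listed generators: from $x_{a_0} = \sum_{k : I(k) = I_{a_0}} t_k = 1$ and $\sum_k t_k = 1$ we get $t_k = 0$ whenever $I(k) \ne I_{a_0}$, and similarly $t_k = 0$ whenever $J(k) \ne J_{b_0}$; so $t_k$ is supported on indices $k$ with $I(k) = I_{a_0}$ and $J(k) = J_{b_0}$, and for any such $k$ we have $\e_{I(k)} + \f_{J(k)} = \e_{I_{a_0}} + \f_{J_{b_0}} = v$. (In particular $I_{a_0} \cap J_{b_0} \ne \varnothing$, so this is a genuine generator.) Each generator of a $0/1$ polytope is automatically a vertex since the $0/1$ cube has all $0/1$ points as vertices, so $v$ is a vertex of $R_\Gamma$.

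I do not expect any serious obstacle here — the statement is essentially the observation that $R_\Gamma$ is the bipartite analogue of a matching/edge polytope and that such $0/1$ polytopes with the two "exactly one $1$ per block" constraints are lattice-point-free away from vertices. The only point requiring a little care is the degenerate possibility that several generators coincide (parallel edges of $\Gamma$, which occur precisely when $G$ and $G'$ share a common component as in Example~\ref{ex: graph}); the argument above handles this automatically, since we only conclude $v$ equals \emph{some} generator, not that the generators are distinct.
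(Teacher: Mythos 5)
Your proof is correct, but it follows a different route from the paper's. You work inside the $0/1$ cube: writing a lattice point as a convex combination of the generators $\e_{I(k)}+\f_{J(k)}$, you note each coordinate is a partial sum of the convex coefficients, hence in $[0,1]$, so integrality together with the block constraints $\sum_a x_a=\sum_b y_b=1$ forces the point to be $\e_{I_{a_0}}+\f_{J_{b_0}}$; the support argument then shows this is actually one of the listed generators (so $I_{a_0}\cap J_{b_0}\neq\varnothing$), and a $0/1$ point of a polytope contained in the cube is automatically a vertex. The paper instead uses a metric argument: $R_\Gamma$ lies on the sphere of radius $\sqrt{2}$ about the origin, so any lattice point has norm $0$, $1$, or $\sqrt{2}$; the hyperplanes $\sum_a x_a=\sum_b y_b=1$ exclude the first two cases, so every lattice point is of the form $\e_i+\f_j$, and since all such points lie on the sphere they are in convex position, whence any one lying in $R_\Gamma$ is a vertex. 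The sphere argument is shorter and avoids coefficient bookkeeping, while your argument is more elementary (no norms, just the two affine constraints) and has the small added benefit of explicitly identifying the lattice point as a generator $\e_{I_a}+\f_{J_b}$ with $I_a\cap J_b\neq\varnothing$, handling parallel edges of $\Gamma$ transparently. Both proofs are complete and essentially interchangeable for the purposes of Proposition \ref{lem:normal} and Proposition \ref{prop:toricvariety}.
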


\begin{proof}
The polytope $R_{\Gamma}$ is contained in the sphere $S$ centered at the origin with radius $\sqrt{2}$, so it can only contain lattice points of norm $0, 1,$ or $\sqrt{2}$. Since $R_{\Gamma}$ lies on the hyperplanes $\sum_i x_i = 1$ and $\sum_i y_i = 1$, it cannot contain a lattice point of norm $0$ (the origin) or $1$ (a point of the form $\pm \e_i$ or $\pm \f_i$). Therefore every lattice point in $R_{\Gamma}$ must be of the form $\e_i + \f_j$ for some $i, j \in [n]$. These points are all on the surface of the sphere $S$, so they are in convex position; therefore, if a point $\e_i+\f_j$ is in $R_{\Gamma}$, it must in fact be a vertex of $R_{\Gamma}$. The result follows.
\end{proof}

A lattice polytope $P$ is \emph{normal} if for all positive integers $k$ and all lattice points $x$ in $kP$ there exist lattice points $x_1, \ldots, x_k$ in $P$ such that $x = x_1 + \cdots + x_k$. 
A lattice polytope $P$ is \emph{very ample} if the above property holds for all sufficiently large integers $k$. 
This is a favorable property algebro-geometrically, because if $P$ is very ample then the lattice points of $P$ provide a concrete projective embedding of the toric variety $X_P$ of $P$, as follows.
Let $P\cap \Z^d=\{\mathsf{a}_1,\ldots, \mathsf{a}_s\} =:A$.
The projective embedding of $X_P$ is the Zariski closure of the image of the map
\begin{eqnarray*}
 (\C^*)^d & \longrightarrow & \C\mathbb{P}^{s-1} \\
	 \mathbf{t} & \longmapsto& ( \mathbf{t}^{\mathsf{a}_1},\ldots, \mathbf{t}^{\mathsf{a}_s})
\end{eqnarray*}
and its defining ideal $I_A$ is the kernel of the homomorphism
\begin{eqnarray*}
\varphi : \C[x_1, \ldots, x_s] & \longrightarrow & \C[t_1^{\pm 1}, \ldots, t_d^{\pm 1}] \\
x_i & \longmapsto & \mathbf{t}^{\mathsf{a}_i}.
\end{eqnarray*}
 The map above induces the map of lattices
\begin{eqnarray*}
\widehat{\varphi} : \Z^s & \longrightarrow & \Z^d \\
	\e_i & \longmapsto& \mathsf{a}_i,
\end{eqnarray*}
where $\e_1,\ldots,\e_s$ is the standard basis of $\Z^s$.
The kernel of ${\varphi}$ is the \emph{toric ideal}
\[
 I_A = \langle \mathbf{x}^{\mathsf{u}} - \mathbf{x}^{\mathsf{v}} \, : \, \mathsf{u}, \mathsf{v} \in \mathbb{N}^s, \, \widehat{\varphi}(\mathsf{u}) = \widehat{\varphi}(\mathsf{v}) \rangle \,\, \subset \,\, \C[x_1, \ldots, x_s];
\]
see \cite[\S2.1 and \S2.3]{Cox-Little-Schenck}.

\begin{proposition}\label{lem:normal}
If $\Gamma$ is bipartite, the edge polytope $R_{\Gamma}$ is normal.
\end{proposition}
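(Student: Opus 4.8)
The plan is to exploit the fact that the edge polytope of a bipartite graph is well understood, and in particular to recognize $R_\Gamma$ as (a translate/copy of) the transportation-type polytope attached to a bipartite graph. Concretely, $R_\Gamma$ is spanned by the points $\e_{I(k)}+\f_{J(k)}$, which are exactly the incidence vectors of edges of the bipartite multigraph $\Gamma$ on vertex set $\I\cup\J$. These incidence vectors form a totally unimodular system: the vertex-edge incidence matrix of any bipartite graph is totally unimodular. I would first record this, and then show that total unimodularity of the generating set forces normality of $R_\Gamma$.

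The key steps, in order, are as follows. First I would set up the matrix $M$ whose columns are the vectors $\e_{I(k)}+\f_{J(k)}\in\R^\I\times\R^\J$ for $k\in[n]$; since each such column has exactly one $1$ in the $\I$-block and one $1$ in the $\J$-block and zeros elsewhere, $M$ is the incidence matrix of the bipartite multigraph $\Gamma$, hence totally unimodular. Second, I would invoke the standard fact (see e.g.\ the discussion of normality in \cite{Cox-Little-Schenck}, or Sturmfels' work on toric ideals) that if the columns $\mathsf{a}_1,\dots,\mathsf{a}_s$ of a totally unimodular matrix all lie on a common affine hyperplane, then the polytope $\conv\{\mathsf{a}_1,\dots,\mathsf{a}_s\}$ is normal; the point is that total unimodularity makes the semigroup generated by the $\mathsf{a}_i$ saturated in the lattice it generates, and combined with the hyperplane condition (here $\sum x_i=\sum y_i=1$) this is exactly the statement that $kR_\Gamma\cap\Z^{\I\times\J}$ is the $k$-fold sumset of $R_\Gamma\cap\Z^{\I\times\J}$ for all $k\ge1$. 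Third, I would note that the relevant lattice is generated by the vertices themselves, so that Lemma \ref{lem:latticepts} (the only lattice points of $R_\Gamma$ are its vertices) meshes consistently with the claim, and conclude normality.

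An alternative, more self-contained route avoids quoting the unimodularity-implies-normality lemma: one can argue directly via a bijective/flow argument. A lattice point in $kR_\Gamma$ is a nonnegative integer combination $\sum_k c_k(\e_{I(k)}+\f_{J(k)})$ with $\sum c_k=k$; this is precisely an integer flow of value $k$ through the bipartite network $\Gamma$ with prescribed total. Decomposing an integer flow into a sum of $k$ unit flows along single edges — which is possible exactly because $\Gamma$ is bipartite, so the cycle space carries no obstruction of the kind that appears for odd cycles — expresses the point as a sum of $k$ vertices of $R_\Gamma$. I would probably present this flow-decomposition argument in full since it is short and transparent, and mention the unimodularity viewpoint as a remark.

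The main obstacle is making precise the claim ``total unimodularity plus common hyperplane implies normality'' in the degenerate setting here: $R_\Gamma$ is not full-dimensional in $\R^\I\times\R^\J$ (it sits in the codimension-$2$ subspace $\sum x_i=\sum y_i=1$), the generating set can have repeated points (parallel edges of $\Gamma$ give the same vertex), and one must be careful that ``lattice points of $kR_\Gamma$'' means lattice points of the ambient $\Z^{\I}\times\Z^{\J}$, which is the lattice actually generated by the $\e_{I(k)}+\f_{J(k)}$ up to the affine shift. Handling the non-full-dimensionality cleanly — i.e.\ checking the saturation statement in the sublattice $\Lambda$ of $\Z^{\I}\times\Z^{\J}$ affinely spanned by the vertices, rather than in the full lattice — is where the care is needed; the bipartiteness hypothesis is exactly what guarantees that $\Lambda$ is saturated, since for a non-bipartite $\Gamma$ one loses a factor of $2$ coming from odd cycles. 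Everything else is routine.
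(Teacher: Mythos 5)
Your proposal is correct and takes essentially the same route as the paper: the paper's proof also rests on the unimodularity of the bipartite configuration $\{\e_i+\f_j\}$, implemented via Carath\'eodory's theorem (a lattice point of $kR_\Gamma$ is a positive combination of linearly independent vertices, which unimodularity forces to have integer coefficients, and the hyperplane condition $\sum_i x_i = k$ then splits it into $k$ vertices). One caution if you present the flow version as your main argument: the real content is that an integral point of $kR_\Gamma$ admits an \emph{integral} edge weighting at all (flow integrality, i.e.\ total unimodularity of the bipartite incidence matrix), not the subsequent splitting into unit flows, which is trivial once integrality is in hand --- so bipartiteness must be invoked at that earlier step to avoid assuming what is to be proved.
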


\begin{proof}
Let
\[
C_{\Gamma} = \textsf{cone}(R_{\Gamma}) = \{\lambda \, q \, : \, q \in R_{\Gamma}, \lambda \geq 0\} \subset \R^n \times \R^n
\]
be the cone over the polytope $R_{\Gamma}$. 
Consider a lattice point $x$ in $k \,R_{\Gamma}$. The cone $C_{\Gamma}$ is generated by the vertices of $R_{\Gamma}$, so $x$ is a positive combination of them. By Caratheodory's theorem, $x$ can be expressed a positive combination of only $e$ linearly independent vertices of $R_{\Gamma}$, say $v_1, \ldots, v_e$, for some $e \leq \dim \, R_{\Gamma}$. But the vector configuration $\{\e_i + \f_j \, : \, 1 \leq i, j \leq n\}$ is unimodular, so $v_1, \ldots, v_e$ form a lattice basis for $\textsf{cone}(v_1, \ldots, v_e) \cap (\Z^n \times \Z^n)$. It follows that $x$ is a positive \textbf{integer} combination of $v_1, \ldots, v_e \in R_{\Gamma}$. We conclude that $R_{\Gamma}$ is normal as desired.
\end{proof}

\bigskip

\noindent \textbf{\textsf{The Toric Ideal, the Toric Variety, and the Trimmed Generalized Permutahedra.}}
The graph $\Gamma = \Gamma_{\I,\J}$ gives rise to a ring homomorphism
\begin{eqnarray*}
\R[z_e \, : e \text{ edge of } \Gamma] &\longrightarrow& \R[y_v \, : v \text{ vertex of } \Gamma] \\
z_e & \longmapsto & y_iy_j  \quad \text{ where edge $e$ joins vertices $i$ and $j$}
\end{eqnarray*}
The kernel of this homomorphism is called the \emph{toric ideal} $I_\Gamma$ of $\Gamma$; it is a homogeneous ideal given by the cycles of even length in $\Gamma$:
\[
I_\Gamma = \langle z_{e_1} z_{e_3} \cdots z_{e_{2k-1}} - z_{e_2} z_{e_4} \cdots z_{e_{2k}} \, : \, e_1e_2\cdots e_{2k} \text{ is a cycle of } \Gamma \rangle;
\]
see \cite[Section 5.3]{Herzog-Hibi-Ohsugi}. This ideal is related to the edge polytope as follows.

\begin{proposition}\label{prop:toricvariety}
If $\Gamma$ is a bipartite graph, the projective variety of the toric ideal $I_{\Gamma}$ is an embedding of the toric variety $X_\Gamma$ of the edge polytope $R_\Gamma$. 
\end{proposition}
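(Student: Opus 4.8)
The plan is to identify the two toric ideals involved — the toric ideal $I_\Gamma$ of the graph $\Gamma$ and the toric ideal $I_A$ of the lattice point set $A = R_\Gamma \cap (\Z^n \times \Z^n)$ — and show they coincide up to the obvious relabeling of variables. By Lemma \ref{lem:latticepts}, the lattice points of $R_\Gamma$ are exactly its vertices, namely the points $\e_{I(k)} + \f_{J(k)}$ for $k \in [n]$. So there is a natural bijection between the variables $z_e$ indexed by edges $e = k$ of $\Gamma$ and the variables $x_k$ indexed by lattice points of $R_\Gamma$. Under this bijection I would compare the two monomial maps: the graph map sends $z_k \mapsto y_{I(k)} y_{J(k)}$, while the edge-polytope map sends $x_k \mapsto \mathbf{t}^{\e_{I(k)} + \f_{J(k)}} = t_{I(k)} t_{J(k)}$ (with $t$-variables indexed by $\I \cup \J$). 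These are literally the same map after identifying the vertex set $\I \cup \J$ of $\Gamma$ with the coordinates of $\R^p \times \R^q$. Hence the kernels agree, i.e. $I_\Gamma = I_A$ as ideals of the polynomial ring in the $z_k = x_k$.

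Next I would invoke the very-ampleness (indeed normality) of $R_\Gamma$ established in Proposition \ref{lem:normal}: because $\Gamma$ is bipartite, $R_\Gamma$ is normal, hence very ample, so by the discussion preceding the proposition the lattice points $A$ of $R_\Gamma$ furnish an honest projective embedding of the toric variety $X_{R_\Gamma}$, and the homogeneous ideal of that embedded variety is exactly $I_A$. Combining this with the identification $I_A = I_\Gamma$ from the previous paragraph gives the claim: the projective variety $V(I_\Gamma)$ is the image of $X_{R_\Gamma}$ under the embedding determined by the lattice points of $R_\Gamma$.

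The one point requiring a little care — and the only genuine obstacle — is the bookkeeping that makes "these are the same map" precise. The graph $\Gamma$ is a multigraph: several edges may join the same pair of vertices $I_a, J_b$ (as in Example \ref{ex: graph}, where two edges labeled $5$ and $6$ both join $56$ to $1456$), and the corresponding lattice points of $R_\Gamma$ literally coincide as points of $\R^n \times \R^n$. One must therefore be a bit careful about whether $A$ is taken as a multiset of exponent vectors or as the genuine (reduced) lattice point set; the standard convention for the toric ideal $I_A$ uses one variable per distinct lattice point, which matches the reduced vertex set of $R_\Gamma$ — and one checks this is harmless, since repeated edges of $\Gamma$ contribute variables $z_e, z_{e'}$ with $z_e - z_{e'} \in I_\Gamma$, so $I_\Gamma$ and $I_A$ define the same projective variety whichever convention is used. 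Once this is pinned down, the rest is the routine observation that both maps are $z_e = x_e \mapsto (\text{product of the two endpoint variables of }e)$, and the proposition follows.
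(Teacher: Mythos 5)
Your proposal is correct and follows essentially the same route as the paper, which simply cites Lemma \ref{lem:latticepts}, Proposition \ref{lem:normal}, and \cite[\S 2.3]{Cox-Little-Schenck}: normality (hence very ampleness) of $R_\Gamma$ makes its lattice points, which are exactly the vertices $\e_{I(k)}+\f_{J(k)}$, give a projective embedding whose toric ideal coincides with $I_\Gamma$ under the edge--lattice-point correspondence. Your explicit treatment of the multigraph issue (parallel edges giving repeated lattice points, handled by the relations $z_e - z_{e'} \in I_\Gamma$) is a detail the paper leaves implicit, and it is handled correctly.
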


\begin{proof}
This holds thanks to Lemma \ref{lem:latticepts} and \ref{lem:normal}; see \cite[\S 2.3]{Cox-Little-Schenck}.
\end{proof}

The following polytopes will also play an important role. Consider the Minkowski sums
\[
P_{\Gamma}:=\sum_{i=1}^p \Delta_{\nbr(I_i)}\subset\R^{q} 
\qquad \text{and} \qquad 
Q_{\Gamma}:=\sum_{j=1}^q \Delta_{\nbr(J_j)}\subset\R^{p}
\]
where $\Delta_{I}:=\conv\{\e_{i} \, : \, i\in I\}$, and where
$\nbr(I_i) = \{j \in [q] \, : \, I_iJ_j \text{ is an edge of } \Gamma\}$ and 
 $\nbr(J_j) = \{i \in [p] \, : \, I_iJ_j \text{ is an edge of } \Gamma\}$
 denote
 the neighborhoods of $I_i$ and $J_j$ in $\Gamma$.
Finally, define the \emph{trimmed generalized permutahedra} of $\Gamma$ to be the Minkowski differences
\[
P^{-}_{\Gamma}:=P_{\Gamma}-\Delta_{[q]}\subset\R^{q} 
\qquad \text{and} \qquad   
Q^{-}_{\Gamma}:=Q_{\Gamma}-\Delta_{[p]}\subset\R^{p}
\]

\begin{example} \label{ex: ideal}
We return to Example \ref{ex: graph}. The toric ideal of $\Gamma$ is
\[
I_\Gamma = \langle z_1z_3-z_2z_4, z_5-z_6 \rangle \subset \C[z_1,z_2,z_3,z_4,z_5,z_6].
\]
The generalized permutahedra associated to $\Gamma$ are
\[
	P_{\Gamma}=\Delta_{abc}+\Delta_{ab}\subset\R^{abc}
		\qquad \text{and} \qquad   
	Q_{\Gamma}=2\Delta_{AB}+\Delta_{A}	\subset\R^{AB} \\
\]
and the trimmed generalized permutahedra are
\[
	P^{-}_{\Gamma}=\Delta_{ab}	\subset\R^{abc} 
		\qquad \text{and} \qquad   
	Q^{-}_{\Gamma}=\Delta_{AB}+\Delta_A	\subset\R^{AB} .
\]
In general, the polytopes $P^{-}_{\Gamma}$ and $Q^{-}_{\Gamma}$ live in different dimensions and can be very different from each other. However, we will see that they always have the same number of lattice points.
\end{example}

\bigskip

\noindent \textbf{\textsf{Putting it all Together.}} We now have all the ingredients to describe the mixed volumes $\mv(G,G')$.

\begin{proposition}\label{key lemma}
Let $G$ and $G'$ be acyclic graphs on $[n]$ and $\Gamma$ be the corresponding bipartite graph, having $p$ and $q$ vertices on each side of the bipartition. 
The following numbers are equal:
\begin{enumerate}
\item
The $(2n-2)$-dimensional mixed volume $\mv(G, G')$ multiplied by $(2n-2)!$.
\item
The $(p+q-2)$-dimensional volume of the edge polytope $R_{\Gamma}$ multiplied by $(p+q-2)!$.
\item
The number $i(P^{-}_{\Gamma})$ of lattice points in the trimmed generalized permutahedron $P^{-}_{\Gamma}$ in $\R^q$.
\item
The number $i(Q^{-}_{\Gamma})$ of lattice points in the  trimmed generalized permutahedron $Q^{-}_{\Gamma}$ in $\R^p$.
\end{enumerate}
Furthermore, the numbers above are zero if and only if $\Gamma$ is disconnected. 
\noindent
If $\Gamma$ is connected, the numbers above are equal to:
\begin{enumerate}
 \item[5.] the degree of the projective embedding $V(I_\Gamma)$ of the toric variety $X_{\Gamma}$.
 \end{enumerate}
\end{proposition}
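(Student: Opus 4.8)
The plan is to prove the chain of equalities (1)--(4), the vanishing criterion, and (5) in that order, leaning on the BKK Theorem and the combinatorics of the system $\E(G,G')$.

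\textbf{From (1) to (2).} First I would use Theorem \ref{thm:BKK} to interpret $(2n-2)!\,\mv(G,G')$ as the generic number of solutions in $(\C^*)^n \times (\C^*)^n$ to $\E(G,G')$. The edge equations $x_i = \lambda_{ij} x_j$ on the acyclic graph $G$ (with $G'$) let us solve for every $x_i$ and $y_i$ in terms of one representative per connected component; concretely, after normalizing, $x_k$ depends only on the component $I(k)$ and $y_k$ only on $J(k)$. Substituting into the $k = 2n-2-r-s$ generic bilinear equations $\sum_k \nu_{\bullet k} x_k y_k = 0$ turns them into $k$ generic linear equations in the monomials $u_{I(k)} v_{J(k)}$, i.e. in the variables indexed by the vertices of $R_\Gamma$. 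Since $R_\Gamma$ has $p+q-2$ as its dimension and lives on the codimension-2 subspace $\sum x_i = \sum y_j = 1$, we have $k = 2n-2-r-s = (p+q-2)$ after checking $r = n-p$, $s = n-q$ (acyclicity). Reinterpreting this reduced polynomial system through BKK in the other direction, its solution count is $(p+q-2)!$ times the mixed volume of $p+q-2$ copies of $R_\Gamma$, which is $(p+q-2)!\,\vol(R_\Gamma)$. The main subtlety here is bookkeeping the genericity: I must argue the composite coefficients arising after substitution are still generic enough for the BKK equality to hold, which follows because the substitution is a monomial change of coordinates (an isomorphism of tori) composed with a generic linear map.

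\textbf{From (2) to (3) and (4).} This is the combinatorial heart. The edge polytope $R_\Gamma = \conv\{\e_{I(k)} + \f_{J(k)}\}$ is built from the unimodular root system $\{\e_i + \f_j\}$, so by Proposition \ref{lem:normal} and Lemma \ref{lem:latticepts} it is normal and its only lattice points are its vertices. For normal polytopes, the normalized volume equals the leading coefficient (times $\dim!$) of the Ehrhart polynomial, which in turn equals the $h^*$-sum, but more usefully I would use the standard fact that for a normal polytope the normalized volume counts lattice points of a generic "half-open" decomposition. The key identity is that the edge polytope of a bipartite graph $\Gamma$ is lattice-equivalent (up to the projection that forgets the two redundant coordinates) to a Cayley-type polytope whose volume is computed by the mixed volume / lattice-point count of the Minkowski sum $P_\Gamma = \sum_i \Delta_{\nbr(I_i)}$, via the Cayley trick: the normalized volume of $R_\Gamma$ equals the mixed volume $\mv(\Delta_{\nbr(I_1)}, \ldots, \Delta_{\nbr(I_p)})$ in $\R^q$, which by the theory of generalized permutahedra (Postnikov) equals the number of lattice points $i(P_\Gamma^-)$ of the trimmed polytope $P_\Gamma^- = P_\Gamma - \Delta_{[q]}$. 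Running the same Cayley trick with the roles of $\I$ and $\J$ swapped gives $i(Q_\Gamma^-)$, and since both equal (2) they equal each other. I expect this step to be the main obstacle: setting up the Cayley-trick correspondence cleanly and citing (or re-deriving) the identity $\mv(\Delta_{S_1}, \ldots, \Delta_{S_p}) = i(\sum \Delta_{S_i} - \Delta_{[q]})$ for neighborhoods of a bipartite graph requires care, since the trimming $-\Delta_{[q]}$ must be justified as a genuine Minkowski difference (which is where bipartiteness and connectivity enter).

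\textbf{The vanishing criterion and (5).} For the "zero iff disconnected" claim: if $\Gamma$ is disconnected, then after solving the edge equations the bilinear forms do not involve enough independent monomials — more precisely, the reduced linear system on the monomials $u_I v_J$ has fewer than $p+q-2$ effective variables (one linear relation is lost per extra connected component beyond one), so the generic linear system is inconsistent on the torus and the solution count, hence $\vol(R_\Gamma)$, is $0$; equivalently $R_\Gamma$ fails to be full-dimensional in its claimed dimension. Conversely, if $\Gamma$ is connected, $R_\Gamma$ is a genuine $(p+q-2)$-dimensional polytope with positive volume. Finally, for (5): by Propositions \ref{lem:normal} and \ref{prop:toricvariety}, when $\Gamma$ is bipartite the projective embedding $V(I_\Gamma)$ of the toric variety $X_\Gamma$ is exactly the toric variety of the normal polytope $R_\Gamma$ in the embedding given by its lattice points. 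A standard fact in toric geometry (e.g. \cite{Cox-Little-Schenck}) is that the degree of the projective toric variety of a normal lattice polytope $P$ equals $\dim(P)!\cdot\vol(P)$, the normalized volume; applying this to $P = R_\Gamma$ identifies $\deg V(I_\Gamma)$ with quantity (2), completing the proof. I would close by remarking that assembling this Proposition with \eqref{eq: volume expansion} and a count of how many pairs $(G,G')$ give each $\Gamma$ (Proposition \ref{prop:summands}, via the factor $\prod_v \deg(v)^{\deg(v)-2}$ counting pairs of forests refining the components) yields Theorem \ref{thm:volume}.
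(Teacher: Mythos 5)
Your outer steps track the paper closely: the reduction (1)$=$(2) by solving the edge equations along the acyclic graphs, substituting into the generic bilinear forms, and re-reading the reduced system through BKK (with the observation $r=n-p$, $s=n-q$, $k=p+q-2$ and that the composite coefficients stay generic) is exactly the paper's argument, as is the identification (5) of $\deg V(I_\Gamma)$ with the normalized volume of the normal polytope $R_\Gamma$. The genuine gap is the bridge (2)$=$(3)$=$(4). The identity you rest it on, $\vol(R_\Gamma)=\mv\big(\Delta_{\nbr(I_1)},\ldots,\Delta_{\nbr(I_p)}\big)$ together with $\mv(\Delta_{S_1},\ldots,\Delta_{S_p})=i\big(\sum_i\Delta_{S_i}-\Delta_{[q]}\big)$, is not even well-formed in general: a mixed volume of polytopes lying in the hyperplane $\{\sum x_j=1\}\subset\R^q$ takes $q-1$ arguments, while you feed it $p$, and $p=q-1$ fails for most $\Gamma$ in our family. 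Where it does parse it is false: take $p=2$, $q=3$, $\nbr(I_1)=\nbr(I_2)=\{J_1,J_2,J_3\}$ (realized, e.g., by $G$ with components $\{1,2,3\},\{4,5,6\}$ and $G'$ with components $\{1,4\},\{2,5\},\{3,6\}$); then $\mv(\Delta_{123},\Delta_{123})=\vol(\Delta_{123})$ gives $1/2$ (or $1$ with the factorial normalization), while $P_\Gamma^-=\Delta_{123}$ has $3$ lattice points, which is the correct value of $(p+q-2)!\,\vol(R_\Gamma)$ here since $R_\Gamma\cong\Delta^1\times\Delta^2$. What the Cayley trick actually gives -- and you are right that $R_\Gamma$ is the Cayley polytope of the $\Delta_{\nbr(I_i)}$ -- is that $\vol(R_\Gamma)$ is a weighted \emph{sum} of the mixed volumes $\mv\big(\Delta_{\nbr(I_1)}[k_1],\ldots,\Delta_{\nbr(I_p)}[k_p]\big)$ over all $k_1+\cdots+k_p=q-1$; converting that sum into the single count $i(P_\Gamma^-)$ is precisely the nontrivial content of Postnikov's Theorem 12.2 (proved there via triangulations of root polytopes), which the paper simply cites. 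So your middle step either silently reduces to that citation or is missing its actual proof.

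Two smaller omissions: the proposition asserts (1)$=$(2)$=$(3)$=$(4) for \emph{all} acyclic $G,G'$, but Postnikov's theorem (and your Cayley sketch) only covers connected $\Gamma$; your vanishing discussion handles (1) and (2) but never shows $i(P_\Gamma^-)=i(Q_\Gamma^-)=0$ when $\Gamma$ is disconnected. The paper does this by noting that if $\Gamma=\Gamma_1\cup\Gamma_2$ then $P_\Gamma$ lies in the affine subspace where the coordinates indexed by $\J_1$ sum to $|\I_1|$ and those indexed by $\J_2$ sum to $|\I_2|$, so $\dim P_\Gamma<q-1=\dim\Delta_{[q]}$ and the Minkowski difference is empty. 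Conversely, nonvanishing for connected $\Gamma$ needs $\dim R_\Gamma=p+q-2$, which you assert but should justify (the paper cites Herzog--Hibi--Ohsugi). Finally, a cosmetic point: the count $\prod_v \deg(v)^{\deg(v)-2}$ of pairs of forests inducing $\Gamma$ belongs to the proof of Theorem \ref{thm:volume}, not to Proposition \ref{prop:summands}.
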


Recall that all volumes are normalized so the volume of a primitive parallelotope in any dimension is $1$.

\begin{proof}
Let $\E(G,G')$ be the system of equations \eqref{eq:E(G,G')} associated to the mixed volume $\mv(G,G')$. By \Cref{thm:BKK}, the quantity in 1. counts the solutions in $(\C^*)^n \times (\C^*)^n$ to $\E(G,G')$.

\bigskip

\noindent
(1. = 2.)
The set of solutions to $\E(G,G')$ is the variety $V(I_{G,G'}+J)$, where 
\begin{eqnarray*}
I_{G,G'} &:=&
	\big\langle x_i-\lambda_{ij}x_j\, : \, i<j, \, ij\in E(G)\big\rangle 	+
	\big\langle y_i-\mu_{ij}y_j\, : \,  i<j, \, ij\in E(G')\big\rangle \\
J &:=& 
	\big\langle \nu_{i1}x_{1}y_{1}+\cdots+\nu_{in}x_{n}y_{n} \, : \, 1 \leq i  \leq k\big\rangle + \langle x_1-1, y_1-1 \rangle
\end{eqnarray*}
in $\C[x_1^\pm,\ldots,x_n^\pm,y_1^\pm,\ldots,y_n^\pm]$. 

Consider the subspace 
	$$
	\L=\{(x_1,\ldots,x_n,y_1,\ldots,y_n)\, : \, x_{i} = \lambda_{ij}x_{j} \text{ for } ij\in E(G)\, , \,
	y_{i} = \mu_{ij}y_{j} \text{ for } ij\in E(G')\} \subset \C^n \times \C^n
	$$
and the projection
	\begin{eqnarray*}
	\widetilde\psi: \L 	& \longrightarrow&  \C^p\times\C^q\\
	(x_1,\ldots,x_n,y_1,\ldots,y_n) 	&\longmapsto& (x_{\min I_1},\ldots,x_{\min I_p},y_{\min J_1},\ldots,y_{\min J_q}).
	\end{eqnarray*}	
Note that for $(x,y) \in \L$ if we have $x_{\min I_a}=0$
then $x_{i}=0$ for all $i\in I_a$, since $I_a$ is a connected component in $G$;
the same holds for the $y$s.
Therefore $\widetilde\psi$ is injective and, since $\dim(\L)=p+q=\dim(\C^p \times \C^q)$, it follows that $\widetilde\psi$ is an isomorphism of affine varieties.
Moreover, $x_{\min I_a}=0$ if and only if $x_{i}=0$ for some $i\in I_a$; the same holds for the $y$s. This implies that the restriction of $\widetilde\psi$ to $ \L\cap \big((\C^*)^n \times (\C^*)^n\big)$ is a morphism with image $(\C^*)^p \times (\C^*)^q$.
This morphism defines the following isomorphism between the coordinate rings of $ \L\cap \big((\C^*)^n \times (\C^*)^n \big) $ and $(\C^*)^p \times (\C^*)^q$:
	\begin{eqnarray*}
	\psi:\C[x_{I_1}^\pm,\ldots,x_{I_p}^\pm,y_{J_1}^\pm,\ldots,y_{J_q}^\pm]   & \longrightarrow & \C[x_1^\pm,\ldots,x_n^\pm,y_1^\pm,\ldots,y_n^\pm]/I_{G,G'}\\
	x_{I_a} &\longmapsto& \bar{x}_{\min I_a}\\
	y_{J_b} &\longmapsto& \bar{y}_{\min J_b}.
\end{eqnarray*}
Let $\overline{J}$ be the image of $J$ in the quotient $\C[x_1^\pm,\ldots,x_n^\pm,y_1^\pm,\ldots,y_n^\pm]/I_{G,G'}$.
By Noether's isomorphism theorems we have
\begin{align*}
	 \C[x_1^\pm,\ldots,x_n^\pm,y_1^\pm,\ldots,y_n^\pm]/(I_{G, G'}+J)
	 & \cong
	 \left(	\C[x_1^\pm,\ldots,x_n^\pm,y_1^\pm,\ldots,y_n^\pm]/	I_{G,G'}	\right) \big/	\big(	(I_{G,G'}+J)/I_{G,G'}	\big)\\
	&\cong 
	\C[x_{I_1}^\pm,\ldots,x_{I_p}^\pm,y_{J_1}^\pm,\ldots,y_{J_q}^\pm] \big/  \psi^{-1}\big((I_{G,G'} + J)/I_{G,G'} \big)\\
	&=
	\C[x_{I_1}^\pm,\ldots,x_{I_p}^\pm,y_{J_1}^\pm,\ldots,y_{J_q}^\pm] / \psi^{-1}(\overline{J}).
	\numberthis \label{eq:iso}
	\end{align*}

Note that for $1 \leq m \leq n$ we have $\bar{x}_m = \lambda \, \bar{x}_{\min I(m)}$ and $\bar{y}_m = \mu \, \bar{y}_{\min J(m)}$ for nonzero scalars $\lambda$ and $\mu$. Thus we have, for $i\in[k]$,
$$
\psi^{-1}(\nu_{i1}\bar{x}_{1}\bar{y}_{1}+\cdots+\nu_{in}\bar{x}_{n}\bar{y}_{n})=\eta_{i1}x_{I(1)}y_{J(1)}+\cdots+\eta_{in}x_{I(n)}y_{J(n)}
$$
for some nonzero constants $\eta_{ij}$ that are generic if the $\nu_{ij}s$ are sufficiently generic. 
We conclude that $\psi^{-1}(\overline{J})$ is generated by $k$ generic equations whose Newton polytope is equal to $R_{\Gamma}$, together with $x_{I(1)} - 1$ and $y_{J(1)} - 1$. Recall that $k=2n-2-r-s$ where $r$ and $s$ are the numbers of edges of $G$ and $G'$ respectively. Since these graphs are acyclic, $r=n-p$ and $s=n-q$, so $k=p+q-2$ equals the ambient dimension of the Newton polytope $R_{\Gamma}$.

By the BKK Theorem, the  left-hand side of \eqref{eq:iso} is a variety consisting of $(2n-2)! \, \mv(G,G')$ points and the right hand side is a variety consisting of $(p+q-2)! \,  \vol_{p+q-2}(R_\Gamma)$ points. Therefore these two numbers are equal to each other.

\bigskip

\noindent
(2. = 3. = 4.)
In the case that $\Gamma$ is connected, Postnikov 
\cite[Theorem 12.2]{Postnikov} showed that the $(p+q-2)$-dimensional volume of the edge polytope $R_\Gamma$ times $(p+q-2)!$ equals $i(P^-_\Gamma)$ and $i(Q^-_\Gamma)$.

\smallskip

Now assume $\Gamma$ is disconnected. Say $\Gamma = \Gamma_1 \cup \Gamma_2$ where $\Gamma_1$ and $\Gamma_2$ have vertex sets $\I_1 \cup \J_1$ and $\I_2 \cup \J_2$ for $\I_1 \cup \I_2 = \I$ and $\J_1 \cup \J_2 = \J$. 

First observe that $R_\Gamma$ is the convex hull of the union of the edge polytopes $R_{\Gamma_1}$ and $R_{\Gamma_2}$. But these two polytopes have dimension at most $|\I_1| + |\J_1|-2$ and $|\I_2| + |\J_2|-2$, respectively, so $R_\Gamma$ has dimension at most $|\I| + |\J|-4 = p+q-4$, and hence its $(p+q-2)$-dimensional volume is $0$.

On the other hand, 
by the definition of $P_{\Gamma}$,
$$
	P_{\Gamma} \subset \Big\{x\in \R^{q}\, : \, \sum_{j\in \J_1} x_j = |\I_1|, \sum_{j\in \J_2} x_j = |\I_2|	\Big\}.
$$
so $\dim(P_{\Gamma}) < q-1 = \dim(\Delta_{[q]})$. Therefore $P^-_{\Gamma} = P_{\Gamma} - \Delta_{[q]} = \varnothing$ and $i(P^-_\Gamma) = 0$. The proof that $i(Q^{-}_{\Gamma})=0$ is analogous.

\bigskip

We have shown that (1.) = (2.) = (3.) = (4.). We have also shown that if $\Gamma$ is disconnected this number is $0$. On the other hand, if $\Gamma$ is connected, then $\dim R_\Gamma = p+q-2$ by \cite[Lemma 5.4]{Herzog-Hibi-Ohsugi}, so its volume is nonzero.

\bigskip

\noindent
(2. = 5. if $\Gamma$ is connected.) The $(p+q-2)$-dimensional volume of $R_\Gamma$ equals the degree of $V(I_\Gamma)$ by \cite[Theorem 4.16]{Sturmfels}.
\end{proof}

\subsection{An illustrative example}

Let us verify that $(2n-2)! \, \mv(G,G')=i(P^-_{\I,\J})=i(Q^-_{\I,\J})$ for the graphs in \Cref{ex: graph}. This case is small enough that we can do it by hand, and it illustrates the need for the machinery of Section \ref{subsec:mixed}.
Here $n=6$, so $10! \, \mv(G,G')$ is the number of solutions to the system $\E(G,G')$
	$$
	\E(G,G'): \quad
	\begin{cases}
	x_{1} = \lambda_{12} \, x_{2}, \qquad\qquad 
	y_{1} = \mu_{14}\,y_{4}, \qquad \qquad
	\nu_{11}\, x_{1}y_{1}+\cdots+\nu_{16}\, x_{6}y_{6}=0, \\
	x_{3} = \lambda_{34}\,x_{4}, \qquad\qquad 
	y_{2} = \mu_{23}\,y_{3}, \qquad \qquad
	\nu_{21}\,x_{1}y_{1}+\cdots+\nu_{26}\,x_{6}y_{6}=0,  \\
	x_{5} = \lambda_{56}\,x_{6}, \qquad\qquad  
	y_{4} = \mu_{45}\,y_{5}, \qquad \qquad  
	\nu_{31}\,x_{1}y_{1}+\cdots+\nu_{36}\,x_{6}y_{6}=0,\\
	\hspace{3.6cm} y_{5} = \mu_{56}\,y_{6},\\
	x_1=1, \hspace{2.33cm}  y_1=1. \\
	\end{cases}
	$$
for a generic choice of coefficients. The first two columns of $\E(G,G')$ may be rewritten as 
\begin{eqnarray*}
&& 1 = X_{12} := x_1 = \lambda_{12} \, x_2, \quad X_{34}: = x_3 = \lambda_{34} \, x_4, \qquad X_{56}: = x_5 = \lambda_{56}x_6\\
&& 1 = Y_{1456} := y_1 = \mu_{14} \,y_4 = \mu_{14}\mu_{45} \, y_5, = \mu_{14}\mu_{45}\mu_{56} \, y_6, \quad Y_{23} := y_2 = \mu_{23} \, y_3, \quad Y_6:=y_6,
\end{eqnarray*}
so $\E(G,G')$ reduces to the following system of equations:
	$$
	\cH_{\I,\J}: \quad 
	\begin{cases}
	\eta_{11} \,X_{12}Y_{1456}+\eta_{12} \,X_{12}Y_{23}+\eta_{13} \,X_{34}Y_{23}+\eta_{14} \,X_{34}Y_{1456}+\eta_{15} \,X_{56}Y_{1456}+\eta_{16} \,X_{56}Y_{1456}=0,\\
	\eta_{21} \,X_{12}Y_{1456}+\eta_{22} \,X_{12}Y_{23}+\eta_{23} \,X_{34}Y_{23}+\eta_{24} \,X_{34}Y_{1456}+\eta_{25} \,X_{56}Y_{1456}+\eta_{26} \,X_{56}Y_{1456}=0,\\
	\eta_{31} \,X_{12}Y_{1456}+\eta_{32} \,X_{12}Y_{23}+\eta_{33} \,X_{34}Y_{23}+\eta_{34} \,X_{34}Y_{1456}+\eta_{35} \,X_{56}Y_{1456}+\eta_{36} \,X_{56}Y_{1456}=0,\\
	X_{12} = Y_{145} = 1,
	\end{cases}
	$$
where each coefficient $\eta_{ij}$ is obtained by multiplying $\nu_{ij}$ with the $\lambda$s (or their inverses) along a path from $i$ to $\min I(i)$ in $G$ and the $\mu$s (or their inverses) along a path from $j$ to $\min J(j)$ in $G'$. These coefficients are generic if the original $\lambda$s, $\mu$s, and $\nu$s are sufficiently generic.
This reduction of $\E(G,G')$ to $\cH_{\I,\J}$ is central to the proof of (1. $\Longleftrightarrow$ 2.) in Proposition~\ref{key lemma}.

If we write
\[
z_1 = X_{12}Y_{1456} = 1, \quad 
z_2 = X_{12}Y_{23}, \quad
z_3 = X_{34}Y_{23}, \quad
z_4 = X_{34}Y_{1456}, \quad
z_5 = X_{56}Y_{1456}, \quad
z_6 = X_{56}Y_{1456}
\]
we get a generic system of 3 equations in 5 unknowns $z_2, \ldots, z_6$. Solving this system, we obtain an expression for each of $z_2, \ldots, z_4$ as a linear function of $z_5$ and $z_6$. 
Now, the $z_i$s satisfy two equations 
\[
z_1z_3=z_2z_4, \qquad z_5=z_6
\]
coming from the two even cycles formed by edges $1,2,3,4$ and edges $5,6$ in $\Gamma$, respectively. Thus $z_2, z_3$ and $z_4$ can be expressed linearly in terms of $z_6$, and the equation $z_1z_3=z_2z_4$ turns into a quadratic equation satisfied by $z_6$, which has $2$ solutions. Reversing the steps of our computation, we obtain $2$ solutions to the original system. We conclude that $10! \, \mv(G,G') = 2$.
This agrees with the fact that 
$P^{-}_{\Gamma}=\Delta_{ab} \subset\R^{abc}$ and 
$Q^{-}_{\Gamma}=\Delta_{AB}+\Delta_A	\subset\R^{AB}$
each contain two lattice points.

The procedure above works for general acyclic graphs $G$ and $G'$ such that $\Gamma$ is connected; the relations among the $z_i$s are precisely given by the toric ideal $I_\Gamma$. The last step of the computation cannot be done by hand in general; instead, one needs to know the degree of $I_\Gamma$. We find it by computing the number of lattice points in $P^{-}_{\I,\J}$ or in $Q^{-}_{\I,\J}$ -- whichever is easier.

\subsection{The volume}

We are finally ready to prove Theorem \ref{thm:volume}.

\begin{reptheorem}{thm:volume}
The volume of the harmonic polytope is
	\begin{eqnarray*}
	\vol(H_{n,n}) &=& \sum_{\Gamma} \frac{i(P_\Gamma^-)}{(v(\Gamma)-2)!}
	\prod_{v\in V(\Gamma)}\deg(v)^{\deg(v)-2} \\
	&=& \sum_{\Gamma} \frac{\deg(X_\Gamma)}{(v(\Gamma)-2)!}
	\prod_{v\in V(\Gamma)}\deg(v)^{\deg(v)-2}, 
	\end{eqnarray*}
summing over all connected bipartite multigraphs $\Gamma$ on edge set $[n]$. 
Here $i(P_\Gamma^-)$ is the number of lattice points in the trimmed generalized permutahedron $P_\Gamma^-$ of $\Gamma$, $X_\Gamma$ is the projective embedding of the toric variety of $\Gamma$
given by the toric ideal of $\Gamma$, $V(\Gamma)$ is the set of vertices of $\Gamma$, and $v(\Gamma) := |V(\Gamma)|$. 
\end{reptheorem}

\begin{proof}
We use the notation of Sections \ref{subsec:volume} and \ref{subsec:mixed}.
By \eqref{eq: volume expansion}  and \Cref{lem: forests are needed} we have that 
	\begin{align*}
	\vol(H_{n,n})
	&=
	\sum_{\substack{G, G'\\ \text{acyclic}}} \binom{2n-2}{G,G'; D_n} \mv(G,G')\\
	&=
	\sum_{\substack{G, G'\\ \text{acyclic}}}  \frac{(2n-2)!}{k!} \mv(G,G'),
\end{align*}
since the graphs $G$ and $G'$ have no repeated edges. Write $\Gamma$ for the bipartite graph associated to $G$ and $G'$, abusing notation. 
Applying \Cref{key lemma}, and noting that $v(\Gamma)-2 = p+q-2 = k$, it follows that
	\begin{align*}
	\vol(H_{n,n})
	&=
	\sum_{\substack{G, G'\\ \text{acyclic}}}  \vol_{v(\Gamma)-2}(R_\Gamma) \\
	&=
	\sum_{\substack{(G,G')\ \text{acyclic}\\ \text{s.t. } \Gamma \text{ connected}}}
	 \frac{i(P^{-}_{\Gamma})}{(v(\Gamma)-2)!}.
\end{align*}
Since the summands on the right only depend on the partitions $\I,\J$ associated to $G,G'$ we can combine the terms in as follows:
	\begin{align*}
	\vol(H_{n,n})
	&=
	\sum_{\Gamma \text{ connected}}
	 \frac{i(P^{-}_{\Gamma})}{(v(\Gamma)-2)!}
	  \cdot
	  (\text{number of acyclic graphs $G,G'$ whose bipartite graph is $\Gamma$}).
\end{align*}
Now, the edges of the bipartite graph $\Gamma$ determine the labels $\I = \{I_1 , \cdots , I_p\}$ and $\J = \{J_1 , \cdots , J_q\}$ of the vertices of $\Gamma$; we need these to be the partitions of $[n]$ into the connected components of $G$ and $G'$, respectively. 

We specify an acyclic graph $G$ (resp. $G'$) with components $\I$ (resp. $\J$) by specifying, for each $I\in\I$ (resp. $J\in\J$), a tree with $|I|$ (resp. $|J|$) vertices. There are $|I|^{|I|-2}$ (resp. $|J|^{|J|-2}$) such trees. By definition of $\Gamma_{\I,\J}$, $\deg(I)=|I|$ for any $I\in\I$ (and similarly for any $J\in\J$). We thus conclude that
	\begin{eqnarray*}
	\vol(H_{n,n}) 
	&=& \sum_{\Gamma \textrm{ connected}} 
\frac{\deg(X_\Gamma)}{(|V(\Gamma)|-2)!}
	\prod_{v\in V(\Gamma)}\deg(v)^{\deg(v)-2}.
	\end{eqnarray*}
as desired. 
\end{proof}

Using Theorem \ref{thm:volume} one can readily compute the volumes of the first few harmonic polytopes:
\[
\vol(H_{1,1}) = 1, \quad
\vol(H_{2,2}) = 3, \quad
\vol(H_{3,3}) = 33, \quad
\vol(H_{4,4}) = 2848/3.
\]

\section{The number of non-zero mixed volumes}

In this section we compute the number of non-zero mixed volumes of the harmonic polytope, in its Minkowski sum decomposition \eqref{eq:Minkowski}. This is the number of summands that contribute to the volume of the harmonic polytope $H_{n,n}$ in \eqref{eq: mixed vol def}. We do so with the help of the \emph{M\"obius algebra} of the partition lattice, which is denoted $\Pi_n$.\footnote{This should not be confused with the permutohedron, which makes no further appearances in the paper.}

If $\pi = \{B_1, \ldots, B_k\}$ is a set partition of $[n]$, we let $\ell(\pi): = k$ be the number of parts of $\pi$, and
\[
t(\pi) := |B_1|^{|B_1|-2} \cdot  \cdots \cdot  |B_k|^{|B_k|-2}.
\]
Let $\Pi_n$ be the lattice of set partitions of $[n]$ ordered by refinement, so $\sigma \leq \tau$ if every block of $\tau$ is a union of blocks of $\sigma$.

\begin{proposition}\label{prop:summands}
The harmonic polytope  $H_{n,n} =\e_{[n]} + \f_{[n]} +  \sum_{i<j} \Delta_{ij} + \sum_{i<j} \Delta_{\bi \bj} + D_n$ has
\begin{eqnarray*}
a_n &:= & \text{number of pairs of forests $(F_1, F_2)$ on $[n]$ such that $F_1 \cup F_2$ is connected} \\
&=& \sum_{\sigma \in \Pi_n} (-1)^{\ell(\sigma)} (\ell(\sigma)-1)! \Big(\sum_{\tau \leq \sigma} t(\tau)\Big)^2
\end{eqnarray*}
non-zero mixed volumes.
\end{proposition}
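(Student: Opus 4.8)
The plan is to prove the two claims separately: first that the number of non-zero mixed volumes equals the number of pairs of forests $(F_1, F_2)$ on $[n]$ with $F_1 \cup F_2$ connected, and then that this count admits the stated M\"obius-algebra formula.

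\textbf{Step 1: Non-zero mixed volumes correspond to pairs of forests with connected union.} By \eqref{eq: mixed vol def} and the Minkowski decomposition \eqref{eq:Minkowski}, the non-zero mixed volumes $\mv(\Delta_{i_1j_1}, \ldots, \Delta_{\bar{i}_s\bar{j}_s}, D_n, \ldots, D_n)$ are indexed (up to permutation of the arguments) by choices of a subset $F_1$ of the edges $\{ij : i<j\}$, a subset $F_2$ of the edges $\{\bar{i}\bar{j} : i<j\}$, and a number $k = 2n-2-|F_1|-|F_2|$ of copies of $D_n$. Here $F_1$ and $F_2$ are genuine graphs (no repeated edges), which we may take to be graphs $G, G'$ on $[n]$. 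By \Cref{lem: forests are needed}, if either $G$ or $G'$ contains a cycle the mixed volume vanishes, so we may assume both are forests; write $\Gamma = \Gamma_{\I,\J}$ for the associated bipartite multigraph, where $\I,\J$ are the partitions into connected components. By \Cref{key lemma}, $\mv(G,G') \neq 0$ if and only if $\Gamma$ is connected. Finally I must check that $\Gamma_{\I,\J}$ is connected precisely when $F_1 \cup F_2$ is connected as a graph on $[n]$: indeed a path in $F_1 \cup F_2$ alternating between $F_1$-edges and $F_2$-edges projects exactly to a walk in $\Gamma$, and conversely, so the two connectivity notions agree. This establishes $a_n = \#\{(F_1,F_2) : F_1 \cup F_2 \text{ connected}\}$.

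\textbf{Step 2: The M\"obius-algebra computation.} To count pairs of forests $(F_1, F_2)$ with connected union, I first count pairs with a \emph{prescribed} union-connectivity pattern. For a set partition $\sigma \in \Pi_n$, let $N(\sigma)$ be the number of pairs of forests $(F_1, F_2)$ such that the connected components of $F_1 \cup F_2$ are exactly the blocks of $\sigma$. Since a forest on a set partitioned into blocks is just a choice of spanning tree on each block, and the number of spanning trees on a $b$-element set is $b^{b-2}$ by Cayley's formula, the number of pairs $(F_1, F_2)$ whose union is contained in (i.e.\ refines or equals) $\sigma$ — meaning each $F_i$ has every component inside a block of $\sigma$ — is
\[
M(\sigma) := \Big(\sum_{\tau \leq \sigma} t(\tau)\Big)^2,
\]
because the number of forests on $[n]$ whose component partition refines $\sigma$ is $\sum_{\tau \le \sigma} t(\tau)$ (sum over the possible finer partitions $\tau$, each contributing $t(\tau) = \prod |B|^{|B|-2}$ spanning trees), and the two forests are chosen independently. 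Now $M(\sigma) = \sum_{\rho \le \sigma} N(\rho)$ where $\rho$ ranges over partitions whose blocks are unions of $\sigma$'s... more precisely the component partition of $F_1 \cup F_2$ is some $\rho$ with $\rho \ge$ the component partition of each $F_i$; organizing by $\rho$ gives $M(\sigma) = \sum_{\rho : \rho \le \sigma}$... Let me be careful: $M(\sigma)$ counts pairs whose union-partition $\rho$ satisfies $\rho \le \sigma$, so $M(\sigma) = \sum_{\rho \le \sigma} N(\rho)$. We want $a_n = N(\widehat{1})$. By M\"obius inversion over $\Pi_n$,
\[
N(\widehat{1}) = \sum_{\sigma \in \Pi_n} \mu(\sigma, \widehat{1}) \, M(\sigma) = \sum_{\sigma \in \Pi_n} (-1)^{\ell(\sigma)-1}(\ell(\sigma)-1)! \, \Big(\sum_{\tau \leq \sigma} t(\tau)\Big)^2,
\]
using the classical formula $\mu_{\Pi_n}(\sigma, \widehat{1}) = (-1)^{\ell(\sigma)-1}(\ell(\sigma)-1)!$. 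This differs from the stated formula only by an overall sign $(-1)^{\ell(\sigma)-1}$ versus $(-1)^{\ell(\sigma)}$; since the claimed answer is a count and hence positive, I will double-check the sign conventions — likely the paper absorbs a sign, or I should present it as $(-1)^{\ell(\sigma)-1}$ and note the stated form in the proposition carries the same value up to the reader's convention for $\mu$. (If instead one works in the M\"obius algebra directly: write the indicator of "connected" as $\sum_\sigma \mu(\hat 1,\sigma)\,[\text{refines }\sigma]$ in the incidence-algebra dual, giving the same result.)

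\textbf{Main obstacle.} The genuinely delicate point is Step 2's bookkeeping: correctly identifying that $\sum_{\tau \le \sigma} t(\tau)$ counts forests whose components refine $\sigma$ (not something off by which partition is summed), and then applying M\"obius inversion in the right direction on $\Pi_n$ so that the prefactor comes out as $\mu(\sigma,\widehat 1)$ rather than its transpose — this is exactly where a sign error like $(-1)^{\ell(\sigma)}$ vs.\ $(-1)^{\ell(\sigma)-1}$ creeps in. Step 1 is conceptually routine given \Cref{lem: forests are needed} and \Cref{key lemma}, the only thing to verify carefully being the equivalence between connectivity of $\Gamma_{\I,\J}$ and connectivity of $F_1 \cup F_2$.
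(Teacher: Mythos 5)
Your proposal is correct, and it follows essentially the same route as the paper: Step 1 is the paper's argument verbatim (Lemma \ref{lem: forests are needed} plus Proposition \ref{key lemma}, together with the translation between connectivity of $F_1\cup F_2$ and connectivity of $\Gamma_{\I,\J}$ via lifting/projecting paths), and your Step 2 — setting $M(\sigma)=\bigl(\sum_{\tau\le\sigma}t(\tau)\bigr)^2=\sum_{\rho\le\sigma}N(\rho)$ and inverting — is the same computation the paper performs in the M\"obius algebra of $\Pi_n$, since extracting the coefficient of $\widehat{1}$ in $T^2$ via the orthogonal idempotents $\delta_\sigma$ is exactly classical M\"obius inversion of $s(\sigma)^2$; neither phrasing buys anything the other doesn't. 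Two small remarks. First, you should add the one-line observation (as the paper does) that no non-zero mixed volume can involve the point summands $\e_{[n]}$ or $\f_{[n]}$, so that the relevant tuples are indeed indexed by pairs $(G,G')$ together with the forced number $k=2n-2-r-s$ of copies of $D_n$. Second, and more importantly, your sign is the correct one: since $[\sigma,\widehat{1}]\cong\Pi_{\ell(\sigma)}$, one has $\mu(\sigma,\widehat{1})=(-1)^{\ell(\sigma)-1}(\ell(\sigma)-1)!$, so
\[
a_n=\sum_{\sigma\in\Pi_n}(-1)^{\ell(\sigma)-1}(\ell(\sigma)-1)!\Big(\sum_{\tau\le\sigma}t(\tau)\Big)^2,
\]
and the exponent $(-1)^{\ell(\sigma)}$ appearing in the statement (and in the last display of the paper's proof) is a sign typo, inconsistent with the M\"obius number the paper itself quotes; a quick check at $n=2$ gives $4-1=3=a_2$ with your sign and $-3$ with the stated one, and similarly $2-12+49=39=a_3$. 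So there is no hidden convention to reconcile — present the formula with $(-1)^{\ell(\sigma)-1}$.
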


\begin{proof}
A non-zero mixed volume cannot involve either of the summands $\e_{[n]}$ or $\f_{[n]}$, since the corresponding equations $\lambda \, x_1 \cdots x_n = 0$ and $\mu \, y_1 \cdots y_n = 0$ have no solutions on the torus for $\lambda$ and $\mu$ generic. Thus we focus on the mixed volumes $\mv(G,G')$.
 
By \Cref{lem: forests are needed} and \Cref{key lemma}, we have that $\mv(G,G')\neq 0$ if and only if $G,G'$ are forests and the associated bipartite graph $\Gamma = \Gamma_{\I,\J}$ is connected. Thus to prove the first statement we will show that $G\cup G'$ is connected if and only if $\Gamma$ is connected.

Suppose that $G\cup G'$ is connected. A path 
	$$
	i_1\to	i_2\to\ldots\to	i_\ell
	$$
in $G\cup G'$ gives rise to a path in $\Gamma$ as follows.
For $j=1,\ldots, \ell$, replace the edge $i_j\to i_{j+1}$ in $G \cup G'$ with the edge $J(i_j)\to I(i_j)= I(i_{j+1})$ if $i_j i_{j+1}\in E(G)$, and with $I(i_j)\to J(i_j)= J(i_{j+1})$ in $\Gamma$ if $i_j i_{j+1}\in E(G')$
Note that the resulting path can easily be modified into a path starting at $I(i_1)$ or $J(i_1)$ by adding or removing the edge $I(i_1)J(i_1)$; a similar modification works for $I(i_\ell)$ or $J(i_\ell)$.
Now, to find a path between any two vertices of $\Gamma$, pick an element of each vertex, construct a path between these elements in $G\cup G'$, and use the procedure above to obtain a path between the desired vertices in $\Gamma$.

Conversely, suppose that $\Gamma$ is connected and consider any two vertices $i, i'$ of $G\cup G'$.
Consider a path
	$$
	P: \qquad I(i_1)\to	J(i_1)=J(i_2)\to	I(i_2)=I(i_3)\to	\ldots\to	J(i_{\ell-1})=J(i_\ell) \qquad \text{ in } \Gamma,
	$$
where $i_1=i$ and $i_\ell=i'$. For each $1 \leq j \leq \ell -1$, we have either $I(i_j) = I(i_{j+1})$ or $J(i_j) = J(i_{j+1})$; since these are connected components of $G$ or $G'$, we can find a path in either $G$ or $G'$ from $i_j$ to $i_{j+1}$. 
We are then able to construct a path in $G\cup G'$ from $i$ to $i'$ by replacing each edge of the path $P$ in $\Gamma$ with a 
path from $i_j$ to $i_{j+1}$ in $G \cup G'$.
This concludes the proof of the first equation.

\smallskip

Now, to choose a pair of forests $(F_1, F_2)$ on $[n]$ such that $F_1 \cup F_2$ is connected, we first choose the set partitions $\pi_1 := \pi(F_1)$ and $\pi_2 = \pi(F_2)$, where $\pi(F)$ denotes the partition of $[n]$ given by the connected components of $F$. 
Notice that $F_1 \cup F_2$ is connected if and only if $\pi_1 \vee \pi_2 = \widehat{1}$ in the partition lattice. Having chosen the partitions $\pi_1$ and $\pi_2$, it simply remains to choose the forests $F_1$ and $F_2$ that give rise to them; there are $t(\pi_1)$ and $t(\pi_2)$ choices for those forests, respectively. It follows that
\[
a_n = \sum_{\substack{\pi_1, \pi_2 \in \Pi_n \\ \pi_1 \vee \pi_2 = \widehat{1}}} t(\pi_1) t(\pi_2).
\]
Now we compute in the \emph{M\"obius algebra} $A(\Pi_n)$ of $\Pi_n$; this is the real vector space with basis $\Pi_n$ equipped with the bilinear multiplication given by the join of the lattice; in symbols,
\[
A(\Pi_n) := \R \, \Pi_n \, \qquad \text{where} \qquad \sigma \cdot \tau := \sigma \vee \tau.
\]
It follows from the definitions that
\begin{equation}
\label{eq:an}
a_n = [\widehat{1}] \, T^2 \qquad \text{ for } \qquad T:= \sum_{\pi \in \Pi_n} t(\pi) \pi,
\end{equation}
where $[\pi] \alpha$ denotes the coefficient of a set partition $\pi \in \Pi_n$ in an element $\alpha \in A(\Pi_n)$, when expressed in the standard basis.

As explained in \cite[Section 3.9]{EC1}, it is useful to define the following elements of the M\"obius algebra $A(\Pi_n)$:
\[
\delta_{\tau} := \sum_{\sigma \geq \tau} \mu(\tau, \sigma) \sigma \qquad \text{ for } \tau \in \Pi_n.
\]
These elements form a basis for $A(\Pi_n)$ because M\"obius inversion tells us that
\[
\tau = \sum_{\sigma \geq \tau} \delta_\sigma, \qquad \text{ for } \tau \in \Pi_n.
\]
Furthermore, they are pairwise orthogonal idempotents:
\[
\delta_\sigma \delta_\tau = 
\begin{cases}
\delta_\sigma & \text{ if } \sigma = \tau, \\
0 & \text{otherwise},
\end{cases}
\]
which makes them very useful for computations in $A(\Pi_n)$. We compute
\begin{eqnarray*}
T &=& \sum_{\tau \in \Pi_n} \Big( t(\tau) \sum_{\sigma \geq \tau} \delta_\sigma \Big)\\
&=& \sum_{\sigma \in \Pi_n} s(\sigma) \,  \delta_\sigma,
\end{eqnarray*}
where
\[
s(\sigma) := \sum_{\tau \leq \sigma} t(\tau) \qquad \text{ for } \sigma \in \Pi_n.
\]
Therefore, using the orthogonal idempotence of the $\delta_\sigma$s, we have
\begin{eqnarray*}
T^2 &=&  \sum_{\sigma \in \Pi_n} s(\sigma)^2 \delta_\sigma \\
&=& \sum_{\sigma \in \Pi_n} \Big( s(\sigma)^2 \sum_{\tau \geq \sigma} \mu(\sigma, \tau) \tau \Big) \\
&=& \sum_{\tau \in \Pi_n} \Big( \sum_{\sigma \leq \tau} \mu(\sigma, \tau) s(\sigma)^2 \Big) \tau
\end{eqnarray*}
It follows from \eqref{eq:an} that
\begin{eqnarray*}
a_n &=& \sum_{\sigma \in \Pi_n} \mu(\sigma, \widehat{1})  \, s(\sigma)^2 \\
&=& \sum_{\sigma \in \Pi_n} (-1)^{\ell(\sigma)} (\ell(\sigma)-1)!  \, s(\sigma)^2,
\end{eqnarray*} 
using the facts that the interval $[\sigma, \widehat{1}]$ in the partition lattice $\Pi_n$ is isomorphic to the smaller partition lattice $\Pi_{\ell(\sigma)}$ -- because the coarsenings of $\sigma$ are obtained by arbitrarily merging blocks of $\sigma$ -- and the M\"obius number of the partition lattice $\Pi_k$ is $\mu_{\Pi_k}(\widehat{0}, \widehat{1}) = (-1)^{k-1} (k-1)!$. 
\end{proof}

Using Proposition \ref{prop:summands}, one easily computes by hand the first values of the sequence:
\[
a_1 = 1, \quad
a_2 = 3, \quad
a_3 = 39, \quad
a_4 = 1242.
\]

\section{Future directions}

\begin{enumerate}
\item
Find other simplicial polytopes with an elegant combinatorial structure that have the harmonic polytope as a Minkowski summand. 
\item
Use 1. to discover and explore other combinatorial models for Lagrangian geometry of matroids. 
Section \ref{sec:matroids} explains that the bipermutahedron is one such polytope, and leads to a theory of Lagrangian combinatorics of matroids, which is the subject of \cite{ADH2}. Other answers to 1. will lead to other such theories, and give rise to interesting matroid-theoretic directions.
\item
Study the Ehrhart polynomial and $h^*$-polynomial of $H_{n,n}$.
\item
Find a triangulation or subdivision of $H_{n,n}$ that will shed light on 3. In particular, $H_{n,n}$ is a Minkowski sum of one simplex and $n(n-1)$ segments, and its mixed subdivisions are likely to have a rich combinatorial structure.
\end{enumerate}

\section{Acknowledgments}

The first author would like to thank Graham Denham and June Huh for the very rewarding collaboration that led to the construction of the harmonic polytope. We would like to thank the anonymous referees for their careful reading of the work and their valuable suggestions to improve the exposition.

\bibliographystyle{habbrv}
\bibliography{ref}

\end{document}